  \newtheorem{lemma}{Lemma}[section]
  \newtheorem{theorem}{Theorem}[section]
  \newtheorem{corollary}[lemma]{Corollary}
  \newcommand{\sgn}{\text{sgn}}
\begin{document}
%
%
%
%
%
%
\title{Convergence of the stochastic Euler scheme\\ for locally Lipschitz coefficients}%
%
%

\author{Martin Hutzenthaler$^1$
and Arnulf Jentzen$^2$
\smallskip
\\
\small{$^1$LMU Biozentrum,
Department Biologie II,
University of Munich (LMU),} 
\\
\small{D-82152~Planegg-Martinsried, Germany,
e-mail: hutzenthaler$\,$(at)$\,$bio.lmu.de}
\\
\small{Phone:
+49-89-2180-74179,
Fax: 
+49-89-2180-74104}
\smallskip
\\
\small{$^2$Program in Applied 
and Computational Mathematics, 
Princeton University,}
\\
\small{Princeton, NJ 08544-1000, 
USA, e-mail: ajentzen$\,$(at)$\,$math.princeton.edu}
\\
\small{Phone:
+1-609-258-2654,
Fax: 
+1-609-258-1735}
}


\maketitle
\def\thefootnote{\fnsymbol{footnote}}
\def\@makefnmark{\hbox to\z@{$\m@th^{\@thefnmark}$\hss}}
\footnotesize\rm\noindent
%
%
\footnote[0]{{\it Communicated by
Peter Kloeden.}
{\it AMS\/\ subject
classifications {\rm Primary 65C05;
Secondary 60H35, 65C30}}.}\footnote[0]{{\it Keywords:
Euler scheme, stochastic differential equations, Monte Carlo Euler method, local Lipschitz condition.}}
\footnote[0]{{\it Corresponding 
author: Arnulf Jentzen.}}
\normalsize\rm
\begin{abstract}
  Stochastic differential equations are 
  often simulated with the
  Monte Carlo Euler  method.
  Convergence of this method is well understood 
  in the case of globally
  Lipschitz continuous coefficients
  of the stochastic differential equation.
  The important case of
  superlinearly growing coefficients, however,
  has remained an open question.
  The main difficulty is that numerically 
  weak convergence fails
  to hold in many cases of superlinearly growing 
  coefficients.
%
%
%
  In this paper we overcome this 
  difficulty and
  establish convergence of the Monte 
  Carlo Euler method
  for a large class of one-dimensional 
  stochastic differential equations
  whose drift functions have at most 
  polynomial growth.
\end{abstract}



%
\section{Introduction}%
\label{sec:Introduction}
Many applications require the numerical 
approximation of moments or 
expectations of other functionals
of the solution of a stochastic differential equation (SDE) whose coefficients
are superlinearly growing. 
Moments are often approximated 
by discretizing time using the stochastic
Euler scheme (see 
e.g.\ \cite{KlPl},
\cite{Mil95},
\cite{Y02}) 
(a.k.a.\ Euler-Maruyama scheme) and by approximating expectations
with the Mon\-te Car\-lo method.
This Monte Carlo Euler method has been shown to converge 
in the case of globally Lipschitz continuous 
coefficients of the SDE
(see e.g.~Section~14.1 in \cite{KlPl} and 
Section 12 in~\cite{Mil95}).
The important case of
superlinearly growing coefficients, however,
has remained an open problem.
The main difficulty is that numerically 
weak convergence fails
to hold in many cases of 
superlinearly growing coefficients; see~\cite{HJK2011}.
In this paper we overcome this difficulty and establish
convergence of the Monte Carlo Euler method
for a large class of 
one-dimensional SDEs with
at most polynomial growing drift functions and
with globally Lipschitz continuous diffusion functions;
see 
Section~\ref{sec:Main result} 
for the exact statement.

For clarity of exposition,
we concentrate in this 
introductory section on the
following prominent example.
Let $ T \in (0,\infty) $ 
be fixed and let $(X_t)_{t \in [0,T]}$ 
be the unique
strong solution
of the
one-dimensional SDE
\begin{equation}\label{eq:X_intro}
  d X_t =
  - X_t^3\,dt + \bar{\sigma} \, dW_t,
  \qquad
  X_0 = x_0
\end{equation}
for all $ t \in [0,T] $,
where $(W_t)_{t \in [0,T] }$ is a one-dimensional
standard 
Brownian motion
with continuous sample paths
and where $ \bar{\sigma} \in [0,\infty) $ and
$ x_0 \in \mathbb{R}$ are given constants.
Our goal is then to solve the cubature
approximation problem of the 
SDE~\eqref{eq:X_intro}. 
More formally, we want to compute 
moments and, more generally, the deterministic
real number
\begin{equation}  \label{eq:functional}
  \mathbb{E}\Bigl[f\bigl( X_T \bigr)
              \Bigr]
\end{equation}
for a given smooth
function $ f \colon \mathbb{R} 
\rightarrow \mathbb{R} $
whose derivatives have at most polynomial growth.

A frequently used scheme for solving this problem is 
the Monte Carlo Euler method.
In this method, time is discretized through 
the stochastic Euler scheme
and expectations are approximated 
by the Monte Carlo method.
More formally, the Euler approximation
$(Y_n^N)_{ n \in \left\{ 0,1,\ldots,N \right\} }$ of 
the solution
$\left( X_t \right)_{t \in [0,T]}$
of the SDE \eqref{eq:X_intro}
is defined recursively through
$ Y^N_0= x_0 $ and
\begin{equation}\label{eq:Euler_intro}
  Y^N_{n+1}
  =
  Y^N_n
  -
  \frac{T}{N}
  \left( Y^N_n \right)^3
  +
  \bar{\sigma} \cdot \left(
   W_{ \frac{(n+1) T}{N} }
   -
   W_{ \frac{ n T }{N} }
  \right)
\end{equation}
for every $n \in \left\{ 0,1, \ldots,N-1 \right\} $
and every $ N \in \mathbb{N}:=\{1,2,\ldots\}$.
Moreover, let $ Y_n^{N,m} $, 
$ n \in \left\{ 0,1, \ldots,N \right\}$, 
$ N \in \mathbb{N}$,
for $ m \in \mathbb{N} $
be independent copies of the Euler approximation
defined in~\eqref{eq:Euler_intro}.
The Monte Carlo Euler
approximation of~\eqref{eq:functional} 
with $N \in \mathbb{N}$ time steps and 
$M \in \mathbb{N}$ Monte Carlo runs
is then
the random real number
\begin{equation}  \label{eq:MCE_intro}
  \frac{1}{M}\left( \sum_{m=1}^{M} 
  f\bigl( Y_N^{N,m} \bigr)
  \right).
\end{equation}
In order to balance the error due
to the Euler method
and the error due to the Monte Carlo method, it turns
out to be optimal to have $M$ increasing at the
order of $N^2$; see~\cite{DG95}.
We say that the Monte Carlo Euler method 
converges
if
\begin{equation}  
\label{eq:MCC_intro}
  \lim_{N\to\infty}\left|
    \mathbb{E}\Bigl[f\bigl( X_T \bigr)
              \Bigr]
    -
    \frac{1}{N^2}\sum_{m=1}^{N^2}
       f \bigl( Y_N^{N,m} \bigr) 
  \right| = 0
\end{equation}
holds 
almost surely
for every smooth 
function
$ f\colon \mathbb{R} 
\rightarrow \mathbb{R} $
whose derivatives have 
at most polynomial growth
(see also Appendix~A.1 
in \cite{G04}).

In the literature, convergence of the Monte Carlo 
Euler method
is usually established by estimating 
the bias
and by estimating the statistical 
error (see e.g.\ Section~3.2 in \cite{Ta95}).
More formally, the triangle inequality yields
\begin{equation}  \begin{split}  \label{eq:error_error}
\underbrace{
  \Bigl| \mathbb{E}\Bigl[f\bigl( X_T \bigr)
              \Bigr]
         -\frac{1}{N^2}\sum_{m=1}^{N^2}
              f \bigl( Y_N^{N,m} \bigr)
  \Bigr|
}_{
\substack{
\text{approximation error of the}
\\
\text{Monte Carlo 
Euler method}
}
}
  \leq
  \underbrace{
    \Bigl|
        \mathbb{E}\Bigl[f\bigl( X_T \bigr)
                  \Bigr]
       -\mathbb{E}\Bigl[f\bigl( Y_N^N \bigr)
              \Bigr]
  \Bigr|
  }_{
    \substack{
      \text{absolute value}
    \\
      \text{of the bias }
    }
  }
  +
\underbrace{
  \Bigl| \mathbb{E}\Bigl[f\bigl( Y_N^N \bigr)
              \Bigr]
         -\frac{1}{N^2}\sum_{m=1}^{N^2}
              f \bigl( Y_N^{N,m} \bigr)
  \Bigr|
}_{\text{statistical error}}
\end{split}     \end{equation}
for every $N\in\mathbb{N}$ and
every smooth function
$ f\colon \mathbb{R} \rightarrow \mathbb{R} $
whose derivatives have 
at most polynomial growth.
The first summand  on the right-hand side of~\eqref{eq:error_error}
is the absolute value of the
bias due to 
approximating the exact
solution with Euler's method.
The second summand on the right-hand side of~\eqref{eq:error_error}
is the statistical error which is due to approximating
an expectation with the arithmetic average over independent copies.
The bias is usually
the more difficult part to estimate.
This is why the concept of numerically weak convergence,
which concentrates on that error part,
has been studied intensively in the literature
(see for 
instance 
\cite{Mil78}, 
\cite{KPS94}, 
\cite{Mil95}, 
\cite{Ta95},
\cite{bt96}, 
\cite{H01}, 
\cite{K02}, 
\cite{R03}
or Part VI in \cite{KlPl}).
To give a definition, we say that 
the stochastic Euler scheme converges 
in the numerically weak sense 
(not to be confused with stochastic weak
convergence)
if the bias of the Monte
Carlo Euler method converges to zero,
i.e., if
\begin{equation}    \label{eq:weak_conv_intro}
 \lim_{N\to\infty}
  \Bigl|
        \mathbb{E}\Bigl[f\bigl( X_T \bigr)
                  \Bigr]
       -\mathbb{E}\Bigl[f\bigl( Y_N^N \bigr)
              \Bigr]
  \Bigr|
  =0
\end{equation}
holds for 
every smooth function
$ f\colon \mathbb{R} \rightarrow \mathbb{R} $
whose derivatives have at most polynomial growth.
If the coefficients of the SDE are 
globally Lipschitz continuous,
then numerically weak convergence 
of Euler's method
and convergence of the Monte
Carlo Euler method is 
well-established;
see e.g.\ Theorem 14.1.5 in \cite{KlPl}
and Section 12 in \cite{Mil95}.

The case of superlinearly growing coefficients is more subtle.
The main difficulty in that case
is that numerically weak 
convergence 
usually fails to hold;
see \cite{HJK2011} for a large class of examples.
In particular, the sequence
$\mathbb{E}\bigl[(Y_N^N)^2\bigr]$, $N \in \mathbb{N}$, 
of second moments of the Euler 
approximations \eqref{eq:Euler_intro}
diverges to infinity if $\bar{\sigma}>0$
although the second moment
$\mathbb{E}\bigl[ (X_T)^2\bigr]$ of the exact
solution of the SDE \eqref{eq:X_intro} 
is finite and, hence, we have
\begin{equation}    
\label{eq:weak_expl}
 \lim_{N\to\infty}
  \Bigl|
        \mathbb{E}\Bigl[ ( X_T )^2
                  \Bigr]
       -\mathbb{E}\Bigl[ ( Y_N^N )^2
              \Bigr]
  \Bigr|
  = \infty 
\end{equation}
instead of \eqref{eq:weak_conv_intro}.
The absolute value of the
bias thus diverges to 
infinity in case of SDEs 
with superlinearly growing
coefficients. 
This in turn implies
divergence of the Monte 
Carlo Euler method
in the mean square sense, i.e.,
\begin{equation}
\label{eq:divMCE}
  \underbrace{
  \mathbb{E}\!\left[\bigg|
    \mathbb{E}\Bigl[ ( X_T )^2 
              \Bigr]
    -
    \frac{1}{N^2}\sum_{m=1}^{N^2}
       \big( Y_N^{N,m} \big)^{ 2 } \,
  \bigg|^2 \right] 
  }_{ 
    \substack{
      \text{mean square error of the}
      \\
      \text{Monte Carlo Euler method}
    }
  }
  \! =
  \underbrace{
  \Bigl|
        \mathbb{E}\Bigl[ ( X_T )^2
                  \Bigr]
       -\mathbb{E}\Bigl[ ( Y_N^N )^2
              \Bigr]
    \Bigr|^2
  }_{
    \text{squared bias } \to\infty
  }
  +
  \underbrace{
  \text{Var}\!\left( 
    \frac{1}{N^2}\sum_{m=1}^{N^2}
       \big( Y_N^{N,m} \big)^{ 2 } \,
  \right)
  }_{
    \substack{ 
    \text{variance of the Monte}
    \\
    \text{Carlo Euler method}
    }
  }
  \rightarrow \infty
\end{equation}
as $ N \rightarrow
\infty $.
Clearly, the 
mean square
divergence~\eqref{eq:divMCE}
does not exclude the
almost sure convergence~\eqref{eq:MCC_intro}.
Indeed, the main result
of this article proves 
the 
convergence~\eqref{eq:MCC_intro}
of the Monte Carlo Euler
method.
For proving this result,
we first need to understand
why Euler's method does not converge
in the sense of numerically weak convergence.
In the deterministic case,
that is,
\eqref{eq:X_intro} and \eqref{eq:Euler_intro} with $\bar{\sigma}=0$,
the Euler approximation diverges 
if the starting point is sufficiently
large.
This divergence has been estimated in~\cite{HJK2011} and turns out
to be at least double-exponentially fast.
Now in the presence of noise ($\bar{\sigma}>0$),
the Brownian motion has an
exponentially small chance to push the Euler approximation outside of $[-N,N]$.
On this event, the Euler approximation grows
at least double-exponentially fast due to the deterministic dynamics.
Consequently, as being 
double-exponentially large over-compensates that
the event has an 
exponentially small probability, the $L^2$-norm of the
Euler approximation diverges to infinity
and, hence, numerically weak convergence fails to hold.

Now we indicate for example~\eqref{eq:X_intro} with $x_0=0$ and $\bar{\sigma}=1$
why the Monte Carlo Euler method converges
although the stochastic Euler scheme
fails to converge
in the sense of numerically weak convergence.
Consider the event
$\Omega_N:=\{ \sup_{0 \leq t \leq T}|W_t| 
\leq \sqrt{ N / (2 T) } \}$ and note 
that the probability of $
( \Omega_N )^c$ is exponentially small
in $N \in \mathbb{N}$.
The key step in our proof is to show that the Euler approximation
does not diverge on $\Omega_N$
as $N \in \mathbb{N}$ goes to
infinity.
More precisely, one can show that
the Euler approximations \eqref{eq:Euler_intro} 
are uniformly dominated on $ \Omega_N $ by twice the
supremum of the Brownian motion, i.e.,
\begin{equation}      \label{eq:domination_intro}
  \sup_{N\in\mathbb{N}}
  \bigg( 
    \mathbbm{1}_{ \Omega_N }
    \left| Y_N^N \right| 
  \bigg)
  \leq 
  2\left( \sup_{0 \leq t \leq T}|W_t|
  \right)
\end{equation}
holds.
Consequently, the restricted absolute moments
are uniformly bounded
\begin{equation}   \label{eq:uniform_bound_intro}
  \sup_{N\in\mathbb{N}} \mathbb{E}\biggl[
                        \mathbbm{1}_{\Omega_N}
                        \bigl|  Y_N^N  \bigr|^p
            \biggr]
  \leq
  2^p \cdot 
  \mathbb{E}\biggl[\Bigl(\sup_{0 \leq t
  \leq T}|W_t|\Bigr)^p \, \biggr]
  <\infty  
\end{equation}
for all $ p \in [1,\infty) $.
This estimate complements the divergence
\begin{equation}
  \lim_{N\to\infty}\mathbb{E}
    \Bigl[
      \mathbbm{1}_{
        ( \Omega_{N} )^c
      }
      \bigl| Y_N^N \bigr|^p 
    \Bigr]
  = \infty 
\end{equation}
for all $p\in[1,\infty)$,
which has been established in~\cite{HJK2011}.
Now once the restricted absolute moments
are uniformly bounded,
an adaptation of the arguments of the globally Lipschitz case
leads to the modified numerically weak convergence
\begin{equation} \label{eq:modified_weak_convergence_intro}
  \lim_{N\to\infty}\mathbb{E}\Bigl[ 
    \mathbbm{1}_{\Omega_N}
    f\bigl(Y_N^N\bigr) 
 \Bigr]
  =\mathbb{E}\Bigl[f(X_T)\Bigr]
\end{equation}
for every smooth function
$ f \colon 
\mathbb{R} \rightarrow \mathbb{R} $ whose derivatives
have at most polynomial growth,
see Lemma~\ref{lemm_ew}.
By substituting 
this into an inequality 
analogous to~\eqref{eq:error_error}
and by using the
exponential decay of 
the probability of $(\Omega_N)^c$
in $ N \in \mathbb{N} $,
one can establish convergence
of the Monte Carlo Euler method.
Note that a domination as strong as~\eqref{eq:domination_intro}
holds for more general non-increasing drift functions 
if the diffusion function is identically equal to $1$.
For more general drift and diffusion functions, however, both $\Omega_N$
and the dominating process are more complicated in that they depend on
the Euler approximation.
Nevertheless, the dominating process can be shown to have uniformly bounded
absolute moments; see Section~\ref{sec:Proof_thm_1} for the details.

Our main result, Theorem~\ref{thm:main_result} 
below, establishes convergence of the
Monte Carlo Euler method for SDEs
with globally one-sided Lipschitz continuous
drift functions and with globally
Lipschitz continuous diffusion functions.
Moreover, the coefficients of the SDE 
are assumed to
have continuous fourth derivatives with
at most polynomial growth,
see Section \ref{sec:Main result}
for the exact statement.
The order of convergence turns out to be 
as in the globally Lipschitz case.
In that case,
the stochastic Euler scheme converges
in the sense of
numerically weak convergence 
with order $1$.
The Monte Carlo simulation of 
$\mathbb{E}\bigl[f\bigl(Y_N^{N}\bigr)\bigr]$
with $M$ independent Euler approximations
has convergence order $\frac{1}{2}-$.
For a real number $ r > 0 $, we write
$ r- $ for the convergence order if
the convergence order is better than
$ r - \varepsilon $ for every
arbitrarily small $ \varepsilon \in (0,r) $.
We therefore choose $ M = N^2 $
in order to balance the error
arising from Euler's approximation
and the error arising from the Monte Carlo approximation.
Both error terms are then bounded by a random multiple
of 
$ 
  N^{ (\varepsilon - 1) } 
$
with
$ \varepsilon \in (0,1) $.
Since $ O\!\left( M \cdot N \right) 
= O\!\left( N^3 \right) $ function
evaluations, arithmetical operations
and random variables are needed to
compute the Monte Carlo Euler approximation
\eqref{eq:MCE_intro}, the Monte Carlo Euler
method converges with order
$ \frac{1}{3}- $ with respect to
the computational effort in the case of global
Lipschitz coefficients of the SDE
(see \cite{DG95}).
Theorem~\ref{thm:main_result} shows
that $ \frac{1}{3}-$ is also
the convergence order in the case
of superlinearly growing coefficients
of the SDE.
Simulations support this result,
see Section~\ref{sec:Simulations}.

Let us reconsider the standard splitting~\eqref{eq:error_error}
of the approximation error 
into bias and statistical error.
Theorem 2.1 of~\cite{HJK2011} implies 
that the absolute value
of the bias diverges to infinity
as $N\to\infty$.
This together with our 
Theorem~2.1 below 
yields that also the statistical
error diverges to infinity. 
More formally, we see that
\begin{equation}  \begin{split}
  \underbrace{
  \Bigl| \mathbb{E}\Bigl[f\bigl( X_T \bigr)
              \Bigr]
         -\frac{1}{N^2}\sum_{m=1}^{N^2}
              f \bigl( Y_N^{N,m} \bigr)
  \Bigr|
  }_{
\substack{
\text{approximation error of the}
\\
\text{Monte Carlo 
Euler method}
}
  \; \to 0
}
  \leq
  \underbrace{
  \Bigl|
        \mathbb{E}\Bigl[f\bigl( X_T \bigr)
                  \Bigr]
       -\mathbb{E}\Bigl[f\bigl( Y_N^N \bigr)
              \Bigr]
  \Bigr|
  }_{
    \substack{
      \text{absolute value}
    \\
      \text{of the bias }
    }
    \;\to\infty
  }
  +
  \underbrace{
  \Bigl| \mathbb{E}\Bigl[f\bigl( Y_N^N \bigr)
              \Bigr]
         -\frac{1}{N^2}\sum_{m=1}^{N^2}
              f \bigl( Y_N^{N,m} \bigr)
  \Bigr|
  }_{\text{statistical error }\to\infty}
\end{split}     \end{equation}
$ \mathbb{P} $-a.s.\ as $ N \to \infty $
for every smooth function
$ f \colon \mathbb{R} \rightarrow
\mathbb{R} $ with at most polynomially growing derivatives and with
$ 
  f(x) \geq 
  c
  \left| x \right|^{ c } - 1/c 
$
for all $ x \in \mathbb{R} $
and some $ c \in (0,\infty) $.
This emphasizes that the standard splitting 
of the approximation error of the 
Monte Carlo Euler method into
bias and statistical error is not 
appropriate in case of SDEs with
superlinearly growing coefficients.

%
\section{Main result}%
\label{sec:Main result}
 We establish convergence
 of the Monte Carlo Euler method
 for more general one-dimensional diffusions than
 our introductory example~\eqref{eq:X_intro}.
 More precisely, we pose 
 the following assumptions on the coefficients.
 The drift function is assumed to be globally one-sided Lipschitz continuous
 and the diffusion function is assumed to be globally Lipschitz continuous.
 Additionally, both the drift function and the 
 diffusion function
 are assumed to
 have a continuous fourth derivative
 with at most polynomial growth.
 
We introduce further notation for the formulation 
of our main result.
Fix $ T \in (0,\infty) $ and let
$ \left( \Omega, \mathcal{F}, \mathbb{P} \right) $
be a probability space with a normal filtration 
$ \left( \mathcal{F}_t \right)_{t \in [0, T] } $. Let
$ \xi^{(m)} \colon \Omega \rightarrow \mathbb{R} $, 
$ m \in \mathbb{N}$, be a sequence of independent,
identically distributed 
$ \mathcal{F}_0 $/$\mathcal{B}(\mathbb{R})$-measurable 
mappings with
$ 
\mathbb{E}\!\left[ 
  | \xi^{(1)} |^p 
\right] < \infty $
for every $ p \in [ 1, \infty )$ and let 
$ W^{(m)} \colon [0,T] \times \Omega \rightarrow \mathbb{R} $,
$ m \in \mathbb{N} $,
be a sequence of independent scalar standard 
$ \left( \mathcal{F}_t \right)_{t \in [0, T] } $-Brownian motions
with continuous sample paths.
Furthermore, let $ \mu, \sigma \colon \mathbb{R} \rightarrow \mathbb{R} $
be four times continuously
differentiable functions.
Generalizing~\eqref{eq:X_intro},
let $(X_t)_{t \in [0,T]}$ be a one-dimensional
diffusion with drift function $\mu$ and diffusion function $\sigma$.
More precisely, let
$ X \colon [0, T] \times \Omega \rightarrow \mathbb{R} $ be an 
(up to indistinguishability unique) 
adapted stochastic process with continuous sample paths which satisfies
\begin{equation}\label{x_def}
  \mathbb{P}\!\left[
    X_t
    =
    \xi^{(1)}
    +
    \int_0^t \mu( X_s ) \, ds
    +
    \int_0^t \sigma( X_s ) \, dW_s^{(1)}
  \right]
  = 1
\end{equation}
for every $ t \in [0, T] $.
The functions $\mu$ and $\sigma^2$
are the infinitesimal mean and the infinitesimal 
variance respectively.

Next we introduce independent versions of the Euler approximation.
Define $ \mathcal{F} / \mathcal{B}(\mathbb{R}) $-measurable mappings
$ Y_n^{N,m} \colon \Omega \rightarrow \mathbb{R} $,
$ n \in \{0,1,\ldots,N\} $, $ N \in \mathbb{N} $,
$ m \in \mathbb{N} $,
by $ Y_0^{N,m}(\omega) := \xi^{(m)}(\omega) $
and by
\begin{equation}  \begin{split}  \label{eq:Euler_approx}
  Y_{n+1}^{N,m}(\omega) 
  := 
      Y_{n}^{N,m}(\omega)
      + \frac{T}{N} \cdot \mu\bigl(Y_{n}^{N,m}(\omega)\bigr)
      + \sigma\bigl(Y_{n}^{N,m}(\omega)\bigr) 
      \cdot \left( 
        W^{(m)}_{\frac{(n+1)T}{N}}(\omega) - W^{(m)}_{\frac{nT}{N}}(\omega)
      \right)
\end{split}     \end{equation}
for every $ n \in \{0,1,\ldots,N-1\} $,
$ N \in \mathbb{N} $ and every
$ m \in \mathbb{N} $.
Now we formulate the main result of this article.

%
%
%
%
\begin{theorem}\label{thm:main_result}
Suppose that $ f, \mu, \sigma\colon \mathbb{R} \rightarrow \mathbb{R} $
are four times continuously differentiable functions
with
\begin{equation}\label{eq:polynomial_growth}
  \left| f^{(n)}(x) \right|
  +
  \left| \mu^{(n)}(x) \right|
  +
  \left| \sigma^{(n)}(x) \right|
  \leq 
  L\left(1 + |x|^{\delta}\right) 
  \quad\forall\, x\in\mathbb{R} 
\end{equation}
for every $ n \in \left\{ 0,1,\dots,4 \right\} $,
where 
$ L \in (0, \infty )$ and $ \delta \in (1,\infty) $
are fixed constants.
Moreover, assume that the drift coefficient
is globally one-sided 
Lipschitz continuous
\begin{equation}\label{eq:onesided}
  (x-y) \cdot \left( \mu(x) - \mu(y) \right) \leq 
  L\left(x-y\right)^2
  \quad\forall\, x,y\in\mathbb{R} 
\end{equation}
and that the diffusion coefficient is globally
Lipschitz continuous
\begin{equation}  \label{eq:sigmalip}
  \left| \sigma (x) - \sigma (y) \right| 
  \leq L \left| x - y \right| 
  \quad \forall\, x,y\in\mathbb{R}.
\end{equation}
Then there are $ \mathcal{F} / \mathcal{B}([0,\infty)) $-measurable mappings $ C_{\varepsilon} \colon \Omega \rightarrow [0,\infty) $,
$ \varepsilon \in (0,1) $, and a set 
$ \tilde{\Omega} \in \mathcal{F} $
with
$ \mathbb{P}[\tilde{\Omega}] = 1 $
such that
\begin{equation}      \label{eq:main_result}
  \left|
    \mathbb{E}\bigg[f(X_T)\bigg] -
    \frac{1}{N^2} \left(
      \sum_{m=1}^{N^2} f(Y_N^{N,m}(\omega))
    \right) 
  \right|
  \leq  C_{\varepsilon}(\omega) \cdot
  \frac{  1                     }
       {  N^{(1-\varepsilon )}  }
\end{equation}
holds for every $ \omega \in \tilde{\Omega} $,
$ N \in \mathbb{N} $
and every $ \varepsilon \in (0,1) $.
\end{theorem}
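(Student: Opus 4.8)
The plan is to discard the classical bias/statistical‑error decomposition \eqref{eq:error_error}, which is useless here because the bias diverges, and to decompose the error instead relative to the truncating events of Section~\ref{sec:Proof_thm_1}. For $ N, m \in \mathbb{N} $ let $ \Omega_N^{(m)} \in \mathcal{F} $ denote the truncating event built from the $ m $‑th data $ \bigl( \xi^{(m)}, W^{(m)} \bigr) $ by the same measurable recipe for all $ m $; then the families $ \bigl( \Omega_N^{(m)} \bigr)_{m \in \mathbb{N}} $ and $ \bigl( \mathbbm{1}_{\Omega_N^{(m)}} f( Y_N^{N,m} ) \bigr)_{m \in \mathbb{N}} $ are i.i.d.\ for each fixed $ N $, with $ \mathbb{P}[\Omega_N^{(m)}] = \mathbb{P}[\Omega_N] $ where $ \Omega_N := \Omega_N^{(1)} $. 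From Section~\ref{sec:Proof_thm_1} I will use three facts: (i)~the uniform moment bound $ \sup_{N \in \mathbb{N}} \mathbb{E}\bigl[ \mathbbm{1}_{\Omega_N} | Y_N^N |^r \bigr] < \infty $ for every $ r \in [1,\infty) $; (ii)~Lemma~\ref{lemm_ew} in quantitative form, i.e.\ a constant $ K_\varepsilon \in (0,\infty) $ with $ \bigl| \mathbb{E}[ f(X_T) ] - \mathbb{E}[ \mathbbm{1}_{\Omega_N} f( Y_N^N ) ] \bigr| \leq K_\varepsilon N^{-(1-\varepsilon)} $ for all $ N \in \mathbb{N} $, $ \varepsilon \in (0,1) $; and (iii)~the rapid decay $ \sum_{N \in \mathbb{N}} N^2\, \mathbb{P}[(\Omega_N)^c] < \infty $. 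Writing $ \bar f_N := \mathbb{E}[ \mathbbm{1}_{\Omega_N} f( Y_N^N ) ] $, the triangle inequality yields, for every $ \omega \in \Omega $ and $ N \in \mathbb{N} $,
\begin{equation*}
  \Bigl| \mathbb{E}[ f(X_T) ] - \tfrac{1}{N^2} \sum_{m=1}^{N^2} f\bigl( Y_N^{N,m}(\omega) \bigr) \Bigr|
  \;\leq\; A_N + B_N(\omega) + D_N(\omega),
\end{equation*}
with $ A_N := \bigl| \mathbb{E}[ f(X_T) ] - \bar f_N \bigr| $, $ B_N(\omega) := \bigl| \bar f_N - \tfrac{1}{N^2} \sum_{m=1}^{N^2} \mathbbm{1}_{\Omega_N^{(m)}}(\omega) f( Y_N^{N,m}(\omega) ) \bigr| $ and $ D_N(\omega) := \tfrac{1}{N^2} \sum_{m=1}^{N^2} \mathbbm{1}_{(\Omega_N^{(m)})^c}(\omega)\, \bigl| f( Y_N^{N,m}(\omega) ) \bigr| $.

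The term $ A_N $ is deterministic and bounded by $ K_\varepsilon N^{-(1-\varepsilon)} $ by (ii). For $ B_N $, fix $ \varepsilon \in (0,1) $; by (i) together with the polynomial growth \eqref{eq:polynomial_growth} of $ f $, the random variables $ Z_m^N := \mathbbm{1}_{\Omega_N^{(m)}} f( Y_N^{N,m} ) - \bar f_N $ are, for fixed $ N $, i.i.d.\ in $ m $, centred, and satisfy $ \sup_{N, m} \mathbb{E}\bigl[ | Z_m^N |^{2q} \bigr] < \infty $ for every $ q \in \mathbb{N} $. The Marcinkiewicz--Zygmund inequality then gives a constant $ \tilde K_q \in (0,\infty) $ with $ \mathbb{E}\bigl[ B_N^{2q} \bigr] \leq \tilde K_q (N^2)^q (N^2)^{-2q} = \tilde K_q N^{-2q} $ for all $ N \in \mathbb{N} $, whence Markov's inequality yields $ \mathbb{P}\bigl[ B_N > N^{-(1-\varepsilon)} \bigr] \leq \tilde K_q N^{-2q\varepsilon} $. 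Choosing $ q = q(\varepsilon) \in \mathbb{N} $ with $ 2 q \varepsilon > 1 $ makes the right‑hand side summable in $ N $, so by the Borel--Cantelli lemma the map $ \omega \mapsto \tilde C_\varepsilon^{(1)}(\omega) := \sup_{N \in \mathbb{N}} N^{1-\varepsilon} B_N(\omega) $ is $ \mathbb{P} $‑a.s.\ finite and $ \mathcal{F} $‑measurable.

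The term $ D_N $ is the crux, and the step I expect to be the main obstacle: since $ | f(Y_N^N) | $ is double‑exponentially large on $ (\Omega_N)^c $ — the mechanism behind \eqref{eq:weak_expl} — one has $ \mathbb{E}[ \mathbbm{1}_{(\Omega_N)^c} | f(Y_N^N) | ] \to \infty $, so no moment estimate on $ D_N $ can succeed and the control must be purely pathwise. Here I invoke (iii): because $ \mathbb{P}\bigl[ \bigcup_{m=1}^{N^2} (\Omega_N^{(m)})^c \bigr] \leq N^2\, \mathbb{P}[(\Omega_N)^c] $ is summable in $ N $, the Borel--Cantelli lemma shows that $ \mathbb{P} $‑a.s.\ there exists $ N_0(\omega) \in \mathbb{N} $ with $ \omega \in \bigcap_{m=1}^{N^2} \Omega_N^{(m)} $ for all $ N \geq N_0(\omega) $; on this full‑probability set $ D_N(\omega) = 0 $ for all $ N \geq N_0(\omega) $, so $ \tilde C^{(2)}(\omega) := \sup_{N \in \mathbb{N}} N\, D_N(\omega) = \max_{1 \leq N < N_0(\omega)} N\, D_N(\omega) < \infty $ (each $ D_N(\omega) $ being a finite sum of finite reals), again $ \mathcal{F} $‑measurable.

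Finally I intersect the full‑probability sets obtained above over $ \varepsilon \in \{ 1/k : k \in \mathbb{N} \} $ to get $ \tilde\Omega \in \mathcal{F} $ with $ \mathbb{P}[\tilde\Omega] = 1 $, on which $ \bigl| \mathbb{E}[ f(X_T) ] - \tfrac{1}{N^2} \sum_{m=1}^{N^2} f( Y_N^{N,m}(\omega) ) \bigr| \leq \bigl( K_{1/k} + \tilde C_{1/k}^{(1)}(\omega) + \tilde C^{(2)}(\omega) \bigr) N^{-(1-1/k)} $ for all $ k, N \in \mathbb{N} $. Given $ \varepsilon \in (0,1) $, choose $ k \in \mathbb{N} $ with $ 1/k \leq \varepsilon $ and put $ C_\varepsilon := \bigl( K_{1/k} + \tilde C_{1/k}^{(1)} + \tilde C^{(2)} \bigr)\, \mathbbm{1}_{\tilde\Omega} $; since $ N^{-(1-1/k)} \leq N^{-(1-\varepsilon)} $ for every $ N \in \mathbb{N} $, this $ \mathcal{F}/\mathcal{B}([0,\infty)) $‑measurable map satisfies \eqref{eq:main_result} on $ \tilde\Omega $, which completes the proof. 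The only genuinely new ingredient beyond the globally Lipschitz theory is the pathwise treatment of $ D_N $ through (iii); everything else is a routine re‑run of the classical arguments applied to the truncated quantities.
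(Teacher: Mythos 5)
Your proposal is correct and takes essentially the same approach as the paper's own proof of Theorem~\ref{thm:main_result}: the same four-part decomposition (your $A_N$ merely combines the paper's two ``bias'' terms, one handled by Lemma~\ref{lemm_ew} and the other by the exponential decay in Lemma~\ref{lemm_omega}), the same use of the uniform restricted moment bound of Corollary~\ref{c:mom_Euler} via a discrete Burkholder/Marcinkiewicz--Zygmund estimate for the truncated Monte Carlo sum, and the same Borel--Cantelli control of the ``outside-event'' term $D_N$ via $\sum_N N^2\,\mathbb{P}[(\Omega_N)^c]<\infty$. The only cosmetic difference is that for the statistical error you invoke Borel--Cantelli through Markov's inequality on $2q$-th moments, whereas the paper directly bounds the $L^p$-norm of the supremum $\sup_N N^{-(1+\varepsilon)}|\cdot|$ (the Kloeden--Neuenkirch device); these arguments are interchangeable.
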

The proof is deferred to Section~\ref{sec:Proof_thm_1}.
For further numerical
approximation results for SDEs with superlinearly growing coefficients,
see e.g.~\cite{HMS2002},
\cite{MSH2002},
\cite{MT2005},
\cite{Szpruch}
and the references in 
the introductory
section of~\cite{HJK2011}.

Note that
the assumptions of Theorem~\ref{thm:main_result}
ensure the existence of an 
adapted stochastic
process $ X\colon [0,T] \times \Omega
\rightarrow \mathbb{R} $
with continuous sample
paths which satisfies
\eqref{x_def} and
\begin{equation}
  \mathbb{E}\left[ 
    \sup_{0 \leq t \leq T}
    \left| X_t \right|^p
  \right] < \infty
\end{equation}
for all $ p \in [1,\infty) $
(see Theorem 2.6.4 
in \cite{GS72}).
Therefore, the expression
$ \mathbb{E}\bigl[ f( X_T ) \bigr ] $
in \eqref{eq:main_result}
in Theorem \ref{thm:main_result}
is well-defined.

Since $ O\!\left( N^3 \right) $
function evaluations, 
arithmetical operations and random variables
are needed to compute
the expression
$
  \frac{1}{N^2} \left(
    \sum_{m=1}^{N^2} f(Y_N^{N,m}(\omega))
  \right) 
$
in \eqref{eq:main_result}
for $ \omega \in \Omega $,
Theorem~\ref{thm:main_result} shows
that the Monte Carlo Euler method
converges under the above assumptions
with order
$\tfrac{1}{3}-$
with respect to the computational effort.
This is the standard convergence order
as in the global Lipschitz case
(see e.g. \cite{DG95}).

\section{Simulations}%
\label{sec:Simulations}
In this section, we simulate the second moment of two stochastic
differential equations.
First we simulate the stochastic Ginzburg-Landau 
equation with multiplicative noise, 
which we choose as there exists 
an explicit solution for this SDE.
Let $(X_t)_{ t \in [0,1] }$ 
be the solution of
\begin{equation}\label{eq:ginzburg2}
  d X_t = \left( 
  \frac{1}{2} X_t - 
  X_t^3 \right) dt 
  + X_t \, dW_t,
  \qquad
  X_0 = 1
\end{equation}
for all $ t \in [0,1]$.
The exact solution at time $1$ 
is known explicitly (see e.g.\ Section 4.4 in \cite{KlPl}) and is given by
\begin{equation} \label{eq:ginzburg_explicit2}
 X_1=\frac{ \exp\left( W_1\right)}
          {\sqrt{1+2 \int_0^1\exp\left(2 W_s\right)ds}}.
\end{equation}
The exact value of the second moment
$\mathbb{E}\bigl[ ( X_T )^2 
\bigr] = \mathbb{E}\bigl[(X_1)^2\bigr]$
is not known.
Instead we use the exact 
solution~\eqref{eq:ginzburg_explicit2}
at time $1$
to approximate the second moment.
For this, we approximate the Lebesgue integral in the denominator
of~\eqref{eq:ginzburg_explicit2} with a Riemann sum 
with $3 \cdot 10^3$
summands.
Moreover, we approximate the second moment at time $1$
by a Monte Carlo simulation with $10^7$ independent approximations of $X_1$.
This results in the approximate value
$\mathbb{E}\bigl[(X_1)^2\bigr]\approx 0.4945$.

Next we approximate the second moment at time $1$ with the
Monte Carlo Euler method.
We will sample one random $ \omega \in \Omega $ 
and calculate the Monte Carlo Euler approximations 
for this $ \omega \in \Omega $ for 
different discretization step sizes.
More precisely,
Table~\ref{t:ginzburg} 
\begin{table}
  \begin{center}
  \begin{tabular}{|c|c|c|c|c|}
  \hline
   $N=2^0$
  &$N=2^1$
  &$N=2^2$
  &$N=2^3$
  &$N=2^4$
  \\ \hline
   $1.1379$
  &$0.9118$
  &$0.4258$
  &$0.2942$
  &$0.4386$
  \\ \hline\hline
  $N=2^5$
  &$N=2^6$
  &$N=2^7$
  &$N=2^8$
  &$N=2^9$
  \\ \hline
   $0.4641$
  &$0.4663$
  &$0.4859$
  &$0.4904$
  &$0.4935$
  \\ \hline
  \end{tabular}
  \end{center}
  \caption{\footnotesize\label{t:ginzburg}
   Monte Carlo Euler approximations
   \eqref{eq:simulation1}
    of $\mathbb{E}\left[ (X_1)^2 \right] $ of the
   SDE \eqref{eq:ginzburg2}.}
\end{table}
shows
the Monte Carlo Euler
approximation
\begin{equation}\label{eq:simulation1}
    \frac{1}{N^2}
      \sum_{m=1}^{N^2} \left(Y_N^{N,m}(\omega)\right)^2
\end{equation}
of the second moment at time $1$
of the SDE \eqref{eq:ginzburg2}
for every $ N \in \left\{ 2^0, 2^1, 2^2, \dots, 2^9
\right\}$
and one random $ \omega \in \Omega $.
In Figure~\ref{f:ginzburg_plot},
\begin{figure}[htp]
  \begin{center}
  \includegraphics[width=8.4cm]{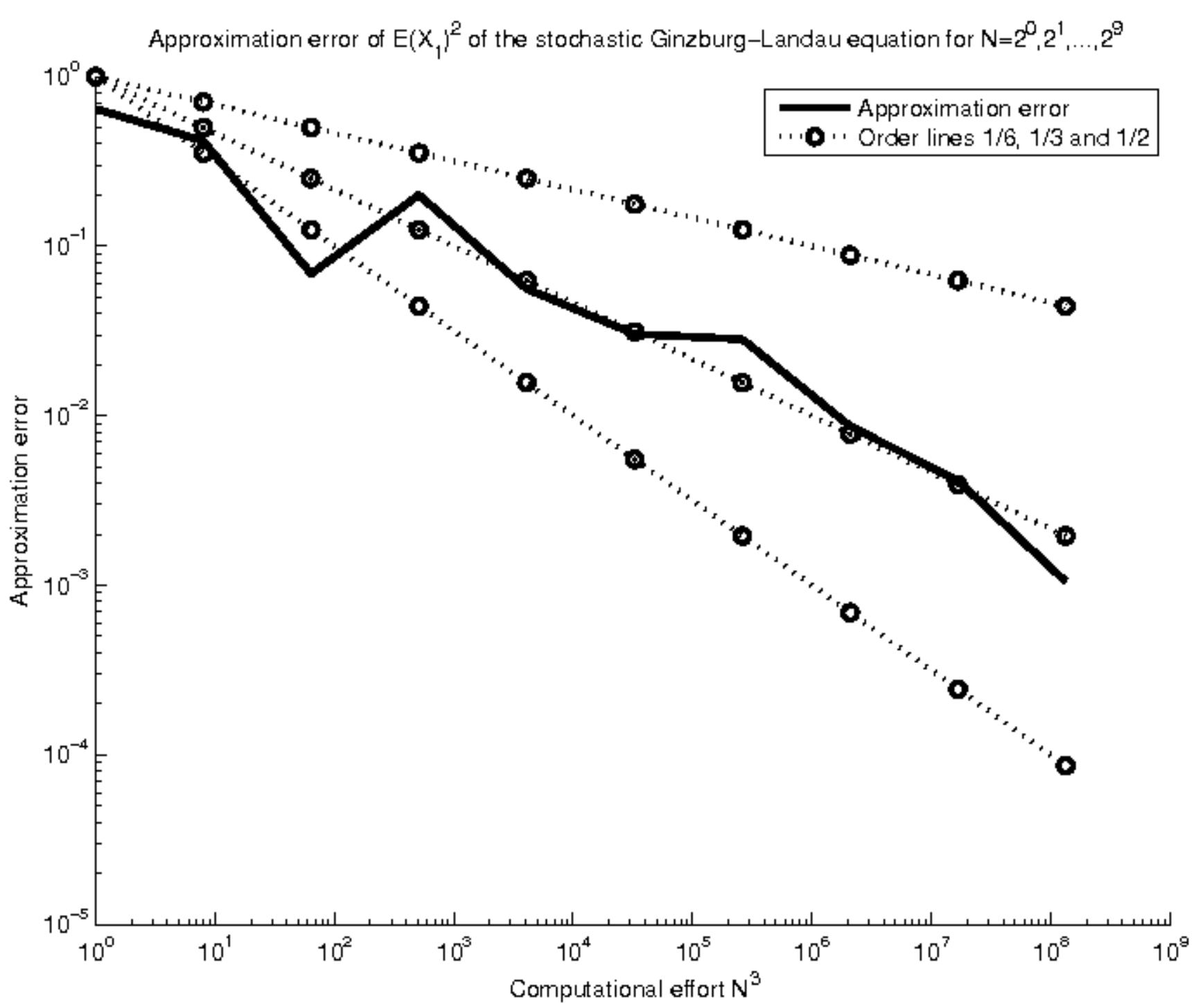}
  \caption{\footnotesize\label{f:ginzburg_plot}
   Approximation error \eqref{eq:simulation2} of the
   Monte Carlo Euler 
   approximations \eqref{eq:simulation1} 
   of $\mathbb{E}\bigl[ (X_1)^2 \bigr]$ of the 
   SDE \eqref{eq:ginzburg2}.}
   \end{center}
\end{figure}
the approximation error
of these Monte Carlo Euler approximations, i.e.,
the quantity
\begin{equation}\label{eq:simulation2}
  \left|
    0.4945-
    \frac{1}{N^2}
      \sum_{m=1}^{N^2} \left(Y_N^{N,m}(\omega)\right)^2
  \right|,
\end{equation}
is plotted against $N^3$
for every 
$N \in \left\{ 2^0, 2^1, 2^2, \dots, 2^9 \right\} $.
Note that
$N^3$ is the computational effort up to a constant.
The three order lines in Figure~\ref{f:ginzburg_plot}
correspond to the convergence orders
$ \frac{1}{6}$, $ \frac{1}{3}$ and $ \frac{1}{2}$.
Hence, Figure~\ref{f:ginzburg_plot}
indicates that the Monte Carlo
Euler method converges in the case of 
the stochastic Ginzburg-Landau 
equation \eqref{eq:ginzburg2}
with its theoretically predicted order
$ \frac{1}{3}- $.

Next we simulate our introductory example.
Let $(X_t)_{t \in [0,T]}$ be the 
solution of the SDE~\eqref{eq:X_intro}
with $T=1$, $\bar{\sigma}=1$ and $x_0=0$.
The SDE~\eqref{eq:X_intro} thus reads as
\begin{equation}\label{eq:introSDE2}
  d X_t = - X_t^3 \, dt + dW_t,
\qquad
  X_0 = 0
\end{equation}
for all $ t \in [0,1] $.
Here there exists no explicit expression for the solution or its second
moments.
As an approximation of the exact value 
$\mathbb{E}\bigl[ (X_1)^2 \bigr]$, we now
take a Monte Carlo Euler approximation 
with a larger $N $.
We choose $N=2^{12}$ and obtain
the value 
$
  0.4529 \approx
  \mathbb{E}\bigl[ (X_1)^2 \bigr] 
$
as an approximation 
of $ \mathbb{E}\bigl[ (X_1)^2 \bigr] $.
Table~\ref{t:intro} shows the value of 
\begin{table}
  \begin{center}
  \begin{tabular}{|c|c|c|c|c|}
  \hline
   $N=2^0$
  &$N=2^1$
  &$N=2^2$
  &$N=2^3$
  &$N=2^4$
  \\ \hline
   $1.4516$
  &$0.5166$
  &$0.4329$
  &$0.5308$
  &$0.4285$
  \\ \hline\hline
  $N=2^5$
  &$N=2^6$
  &$N=2^7$
  &$N=2^8$
  &$N=2^9$
  \\ \hline
   $0.4452$
  &$0.4602$
  &$0.4517$
  &$0.4548$
  &$0.4537$
  \\ \hline
  \end{tabular}
  \end{center}
  \caption{\footnotesize\label{t:intro}
   Monte Carlo Euler approximations 
   \eqref{eq:simulation1b}
   of $\mathbb{E}\bigl[ (X_1)^2 \bigr]$ of the
   SDE~\eqref{eq:introSDE2}.}
\end{table}
the Monte Carlo Euler approximation
\begin{equation}\label{eq:simulation1b}
    \frac{1}{N^2}
      \sum_{m=1}^{N^2} \left(Y_N^{N,m}(\omega)\right)^2
\end{equation}
of the second moment at time $1$
of the SDE \eqref{eq:introSDE2}
for every $ N \in 
\left\{ 2^0, 2^1, 2^2, \dots, 2^9
\right\}$
and one random $ \omega \in \Omega $.
In Figure~\ref{f:intro_plot},
\begin{figure}[htp]
   \begin{center}
  \includegraphics[width=8.7cm]{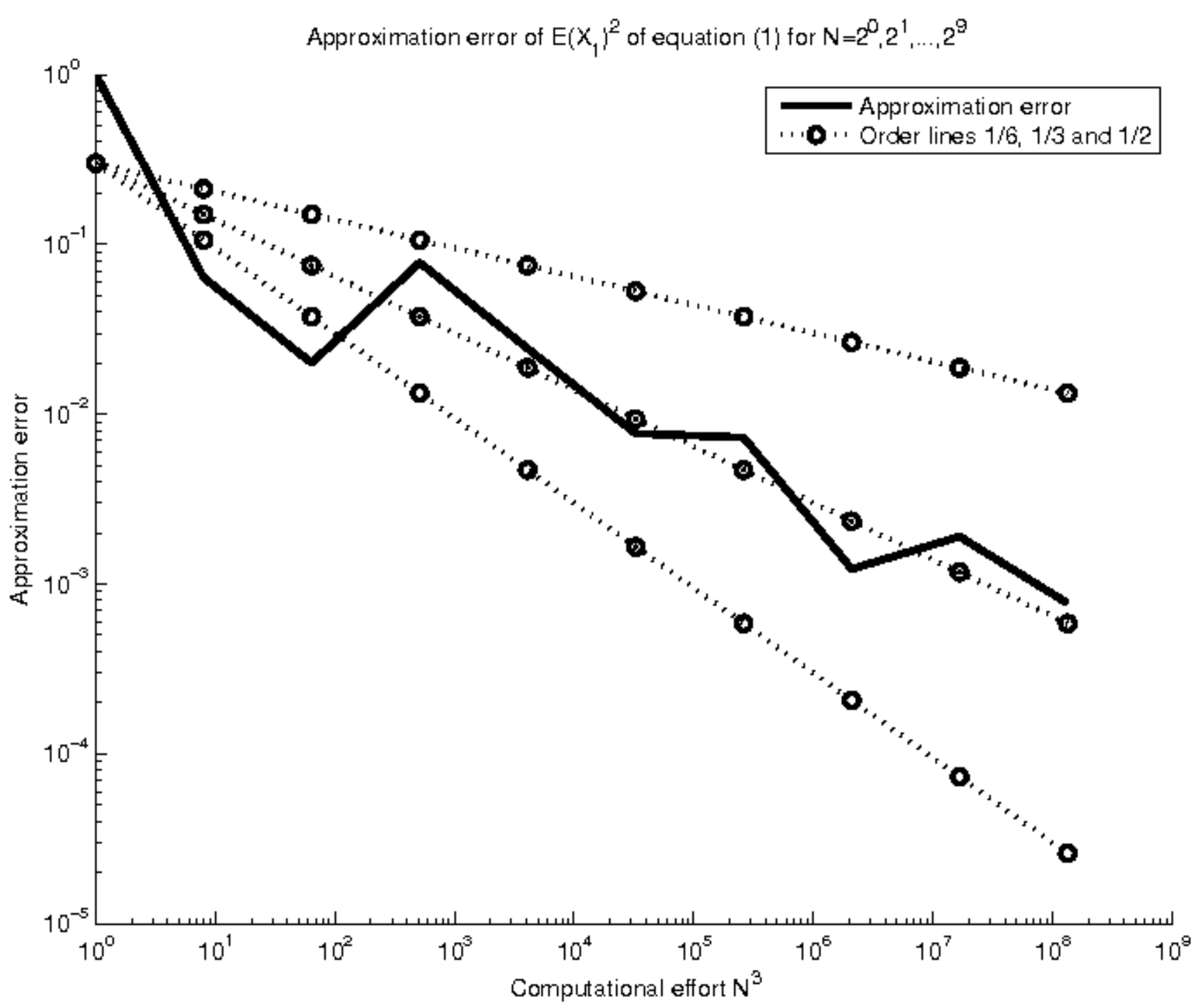}
  \caption{\footnotesize\label{f:intro_plot}
   Approximation error \eqref{eq:difference}
   of the 
   Monte Carlo Euler approximations 
   \eqref{eq:simulation1b}
   of $\mathbb{E}\bigl[ (X_1)^2 \bigr]$ of the
   SDE~\eqref{eq:introSDE2}.}
   \end{center}
\end{figure}
the approximation error
of these Monte Carlo Euler approximations, i.e.,
the quantity
\begin{equation}\label{eq:difference}
  \left|
    0.4529-
    \frac{1}{N^2}
      \sum_{m=1}^{N^2} (Y_N^{N,m}(\omega))^2
  \right|,
\end{equation}
is plotted against $N^3$
for every 
$N \in \left\{ 2^0, 2^1, 2^2, \dots, 2^9 \right\} $.
Note that
$N^3$ is the computational effort up to a constant.
The three order lines in 
Figure~\ref{f:intro_plot} 
correspond to the convergence orders
$ \frac{1}{6}$, $ \frac{1}{3}$ and $ \frac{1}{2}$.
Therefore, Figure~\ref{f:intro_plot}
suggests that
the Monte Carlo
Euler method converges 
in the case of the SDE \eqref{eq:introSDE2}
with its theoretically predicted order
$ \frac{1}{3}- $.
%
%
%
\section{Proof of Theorem \ref{thm:main_result}}%
\label{sec:Proof_thm_1}
First we introduce more notation.
Recall the standard Brownian motion
$W^{(1)}\colon[0,T]\times\Omega\to\mathbb{R}$
and the Euler approximations
$ Y^{N,1}_n\colon \Omega \rightarrow
\mathbb{R} $, $ n \in \left\{ 0,1,\dots,
N\right\}$, $ N \in \mathbb{N}$, 
from Section~\ref{sec:Main result}.
Throughout this section, we use the
stochastic process $ W\colon [0,T] \times
\Omega \rightarrow \mathbb{R} $
and the $ \mathcal{F} $/$ \mathcal{B}(\mathbb{R})$-measurable
mappings $ Y^N_n\colon \Omega
\rightarrow \mathbb{R} $, $ n \in \left\{ 0,1,\dots, N
\right\}$, $ N \in \mathbb{N} $, given by
\begin{equation}\label{y_def}
  W_t(\omega) := W^{(1)}_t(\omega)
\qquad
  \text{and}
\qquad
  Y_n^N(\omega) := Y_n^{N,1}(\omega)
\end{equation} 
for every $ t \in [0,T] $, 
$ n \in \{0,1, \ldots, N \} $,
$ \omega \in \Omega $
and every $ N \in \mathbb{N} $.
\subsection{Outline}
For general drift and diffusion functions, our proof of
Theorem~\ref{thm:main_result} somewhat buries the main
new ideas. To explain these
ideas, we give now a very
rough outline (the precise
estimates and assertions
can be found in Subsections~\ref{sec:notation}-\ref{sec:modifiedweak}
below).
The main step will be to establish uniform boundedness of the
restricted absolute moments
of the Euler approximations
\begin{equation}   \label{eq:mom}
  \sup_{N\in\mathbb{N}}
  \mathbb{E}\Bigl[ 
    \mathbbm{1}_{\Omega_N} 
    \bigl| Y_N^N \bigr| 
  \Bigr]
  <\infty
\end{equation}
where $(\Omega_N)_{N\in\mathbb{N}}$ is a sequence of events whose probabilities
converge to $1$ sufficiently fast.
From here, one can then adapt 
the arguments 
of the global Lipschitz case
to derive the modified numerically weak
convergence~\eqref{eq:modified_weak_convergence_intro}
and to obtain Theorem~\ref{thm:main_result}.

The idea behind~\eqref{eq:mom} is now explained on the example
of negative cubic drift and multiplicative noise.
Formally, we consider $\mu(x)=-x^3$, $\sigma(x)=x$ 
for all $x\in\mathbb{R}$
and $ \xi^{(1)}(\omega) = 1 $ for all
$ \omega \in \Omega $.
The Euler approximation \eqref{y_def}
is then given by $Y_0^N=1$ and
\begin{equation}
  Y_{n+1}^N=Y_n^N-\frac{T}{N}\bigl( Y_n^N \bigr)^3
  +Y_n^N\cdot\Bigl(W_{\frac{(n+1)T}{N}}-W_{\frac{nT}{N}}
                      \Bigr)
\end{equation}
for all $ n\in\{0,1,\ldots,N-1\} $ and
all $ N \in \mathbb{N} $.
Fix $ N \in \mathbb{N} $ and 
assume that the Euler approximation 
$ ( Y^N_k )_{ k \in \left\{0,1,\dots,N\right\} }$
does not change sign until
and including the
$n$-th approximation step
for some
$ n \in \left\{0,1,\dots,N\right\}$ 
which we fix for now,
that is, $Y_k^N\geq0$ for all $k \in 
\left\{ 0,1,\ldots,n\right\}$.
Then, using $1+x\leq \exp(x)$ for 
all $x\in\mathbb{R}$,
we have
\begin{equation}  \begin{split}
  Y_{k}^N &= Y_{k-1}^N - \frac{T}{N}\bigl( Y_{k-1}^N \bigr)^3
           +Y_{k-1}^N\cdot\Bigl(W_{\frac{k T}{N}}-W_{\frac{(k-1)T}{N}}
                      \Bigr)\\
    &\leq Y_{k-1}^N \Bigl( 1 + W_{\frac{k T}{N}}-W_{\frac{(k-1)T}{N}}
                \Bigr)\\
    &\leq Y_{k-1}^N \exp\Bigl(  W_{\frac{k T}{N}}-W_{\frac{(k-1)T}{N}}
                    \Bigr)
\end{split}     \end{equation}
and iterating this inequality shows
\begin{equation}  \begin{split}
  Y_{k}^N
    &\leq Y_{k-2}^N
                    \exp\Bigl(
                         W_{\frac{(k-1)T}{N}}-W_{\frac{(k-2)T}{N}}
                         \Bigr)
                    \exp\Bigl(
                          W_{\frac{k T}{N}}-W_{\frac{(k-1)T}{N}}
                         \Bigr)\\
    &= Y_{k-2}^N \exp\Bigl(
                          W_{\frac{k T}{N}}-W_{\frac{(k-2)T}{N}}
                         \Bigr)\\
    &\leq\ldots\leq 
    Y_0^N \exp\Bigl(  W_{\frac{k T}{N}}
                    \Bigr)
    =: \tilde{D}_{k}^N
\end{split}     \end{equation}
for all $k \in \{ 0,1,\ldots,n\}$.
Thus the Euler approximation 
$ ( Y^N_k )_{ k \in 
\left\{ 0,1, \dots, n
\right\} } $
is bounded above by the dominating process
$ (\tilde{D}_k^N)_{k \in \left\{0,1,\dots,n\right\} } $.
This dominating process has uniformly bounded absolute moments.
So the absolute moments of the Euler approximation can only be unbounded
if the Euler approximation changes its sign.
Now if $Y_k$ happens to be very large
for one $ k \in \left\{ 0,1, \dots, N-1 \right\}$,
then $Y_{k+1}$ is negative with absolute value being very large because
of the negative cubic drift.
Through a sequence of changes in sign,
it could happen that the absolute value of the Euler approximation increases
more and more.
To avoid this, we restrict the Euler approximation
to an event $\Omega_N$
on which the drift alone cannot change the sign of the
Euler approximation.
On $\Omega_N$, the Euler approximation changes sign only due to the
diffusive part.
As the diffusion function is at most
linearly growing, these changes of sign
can be controlled.
In between consecutive changes of sign, the Euler approximation is again
bounded by a dominating process as above.
Through this pathwise comparison with a dominating process,
we will establish the uniform boundedness~\eqref{eq:mom} of the
restricted absolute moments.
For the details, we refer to Lemma~\ref{lemm_1},
which is the key result in our
proof of Theorem~\ref{thm:main_result}.
%
%
%
\subsection{Notation and auxiliary lemmas}
\label{sec:notation}
In order to show 
Theorem \ref{thm:main_result},
the following objects are needed.
First of all, define
$ t_n^N := \frac{nT}{N} $ for every
$ n \in \{0,1,\ldots,N\} $ and every $ N \in \mathbb{N} $
and let the $ \mathcal{B}(\mathbb{R}) 
$/$\mathcal{B}(\mathbb{R}) $-measurable
mapping
$ \tilde{\sigma} \colon 
\mathbb{R} \rightarrow \mathbb{R} $
be given by
\begin{equation}
  \tilde{\sigma}(x) :=
  \begin{cases}
    \frac{ \left( \sigma(x) - \sigma(0) \right) }{ x }
    & : x \neq 0 \\
    0 & : x = 0
  \end{cases}
\end{equation}
for all $ x \in \mathbb{R} $.
Moreover, 
let the $ \mathcal{F} / 
\mathcal{B}(\mathbb{R}) $-measurable
mappings 
$ \alpha_n^{N,m} \colon \Omega \rightarrow \mathbb{R}$
and
$ \beta_n^{N,m} \colon \Omega \rightarrow \mathbb{R}$
be given by
\begin{equation}
\label{eq:def_alpha}
  \alpha_n^{N,m}(\omega)
  := 
  \frac{TL}{N} + \tilde{\sigma}(Y_n^{N,m}(\omega))
  \cdot \left(
     W^{(m)}_{t_{n+1}^N}(\omega) - 
     W^{(m)}_{t_{n}^N}(\omega)
  \right)  
\end{equation}
and
\begin{equation}
\label{eq:def_beta}
  \beta_n^{N,m}(\omega)
  :=
  \frac{T\mu(0)}{N}
  +
  \sigma(0) \cdot
  \left(
    W^{(m)}_{t_{n+1}^N}(\omega) 
    - W^{(m)}_{t_{n}^N}(\omega)
  \right)
\end{equation}
for every 
$ \omega \in \Omega $,
$ n \in \{0,1,\ldots,N-1\} $
and every
$ N, m \in \mathbb{N} $.
For simplicity we also use
$ \alpha_n^N, \beta_n^N \colon \Omega \rightarrow \mathbb{R} $
given by
$
  \alpha_n^N(\omega) := \alpha_n^{N,1}(\omega)
$ and
$
  \beta_n^N(\omega) := \beta_n^{N,1}(\omega)
$
for every $ \omega \in \Omega $, $ n \in \{0,1,\ldots, N-1 \} $
and every $ N \in \mathbb{N} $.
Using these ingredients,
we now define the dominating process.
Let the
$ \mathcal{F} / \mathcal{B}(\mathbb{R}) $-measurable
mapping
$ D_{v,w}^{N,m} \colon \Omega \rightarrow \mathbb{R} $
be given by
\begin{equation}  \begin{split}\label{d_def}
  D_{v,w}^{N,m}(\omega)
  &:=
  e^{\left(
    \sum_{l=v}^{w-1} \alpha_l^{N,m}(\omega)
  \right)}
  \left(
    T\left|\mu(0)\right| + \left| \sigma(0) \right| 
    + \left|\xi^{(m)}(\omega)\right|  + 1
  \right)
\\
 &\qquad +
  \sum_{k=v}^{w-1}
  \sgn( Y_k^{N,m}(\omega) ) \,
  e^{\left(
    \sum_{l=k+1}^{w-1} \alpha_l^{N,m}(\omega)
  \right)}
  \beta_k^{N,m}(\omega)
\end{split}     \end{equation}
for every
$ \omega \in \Omega$,
$ v, w \in \{0,1,\ldots,N\} $ and every
$ N, m \in \mathbb{N} $, where
$ \sgn \colon \mathbb{R} \rightarrow \{-1,1\} $
is given by $ \sgn(x) := 1 $ for every 
$ x \in [0,\infty) $ and $ \sgn(x) := -1 $
for every $ x \in (-\infty, 0) $.
As usual, $\sum_{l=v}^{w-1} \alpha_l^{N,m}(\omega)=0$
for every $v\in\{w,w+1,\ldots,N\}$, $w\in\{0,1,\ldots,N\}$,
$ \omega \in \Omega $
and every
$ N, m \in \mathbb{N} $.
Note that 
$ D_{v,w}^{N,m} \colon \Omega
\rightarrow \mathbb{R} $
only depends on the Brownian motion
$ W^{(m)} \colon [0,T] \times
\Omega \rightarrow \mathbb{R} $
and the initial random variable
$ \xi^{(m)}\colon \Omega \rightarrow 
\mathbb{R} $
for every $ v,w \in \left\{ 0,1, \dots, N \right\}$
and every $ N, m \in \mathbb{N} $.
Therefore,
$D_{v,w}^{N,m}$, $ m \in \mathbb{N} $,
is a sequence of independent random variables
for every $v,w \in \left\{0,1,\dots,N\right\}$
and every $ N \in \mathbb{N} $.
We will show that the Euler approximation 
is dominated by the dominating
process since the last change of sign.
More formally,
let the
$ \mathcal{F} $/$ \mathcal{P}( \{0,1,\ldots,n \} 
) $-measurable mapping 
$ \tau_n^{N} \colon \Omega \rightarrow \{0,1,\ldots,n\} $
be given by
\begin{equation}\label{tau_def}
  \tau_n^{N}(\omega)
  :=
  \max\left(
    \left\{ 0 \right\} 
    \cup
    \left\{
      k \in \left\{ 1,2,\ldots,n \right\}
      \bigg| \,
      \sgn( Y_{k-1}^{N}(\omega) )
      \neq
      \sgn( Y_{k}^{N}(\omega) )
    \right\}
  \right) 
\end{equation}
for every 
$ \omega \in \Omega $,
$ n \in \{0,1,\ldots,N\} $
and every $ N \in \mathbb{N} $.
The random time 
$\tau_n^N \colon \Omega \rightarrow
\left\{ 0,1, \dots, n \right\} $ is the last time of a change of sign of $ Y^N_k $, 
$ k \in \left\{ 0, 1, \dots, n \right\} $,
for every 
$ n \in \left\{ 0, 1, \dots, N \right\} $
and every
$ N \in \mathbb{N} $.
%
%
%
%
%
%
%
%
%
%
%
%
Lemma~\ref{lemm_1} below shows that 
$|Y_n^{N}|$ is bounded by $D_{\tau_n^N,n}^{N,1}$
on a certain event $\Omega_{N,n}$
for every $ n \in \left\{ 0,1, \dots, N \right\} $
and every $ N \in \mathbb{N} $.
Next we define these events 
$\Omega_{N,n} $, $ n \in \left\{ 0,1, \dots, N \right\}$,
$ N \in \mathbb{N} $, such that the drift alone cannot 
cause a change of sign and such that the 
increment of the Brownian motion is not
extraordinary large.
Let the real number
$ r_N \in [0, \infty) $
be given by
\begin{equation}\label{eq:defrN}
  r_N
  :=
  \min\left(
    \frac{N^{\frac{1}{4}}}{L},
    \left[
      \max\left( 0,
      \frac{N}{
      T \left(
        \sup_{s \in [-1,1]} \left| \mu'(s) \right| 
        + 3L
      \right)
      } 
      - 1
      \right)
    \right]^{\frac{1}{\left(\delta-1\right)}}
  \right) 
\end{equation}
for every $ N \in \mathbb{N} $.
Now define sets 
$ \Omega_{N,n},
\Omega^m_N,
 \Omega_N 
\in \mathcal{F} $
by
\begin{equation}\label{omega_defvw}
  \Omega_{N,n} := 
  \left\{
    \omega \in \Omega \ \bigg| \
    \sup_{ v,w \in \{ 0, 1, \ldots, n\}} 
    \left| D_{v,w}^{N,1} ( \omega ) \right|
    \leq r_N,
    \sup_{ k \in \{ 0, 1, \ldots, n-1 \}}
    \left|
      W_{t_{k+1}^N} ( \omega ) -
      W_{t_{k}^N} ( \omega )
    \right| \leq N^{-\frac{1}{4}}
  \right\}
\end{equation}
by
\begin{equation}  \begin{split}
  \Omega^m_{N} := 
  \bigg\{
    \omega \in \Omega \ \bigg| \
    \sup_{ 
      v, w \in \left\{ 0,1, \dots, N \right\} 
    } 
    \left| D_{v,w}^{N,m} ( \omega ) \right|
    \leq r_N,	
    \sup_{ n \in \{ 0, 1, \ldots, N-1 \}}
    \left|
      W^{(m)}_{t_{n+1}^N} ( \omega ) -
      W^{(m)}_{t_{n}^N} ( \omega )
    \right| \leq N^{-\frac{1}{4}}
  \bigg\}
\end{split}     \end{equation}
and by
\begin{equation}\label{omega_def}
  \Omega_N 
  := \Omega_{N,N}
  = \Omega_N^1
\end{equation}
for every
$ n \in \left\{ 0, 1, \dots, N \right\} $
and every
$ N, m \in \mathbb{N}$.
Finally, define
$ \tilde{\Omega} \in \mathcal{F} $
by
\begin{equation}  \begin{split}\label{deftildeomega}
  \tilde{\Omega}
  &:=
  \left(
    \bigcup_{ N \in \mathbb{N} }
    \bigcap^{\infty}_{ M = N }
    \bigcap^{M^2}_{m=1}
    \Omega^m_M 
  \right) 
  \\
  &\quad\qquad
  \bigcap
  \left( 
    \bigcap_{ 
      \varepsilon > 0 
    }
    \left\{ \!
      \omega \in \Omega 
      \bigg|
      \sup_{ N \in \mathbb{N} } \!
      \frac{ 
      \left|      
        \sum^{ N^2 }_{ m=1 } \!
        \left(
          \mathbbm{1}_{ \Omega_N^m }(\omega)
          \!\cdot\!
          f( 
            Y^{N,m}_N(\omega)
          )
          -
          \mathbb{E}\left[ 
            \mathbbm{1}_{ \Omega_N }
            f( 
              Y^{N}_N
            )
          \right]
        \right)
      \right| 
      }{
        N^{ \left( 1 + \varepsilon \right) }
      }
      < \infty \!
    \right\} 
  \right) .
\end{split}     \end{equation}
Note that $ \tilde{\Omega} $ is indeed
in $ \mathcal{F} $.
Moreover, we write
$ \left\| Z \right\|_{L^p}
:= \left( \mathbb{E}\left[ \left| Z \right|^p
\right] \right)^{ \frac{1}{p} }
\in [0,\infty] $
for all $ p \in [1,\infty) $
and all $ \mathcal{F} $/$\mathcal{B}(\mathbb{R})$-measurable
mappings $ Z\colon \Omega \rightarrow \mathbb{R} $.
Our proof of Theorem \ref{thm:main_result}
uses the following lemmas.
%
%
\begin{lemma}[Burkholder-Davis-Gundy
inequality]\label{discretedavis}
Let $N \in \mathbb{N}$ and let 
$ Z_1, \dots, Z_N
: \Omega \rightarrow \mathbb{R} $
be $ \mathcal{F} $/$ \mathcal{B}(\mathbb{R})
$-measurable mappings with $ \mathbb{E}| Z_n |^2 < \infty $ for all
$ n \in \{1, \ldots, N \} $ and with
$ \mathbb{E}\left[ Z_{n+1} | Z_1, \dots, Z_n \right] = 0 $
for all $ n \in \{1, \ldots, N-1 \} $. 
Then we obtain 
\begin{equation}  \label{eq:bdg}
    \left\| Z_1 + \ldots + Z_N 
    \right\|_{L^p}
  \leq
  K_p \cdot
  \left(
    \left\| Z_1 \right\|_{L^p}^2
    +
    \ldots
    +
    \left\| Z_N \right\|_{L^p}^2
  \right)^{ \frac{1}{2} }
\end{equation}
for every $ p \in [2, \infty ) $, where 
$ K_p $, $p \in [2, \infty) $, are universal constants.
\end{lemma}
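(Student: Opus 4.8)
The plan is to reduce the inequality to a single pointwise Taylor-type estimate for the map $x \mapsto |x|^p$ and then to propagate it along the partial sums by a telescoping argument. First I would dispose of the trivial case: if $\|Z_n\|_{L^p} = \infty$ for some $n \in \{1,\dots,N\}$, then the right-hand side of~\eqref{eq:bdg} is $+\infty$ and the claim is vacuous, so from now on I assume $\|Z_n\|_{L^p} < \infty$ for all $n$; by the triangle inequality this also gives $\|S_k\|_{L^p} < \infty$ for all $k$, where $S_k := Z_1 + \dots + Z_k$ and $S_0 := 0$. Writing $\mathcal{G}_k := \sigma(Z_1,\dots,Z_k)$, the hypothesis $\mathbb{E}[Z_{k+1} \mid Z_1,\dots,Z_k] = 0$ says precisely that $(S_k)_{k=0}^{N}$ is an $(\mathcal{G}_k)$-martingale, hence $(|S_k|^p)_{k=0}^{N}$ is a submartingale and $\|S_k\|_{L^p} \le \|S_N\|_{L^p}$ for every $k \le N$.

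The key elementary ingredient I would establish is the pointwise bound
\begin{equation*}
  |a+b|^p
  \le
  |a|^p + p\,|a|^{p-1}\sgn(a)\,b + c_p\left( |a|^{p-2}b^2 + |b|^p \right)
  \qquad \forall\, a,b \in \mathbb{R}
\end{equation*}
with a constant $c_p \in (0,\infty)$ depending only on $p \in [2,\infty)$. This follows by writing $|a+b|^p - |a|^p = \int_0^1 p\,|a+tb|^{p-1}\sgn(a+tb)\,b\,dt$, subtracting the value of the integrand at $t=0$, and using that $u \mapsto |u|^{p-1}\sgn(u)$ is continuously differentiable on $\mathbb{R}$ with derivative $(p-1)|u|^{p-2}$ when $p \ge 2$, together with the crude estimate $(|a|+|b|)^{p-2} \le 2^{p}\left(|a|^{p-2}+|b|^{p-2}\right)$; one may take $c_p = p(p-1)2^{p-1}$.

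Applying this bound with $a = S_k$ and $b = Z_{k+1}$, taking expectations, and using $\mathbb{E}[Z_{k+1} \mid \mathcal{G}_k] = 0$ to annihilate the linear term, I obtain $\mathbb{E}|S_{k+1}|^p \le \mathbb{E}|S_k|^p + c_p\,\mathbb{E}\!\left[|S_k|^{p-2}Z_{k+1}^2\right] + c_p\,\mathbb{E}|Z_{k+1}|^p$ for $k \in \{0,1,\dots,N-1\}$. Summing over $k$, bounding $\mathbb{E}\!\left[|S_k|^{p-2}Z_{k+1}^2\right] \le \|S_k\|_{L^p}^{p-2}\|Z_{k+1}\|_{L^p}^2 \le \|S_N\|_{L^p}^{p-2}\|Z_{k+1}\|_{L^p}^2$ by Hölder's inequality (with exponents $\tfrac{p}{p-2}$ and $\tfrac{p}{2}$) and the submartingale monotonicity, and using the elementary estimate $\sum_{n=1}^{N}\|Z_n\|_{L^p}^p \le \big(\sum_{n=1}^{N}\|Z_n\|_{L^p}^2\big)^{p/2}$ (valid for $p \ge 2$, since $\ell^1 \hookrightarrow \ell^{p/2}$ contractively), I arrive at
\begin{equation*}
  \|S_N\|_{L^p}^{p}
  \le
  c_p\,\|S_N\|_{L^p}^{p-2}\,A^2 + c_p\,A^p,
  \qquad
  A := \Big( \textstyle\sum_{n=1}^{N}\|Z_n\|_{L^p}^2 \Big)^{1/2}.
\end{equation*}

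Finally I would close the estimate by a case distinction on $x := \|S_N\|_{L^p}$: if $x \le A$ then~\eqref{eq:bdg} holds with constant $1$, while if $x > A$ the last display gives $x^p \le 2c_p\,x^{p-2}A^2$, hence $x \le \sqrt{2c_p}\,A$; in both cases $\|S_N\|_{L^p} \le K_p\,A$ with the universal constant $K_p := \max(1,\sqrt{2c_p})$, which depends only on $p$. The boundary case $p = 2$ is just the orthogonality identity $\|S_N\|_{L^2}^2 = A^2$ (with $K_2 = 1$), for which the Hölder step is unnecessary. The only genuinely technical point is verifying the pointwise Taylor inequality with a constant independent of $a$, $b$, and $N$; everything else is bookkeeping. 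Alternatively, and more briefly, one could invoke the classical continuous-time Burkholder--Davis--Gundy inequality for the càdlàg martingale $t \mapsto S_{\lfloor t \rfloor}$ on $[0,N]$, whose quadratic variation is $\sum_{n \le \lfloor t \rfloor} Z_n^2$, and then estimate the resulting square function by Minkowski's inequality in $L^{p/2}$.
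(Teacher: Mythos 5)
Your argument is correct and self-contained, in contrast to the paper, which simply cites Theorem~6.3.10 in Stroock's book and gives no proof of its own. The route you take --- a pointwise second-order Taylor estimate for $x \mapsto |x|^p$ with the linear term killed by the martingale-difference property under a telescoping sum over the partial sums $S_k$, then H\"older with exponents $\frac{p}{p-2}$ and $\frac{p}{2}$ combined with the submartingale monotonicity $\|S_k\|_{L^p} \le \|S_N\|_{L^p}$ to collapse the middle term, the contractive embedding $\ell^1 \hookrightarrow \ell^{p/2}$ to absorb $\sum_n\|Z_n\|_{L^p}^p$ into $A^p$, and finally the dichotomy $\|S_N\|_{L^p} \le A$ versus $\|S_N\|_{L^p} > A$ to solve the resulting implicit inequality --- is a streamlined version of Burkholder's classical discrete-time method and yields an explicit universal constant $K_p = \sqrt{2c_p}$ with $c_p = p(p-1)2^{p-1}$. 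What your direct proof buys over the paper's citation is complete self-containment and an explicit $K_p$; what the citation buys is brevity. The alternative you sketch at the end, namely viewing $t \mapsto S_{\lfloor t\rfloor}$ as a c\`adl\`ag martingale with quadratic variation $\sum_{n \le \lfloor t \rfloor} Z_n^2$, applying the continuous-time Burkholder--Davis--Gundy inequality, and then using Minkowski's inequality in $L^{p/2}$ to pass from the square function to $\bigl(\sum_n \|Z_n\|_{L^p}^2\bigr)^{1/2}$, is the usual short route and presumably close in spirit to the cited reference. You also handle the two potential pitfalls correctly: the hypothesis only gives $Z_n \in L^2$, so one must dispose of the case $\|Z_n\|_{L^p} = \infty$ before invoking H\"older, and for $p = 2$, where the exponent $\frac{p}{p-2}$ degenerates, the estimate reduces to the orthogonality identity $\|S_N\|_{L^2}^2 = \sum_n \|Z_n\|_{L^2}^2$.
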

%
%
%
%
%
%
%
%
The following lemma is the key result
in our proof of Theorem~\ref{thm:main_result}.
\begin{lemma}[Dominator Lemma]\label{lemm_1}
Let $ Y_n^N \colon \Omega \rightarrow \mathbb{R} $, 
$ D_{n,m}^{N,1} 
: \Omega \rightarrow \mathbb{R} $, $ \tau_n^N \colon \Omega \rightarrow \mathbb{R} $ and $ \Omega_{N,n} 
\in \mathcal{F} $ for $ n, m \in \left\{ 0, 1, \dots, N \right\}$
and $ N \in \mathbb{N} $ be given by \eqref{y_def}, \eqref{d_def}, \eqref{tau_def} and 
\eqref{omega_defvw}. Then we have     
\begin{equation}\label{lemmprof1}
  \left| Y_n^N(\omega) \right| 
  \leq
  D_{\tau_n^N(\omega),n}^{N,1}(\omega)
\end{equation}
for every $ \omega \in \Omega_{N,n} $, 
$ n \in \left\{ 0,1,\ldots,N \right\} $ 
and every
$ N \in \mathbb{N} $.
\end{lemma}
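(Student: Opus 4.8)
The plan is to fix $N\in\mathbb{N}$ and prove \eqref{lemmprof1} by induction on $n\in\{0,1,\ldots,N\}$, restricting attention to an arbitrary $\omega\in\Omega_{N,n}$; since $\Omega_{N,n+1}\subseteq\Omega_{N,n}$, working on $\Omega_{N,n}$ for the inductive step is legitimate. The base case $n=0$ is immediate: $\tau_0^N(\omega)=0$, the empty drift--diffusion sums in \eqref{d_def} vanish, and so $D_{0,0}^{N,1}(\omega)=T|\mu(0)|+|\sigma(0)|+|\xi^{(1)}(\omega)|+1\geq|\xi^{(1)}(\omega)|=|Y_0^N(\omega)|$. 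For the inductive step, I would distinguish two cases according to whether $\mathrm{sgn}(Y_{n}^N(\omega))\neq\mathrm{sgn}(Y_{n+1}^N(\omega))$ (a change of sign occurs at step $n+1$) or not.

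\textbf{Case 1: no change of sign at step $n+1$.} Here $\tau_{n+1}^N(\omega)=\tau_n^N(\omega)=:v$ and $Y_k^N(\omega)$ has constant sign for $k\in\{v,v+1,\ldots,n+1\}$. I would rewrite the Euler recursion \eqref{eq:Euler_approx} by splitting $\mu(y)=\mu(0)+(\mu(y)-\mu(0))$ and $\sigma(y)=\sigma(0)+y\,\tilde\sigma(y)$, so that
\begin{equation*}
  Y_{k+1}^N=Y_k^N\Bigl(1+\tilde\sigma(Y_k^N)\bigl(W_{t_{k+1}^N}-W_{t_k^N}\bigr)+\tfrac{T}{N}\cdot\tfrac{\mu(Y_k^N)-\mu(0)}{Y_k^N}\Bigr)+\beta_k^N .
\end{equation*}
On $\Omega_{N,n}$ the one-sided Lipschitz bound \eqref{eq:onesided} gives $\bigl(\mu(Y_k^N)-\mu(0)\bigr)Y_k^N\le L(Y_k^N)^2$, hence the drift ratio is $\le L$, so the multiplicative factor is bounded above by $1+\alpha_k^N(\omega)$, and $1+\alpha_k^N\le e^{\alpha_k^N}$; because signs are constant one may take absolute values and multiply the inequalities across $k=v,\ldots,n$ without sign reversals. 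A second, crucial point is that the definition of $r_N$ together with $|W_{t_{k+1}^N}-W_{t_k^N}|\le N^{-1/4}$ and $|D^{N,1}_{v,w}|\le r_N$ on $\Omega_{N,n}$ forces $1+\alpha_k^N(\omega)>0$ (this is exactly why $r_N$ is capped by $N^{1/4}/L$ and by the $\mu'$-term), so no spurious sign flips are introduced by the factors. Iterating and bounding $|Y_v^N(\omega)|$ by the ``initialization'' part of $D^{N,1}_{v,n+1}$ — using either $v=0$ and $|Y_0^N|=|\xi^{(1)}|\le T|\mu(0)|+|\sigma(0)|+|\xi^{(1)}|+1$, or $v\ge1$ and the bound on $|Y_v^N|$ obtained at the previous change of sign — yields $|Y_{n+1}^N(\omega)|\le D^{N,1}_{v,n+1}(\omega)=D^{N,1}_{\tau_{n+1}^N(\omega),n+1}(\omega)$, matching \eqref{d_def} term by term (the $\beta_k$-sum picks up exactly the $\mathrm{sgn}(Y_k^N)$ factors).

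\textbf{Case 2: change of sign at step $n+1$.} Then $\tau_{n+1}^N(\omega)=n+1$ and $D^{N,1}_{n+1,n+1}(\omega)=T|\mu(0)|+|\sigma(0)|+|\xi^{(1)}(\omega)|+1$. I would estimate $|Y_{n+1}^N(\omega)|$ directly from the recursion: from the induction hypothesis $|Y_n^N(\omega)|\le D^{N,1}_{\tau_n^N(\omega),n}(\omega)\le r_N$, and from $|W_{t_{n+1}^N}-W_{t_n^N}|\le N^{-1/4}$, one gets $|\tilde\sigma(Y_n^N)\cdot(\ldots)|\le L N^{-1/4}$ and, using the polynomial growth \eqref{eq:polynomial_growth} for $\mu$ together with $|Y_n^N|\le r_N$ and the second (``$\mu'$'') branch in \eqref{eq:defrN}, the drift increment $\tfrac{T}{N}|\mu(Y_n^N)-\mu(0)|$ is controlled; the point is that a sign change means $Y_{n+1}^N$ and $Y_n^N$ lie on opposite sides of $0$, so $|Y_{n+1}^N|$ is no larger than the total size of the increment $|Y_{n+1}^N-Y_n^N|$ minus... more precisely $|Y_{n+1}^N(\omega)|\le|Y_{n+1}^N(\omega)-Y_n^N(\omega)|$ when the sign flips, and the latter is $\le \tfrac{T}{N}|\mu(Y_n^N)|+|\sigma(Y_n^N)|\,N^{-1/4}$, which the choice of $r_N$ is engineered to keep below $T|\mu(0)|+|\sigma(0)|+|\xi^{(1)}(\omega)|+1$. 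This closes the induction.

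\textbf{Main obstacle.} The delicate part is Case 1: ensuring that the one-step multiplicative factors $1+\alpha_k^N(\omega)$ are genuinely positive on $\Omega_{N,n}$, so that taking absolute values and iterating $1+x\le e^x$ is valid and does not accidentally flip signs — this is the whole reason the rather baroque definition \eqref{eq:defrN} of $r_N$ (with its $N^{1/4}/L$ cap and its $\sup_{s\in[-1,1]}|\mu'(s)|$ term) is needed, and verifying that these two caps indeed imply $\frac{T}{N}\cdot\frac{\mu(Y_k^N)-\mu(0)}{Y_k^N}+\tilde\sigma(Y_k^N)(W_{t_{k+1}^N}-W_{t_k^N})>-1$ on $\Omega_{N,n}$, uniformly in $k$ and $N$, is the technical heart of the argument. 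Matching the iterated bound term-by-term with the explicit formula \eqref{d_def} for $D^{N,1}_{v,w}$, and checking the bookkeeping of the $\mathrm{sgn}(Y_k^N)\beta_k^N$ contributions across a block of constant sign, is routine but must be done carefully.
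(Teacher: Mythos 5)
Your Case 1 is on the right track but attaches significance to the wrong thing, and your Case 2 has a genuine gap that the paper's proof avoids by a crucial additional idea you have not noticed.

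On Case 1: you flag as ``crucial'' the need to show $1+\alpha_k^N(\omega)>0$ on $\Omega_{N,n}$, and you claim this is the purpose of the cap $r_N\le N^{1/4}/L$. Neither is true. Positivity of $1+\alpha_k^N$ is never needed: the inequality $1+x\le e^x$ holds for every $x\in\mathbb{R}$, and since $Y_k^N(\omega)$ has a fixed sign between consecutive sign changes, multiplying the bound $Y_k^N(1+\alpha_k^N)+\beta_k^N$ by nothing more than the nonnegativity of $Y_k^N$ (or $-Y_k^N$) and then by $e^{\alpha_k^N}\ge 0$ already gives the one-step recursion $\left|Y_{n+1}^N\right|\le \left|Y_n^N\right|e^{\alpha_n^N}+\sgn(Y_n^N)\beta_n^N$, which combined with the induction hypothesis $\left|Y_n^N\right|\le D_{\tau_n^N,n}^{N,1}$ reproduces the explicit form of $D_{\tau_n^N,n+1}^{N,1}$ term by term. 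The caps in $r_N$ play no role here at all; both are reserved for the sign--change step.

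On Case 2: the bound you propose, $\left|Y_{n+1}^N\right|\le\left|Y_{n+1}^N-Y_n^N\right|\le\tfrac{T}{N}\left|\mu(Y_n^N)\right|+\left|\sigma(Y_n^N)\right|N^{-1/4}$, does \emph{not} close the induction. Under $\left|Y_n^N\right|\le r_N$ and the polynomial growth $\left|\mu(x)\right|\le L(1+\left|x\right|^\delta)$, the $\mu'$--branch of \eqref{eq:defrN} only gives $\tfrac{T}{N}(\sup_{[-1,1]}\left|\mu'\right|+3L)(1+r_N^{\delta-1})\le 1$, which translates into $\tfrac{T}{N}\left|\mu(Y_n^N)-\mu(0)\right|\lesssim r_N$, not into a uniform constant. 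Since $r_N\to\infty$, your estimate for $\left|Y_{n+1}^N\right|$ blows up and certainly cannot be dominated by $D_{n+1,n+1}^{N,1}=T\left|\mu(0)\right|+\left|\sigma(0)\right|+\left|\xi^{(1)}\right|+1$. The key idea you are missing is exactly the pair of inequalities
\begin{equation*}
  x+\tfrac{T}{N}\mu(x)-\tfrac{T\mu(0)}{N}\ge 0\ \text{for}\ x\in[0,r_N],
  \qquad
  x+\tfrac{T}{N}\mu(x)-\tfrac{T\mu(0)}{N}\le 0\ \text{for}\ x\in[-r_N,0),
\end{equation*}
which are what the $\mu'$--branch of \eqref{eq:defrN} is actually designed to produce. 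In the sign--change case $Y_n^N\ge 0$, $Y_{n+1}^N<0$, one writes $Y_{n+1}^N$ as the sum of $(Y_n^N+\tfrac{T}{N}\mu(Y_n^N)-\tfrac{T\mu(0)}{N})$, which is nonnegative and can therefore be \emph{dropped} because $Y_{n+1}^N$ is negative, plus $\tfrac{T\mu(0)}{N}+\sigma(Y_n^N)\Delta W_n$. Only this residual, not the full drift, needs to be estimated, and it is $\le T\left|\mu(0)\right|+(L r_N+\left|\sigma(0)\right|)N^{-1/4}\le T\left|\mu(0)\right|+\left|\sigma(0)\right|+1$ using the other branch $r_N\le N^{1/4}/L$. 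Without these monotonicity inequalities the sign--change case, which is the real difficulty of the lemma, does not go through.
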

\noindent
The domination~\eqref{lemmprof1} might not 
look helpful at first view
since $D_{\tau_n^N,n}^{N,1}$ depends on the 
Euler approximation and since
$\tau_{n}^N$ is in general not a stopping
time for $ n \in \left\{ 1, \dots, N \right\}$
and $ N \in \mathbb{N} $.
However, the dependence of the dominating process 
on the Euler approximation
can be controlled as $\tilde{\sigma}$ 
is bounded and the dependence of
$ D^{ N , 1 }_{ \tau_n^N, n } $ on 
$\tau_{n}^N$ 
for all $ n \in \left\{ 0,1,\dots,
N \right\} $ and all $ N \in \mathbb{N}$
is no problem as $D_{v,w}^{N,1}$
can be controlled uniformly in
$v, w \in \left\{ 0,1,\dots,N\right\}$
and $ N \in \mathbb{N}$.
This is subject of the
following lemma.
%
%
%
\begin{lemma}[Uniformly bounded 
absolute moments of
the dominator]\label{lem_sup}
Let 
$ D^N_{n,m} 
\colon \Omega \rightarrow [0, \infty) $
for 
$n, m \in \left\{0,1,\dots,N\right\}$
and $ N \in \mathbb{N} $ 
be given by \eqref{d_def}. 
Then we have
\begin{equation}\label{dnmoment}
  \sup_{N \in \mathbb{N}} \,
  \mathbb{E}\!
  \left[
    \sup_{ v,w \in 
    \left\{0,1,\dots,N\right\} }
    \left| D^{N,1}_{v,w} \right|^p
  \right]
  < 
  \infty
\end{equation}
for all $ p \in [1,\infty) $.
\end{lemma}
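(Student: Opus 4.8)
The plan is to estimate the two terms in the definition \eqref{d_def} of $D_{v,w}^{N,1}$ separately, using the fact that the exponential weights $\exp(\sum_l \alpha_l^{N,1})$ have uniformly bounded moments, where the key point is the cutoff $\frac{N^{1/4}}{L}$ built into $r_N$ that keeps the $\frac{TL}{N}$ summands in $\alpha_l^{N,1}$ under control. First I would rewrite $\alpha_l^{N,1} = \frac{TL}{N} + \tilde{\sigma}(Y_l^{N,1})\,(W_{t_{l+1}^N}-W_{t_l^N})$ and observe that, since $\tilde{\sigma}$ is bounded by $L$ (a consequence of \eqref{eq:sigmalip}), the sum $\sum_{l=v}^{w-1}\tilde\sigma(Y_l^{N,1})(W_{t_{l+1}^N}-W_{t_l^N})$ is, for each fixed pair $v\le w$, a martingale difference sum whose increments are bounded in $L^p$ by $L\cdot\|W_{t_{l+1}^N}-W_{t_l^N}\|_{L^p} = L\,c_p\sqrt{T/N}$. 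Hence by the discrete Burkholder--Davis--Gundy inequality (Lemma~\ref{discretedavis}) its $L^p$-norm is $\le K_p L c_p \sqrt{T/N}\cdot\sqrt{w-v}\le K_p L c_p\sqrt{T}$, uniformly in $N$ and in $v,w$. The deterministic part contributes $\sum_{l=v}^{w-1}\frac{TL}{N}\le TL$. So each individual exponent $\sum_{l=v}^{w-1}\alpha_l^{N,1}$ has all exponential moments $\mathbb{E}[\exp(p\sum_l\alpha_l^{N,1})]$ bounded uniformly in $N,v,w$, using that a sum of a bounded deterministic term and a martingale sum of Gaussian increments has Gaussian-type tails.

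The subtle point is that we need the supremum over $v,w$ inside the expectation in \eqref{dnmoment}, not just a bound for each fixed $v,w$. For this I would dominate $\sup_{v,w}\exp(\sum_{l=v}^{w-1}\alpha_l^{N,1})$ by $\exp\big(\sum_{l=0}^{N-1}\frac{TL}{N} + 2\sup_{v,w}|\sum_{l=v}^{w-1}\tilde\sigma(Y_l^{N,1})(W_{t_{l+1}^N}-W_{t_l^N})|\big)\le e^{TL}\exp\big(2\sup_{v}|M_v|\cdot 2\big)$ where $M_v:=\sum_{l=0}^{v-1}\tilde\sigma(Y_l^{N,1})(W_{t_{l+1}^N}-W_{t_l^N})$ is a discrete martingale (the supremum over a difference is at most twice the supremum over the partial sums). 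By Doob's maximal inequality and then discrete BDG applied to $M_N$, $\|\sup_v|M_v|\|_{L^q}\le \frac{q}{q-1}\|M_N\|_{L^q}\le \frac{q}{q-1}K_q L c_q\sqrt{T}$ for every $q\in(1,\infty)$, uniformly in $N$. Since $\sup_v|M_v|$ has uniformly (in $N$) bounded $L^q$-norms for all $q$, it has a uniformly bounded exponential moment of every order, hence so does $e^{2TL}\exp(4\sup_v|M_v|)$, which gives a uniform bound on $\mathbb{E}[\sup_{v,w}\exp(p\sum_{l=v}^{w-1}\alpha_l^{N,1})]$ for all $p\in[1,\infty)$.

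With that in hand, the first term of $D_{v,w}^{N,1}$ is bounded by $\sup_{v,w}\exp(\sum_l\alpha_l^{N,1})$ times $(T|\mu(0)|+|\sigma(0)|+|\xi^{(1)}|+1)$, and since $\xi^{(1)}$ has all moments finite and is independent of $W^{(1)}$, a Hölder argument gives the claimed uniform bound for this term. For the second term, I would note $|\beta_k^{N,1}|\le \frac{T|\mu(0)|}{N}+|\sigma(0)|\,|W_{t_{k+1}^N}-W_{t_k^N}|$, so that
\begin{equation}
  \sup_{v,w}\Big|\sum_{k=v}^{w-1}\sgn(Y_k^{N,1})\,e^{\sum_{l=k+1}^{w-1}\alpha_l^{N,1}}\beta_k^{N,1}\Big|
  \le \Big(\sup_{v,w}e^{\sum_l\alpha_l^{N,1}}\Big)\sum_{k=0}^{N-1}\Big(\tfrac{T|\mu(0)|}{N}+|\sigma(0)|\,|W_{t_{k+1}^N}-W_{t_k^N}|\Big),
\end{equation}
and here $\sum_{k=0}^{N-1}|W_{t_{k+1}^N}-W_{t_k^N}|$ has $L^p$-norm $\le N\cdot c_p\sqrt{T/N}=c_p\sqrt{TN}$, which is \emph{not} uniformly bounded, so a crude bound is not enough. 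Instead I would split $\sum_k (\cdots)\beta_k^{N,1}$ into the deterministic part (bounded by $T|\mu(0)|$) and the martingale transform $\sum_k \sgn(Y_k^{N,1})e^{\sum_{l=k+1}^{w-1}\alpha_l^{N,1}}\sigma(0)(W_{t_{k+1}^N}-W_{t_k^N})$; after bounding the exponential weight by the uniform maximal bound above (and pulling it out via Hölder), the remaining sum is again a discrete martingale in $w$ (the weight $e^{\sum_{l=k+1}^{w-1}\alpha_l^{N,1}}$ is, however, $w$-dependent, so I would instead fix $w$, bound $e^{\sum_{l=k+1}^{w-1}\alpha_l}\le \sup_{v,w}e^{\sum_l\alpha_l}$, use Hölder to separate this factor from the martingale sum $\sum_{k=0}^{w-1}\sgn(Y_k^{N,1})\sigma(0)(W_{t_{k+1}^N}-W_{t_k^N})$, apply Doob and discrete BDG to the latter to get an $L^q$-bound of order $|\sigma(0)|K_q c_q\sqrt{T}$ uniform in $N$ and $w$, and finally take the supremum over $w$ via Doob's maximal inequality). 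Combining the two terms and using Hölder's inequality to multiply the uniformly-bounded exponential-weight factor against the uniformly-bounded martingale-supremum factor yields \eqref{dnmoment}. The main obstacle is precisely this last point: handling the supremum over $v,w$ together with the $N$-dependent exponential weights and the genuinely non-uniform $\ell^1$-growth of the Brownian increments, which forces one to keep the martingale structure of the $\beta$-sum rather than estimating it termwise.
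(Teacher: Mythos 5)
Your proposal correctly isolates the two things that must be controlled (the exponential weights and the $\beta$-sum), and you rightly observe that a termwise bound on the $\beta$-sum cannot work; but both of the mechanisms you propose to close the argument have genuine gaps.

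\textbf{First gap (exponential moments).} You deduce from $\|\sup_v|M_v|\|_{L^q}\leq\frac{q}{q-1}K_q L c_q\sqrt{T}$ that $\sup_v|M_v|$ has ``a uniformly bounded exponential moment of every order''. That inference does not follow. Since both the constant $K_q$ in Lemma~\ref{discretedavis} and $c_q=\|N(0,1)\|_{L^q}$ grow like $\sqrt{q}$, your chain only gives $\|\sup_v|M_v|\|_{L^q}=O(q)$, i.e.\ subexponential-type growth, and $\mathbb{E}[\exp(a\,\sup_v|M_v|)]$ is then finite only for $a$ small (roughly $a<1/(Ce)$), not for arbitrary $a$ — whereas the lemma requires a bound for all $p\in[1,\infty)$, hence all exponential parameters. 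The object is in fact subgaussian, but to see this one must bypass the $L^q$-chain: the paper applies Doob's $L^p$ maximal inequality to the positive submartingale $\exp(z\sum_{l<n}\tilde{\sigma}(Y_l^N)(W_{t_{l+1}^N}-W_{t_l^N}))$ and then iterates the conditional moment generating function $\mathbb{E}[e^{cG}]=e^{c^2/2}$ of each Gaussian increment, yielding $\|\sup_n e^{z(\sum_{l<n}\alpha_l^N)}\|_{L^p}\leq\frac{p}{p-1}e^{TL+pL^2T/2}$ directly, which is finite for every $p$.

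\textbf{Second, more serious gap ($\beta$-sum).} You propose to ``bound $e^{\sum_{l=k+1}^{w-1}\alpha_l}\leq\sup_{v,w}e^{\sum_l\alpha_l}$, use H\"older to separate this factor from the martingale sum $\sum_{k}\sgn(Y_k^{N,1})\sigma(0)\Delta W_k$, apply Doob and discrete BDG to the latter''. This does not work: the weight $e^{\sum_{l=k+1}^{w-1}\alpha_l}$ is indexed by $k$ and sits \emph{inside} the sum, so it cannot be replaced by a single random factor and pulled out — in general $|\sum_k a_k b_k|\not\leq(\sup_k|a_k|)|\sum_k b_k|$, and doing so throws you back onto the termwise bound $\sum_k|\beta_k|$ that you already (correctly) rejected. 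Moreover, even for fixed $w$, $e^{\sum_{l=k+1}^{w-1}\alpha_l}$ depends on $W$ over $[t_{k+1}^N,t_w^N]$, so it is \emph{not} $\mathcal{F}_{t_k^N}$-measurable, and the weighted sum is not a martingale transform, so Doob and Lemma~\ref{discretedavis} cannot be applied to it. The paper's decomposition, which your plan does not contain, is: re-center the weight as $e^{\sum_{l=k+1}^{w-1}\alpha_l}=e^{\sum_{l=0}^{w-1}\alpha_l}\,e^{-\sum_{l=0}^{k}\alpha_l}$, pull out the $w$-dependent prefactor $e^{\sum_{l=0}^{w-1}\alpha_l}$ via H\"older (now legitimate because it is constant in $k$), and then split the remaining weight $e^{-\sum_{l=0}^{k}\alpha_l}=e^{-\sum_{l=0}^{k-1}\alpha_l}+e^{-\sum_{l=0}^{k-1}\alpha_l}(e^{-\alpha_k}-1)$. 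The first summand is $\mathcal{F}_{t_k^N}$-predictable, giving a genuine martingale transform to which Doob and Lemma~\ref{discretedavis} apply; the second is estimated term by term using $\|e^{-\alpha_k^N}-1\|_{L^p}=O(N^{-1/2})$ (Lemma~\ref{l:exp_alpha}) so that, multiplied by $\|\Delta W_k\|_{L^p}=O(N^{-1/2})$ and summed over $N$ terms, it remains uniformly bounded. This double decomposition is the crux of the proof and is missing from your plan.
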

From Lemma~\ref{lemm_1}
and from Lemma~\ref{lem_sup}, we immediately
conclude that the restricted absolute 
moments of the Euler approximation
are uniformly bound\-ed.
\begin{corollary}[Bounded moments of the Euler approximation] \label{c:mom_Euler}
  Let the Euler approximation 
  $Y_n^N\colon\Omega\to\mathbb{R}$
  for $n \in \left\{0,1,\dots,N\right\}$ 
  and $ N \in \mathbb{N} $ 
  be given by \eqref{y_def}. Then we have
  \begin{equation}
    \sup_{N \in \mathbb{N}}
    \sup_{ n \in \{0,1,\ldots,N\} }
    \mathbb{E}
    \bigg[
      \mathbbm{1}_{ { \Omega }_{N,n} } 
        \left| Y_n^N\right|^p
    \bigg]
    < 
    \infty
  \end{equation}
  for every $p \in [1,\infty) $.
\end{corollary}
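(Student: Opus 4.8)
The plan is to combine the Dominator Lemma (Lemma~\ref{lemm_1}) with the uniform moment bound for the dominator (Lemma~\ref{lem_sup}) in an essentially immediate way. Fix $p \in [1,\infty)$, $N \in \mathbb{N}$ and $n \in \{0,1,\dots,N\}$. On the event $\Omega_{N,n}$, Lemma~\ref{lemm_1} gives the pointwise bound $|Y_n^N(\omega)| \leq D_{\tau_n^N(\omega),n}^{N,1}(\omega)$. Since $D_{v,w}^{N,1} \geq 0$ and $\tau_n^N(\omega) \in \{0,1,\dots,n\} \subseteq \{0,1,\dots,N\}$, we may dominate the right-hand side by the supremum over all index pairs:
\begin{equation*}
  \mathbbm{1}_{\Omega_{N,n}}(\omega)\,\bigl| Y_n^N(\omega) \bigr|
  \leq
  D_{\tau_n^N(\omega),n}^{N,1}(\omega)
  \leq
  \sup_{v,w \in \{0,1,\dots,N\}} \bigl| D_{v,w}^{N,1}(\omega) \bigr| .
\end{equation*}

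Raising to the $p$-th power and taking expectations (note the right-hand side no longer depends on $n$), one obtains
\begin{equation*}
  \mathbb{E}\Bigl[ \mathbbm{1}_{\Omega_{N,n}} \bigl| Y_n^N \bigr|^p \Bigr]
  \leq
  \mathbb{E}\Bigl[ \sup_{v,w \in \{0,1,\dots,N\}} \bigl| D_{v,w}^{N,1} \bigr|^p \Bigr]
\end{equation*}
for every $n \in \{0,1,\dots,N\}$ and every $N \in \mathbb{N}$. Taking the supremum over $n \in \{0,1,\dots,N\}$ and then over $N \in \mathbb{N}$ and invoking Lemma~\ref{lem_sup} yields
\begin{equation*}
  \sup_{N \in \mathbb{N}} \sup_{n \in \{0,1,\dots,N\}}
  \mathbb{E}\Bigl[ \mathbbm{1}_{\Omega_{N,n}} \bigl| Y_n^N \bigr|^p \Bigr]
  \leq
  \sup_{N \in \mathbb{N}}
  \mathbb{E}\Bigl[ \sup_{v,w \in \{0,1,\dots,N\}} \bigl| D_{v,w}^{N,1} \bigr|^p \Bigr]
  < \infty ,
\end{equation*}
which is the assertion.

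There is essentially no obstacle here: the corollary is a formal consequence of the two preceding lemmas, and the only points to check are the measurability of $\mathbbm{1}_{\Omega_{N,n}} |Y_n^N|^p$ (clear, since $\Omega_{N,n} \in \mathcal{F}$ and $Y_n^N$ is $\mathcal{F}/\mathcal{B}(\mathbb{R})$-measurable) and that the index range of $(\tau_n^N, n)$ is contained in the range over which Lemma~\ref{lem_sup} controls the dominator. All the genuine work sits in Lemma~\ref{lemm_1} and Lemma~\ref{lem_sup}; the corollary merely records their combination, which is exactly the uniform boundedness of restricted absolute moments announced in~\eqref{eq:mom} of the outline.
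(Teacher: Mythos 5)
Your proposal is correct and follows exactly the route the paper intends: the paper itself states that the corollary follows ``immediately'' from Lemma~\ref{lemm_1} and Lemma~\ref{lem_sup} and gives no further details, so your argument is precisely that omitted glue.

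One small technical caveat in your intermediate chain. You assert $D^{N,1}_{v,w} \geq 0$, but this non-negativity is not actually established by the paper for \emph{all} $\omega$ (the second sum in the definition~\eqref{d_def} involves $\beta^{N,m}_k$, which has indefinite sign); what the Dominator Lemma yields is only $D^{N,1}_{\tau^N_n(\omega),n}(\omega) \geq \lvert Y^N_n(\omega)\rvert \geq 0$ \emph{on} $\Omega_{N,n}$. Consequently the first inequality in your displayed chain,
$\mathbbm{1}_{\Omega_{N,n}}(\omega)\,\lvert Y^N_n(\omega)\rvert \leq D^{N,1}_{\tau^N_n(\omega),n}(\omega)$,
could fail for $\omega\notin\Omega_{N,n}$ if $D$ happened to be negative there. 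This is harmless because the desired endpoint inequality
$\mathbbm{1}_{\Omega_{N,n}}(\omega)\,\lvert Y^N_n(\omega)\rvert \leq \sup_{v,w\in\{0,\dots,N\}}\lvert D^{N,1}_{v,w}(\omega)\rvert$
is manifestly true for every $\omega$ (it is $0\leq$ a nonnegative quantity off $\Omega_{N,n}$, and Lemma~\ref{lemm_1} plus $\lvert D\rvert\leq\sup\lvert D\rvert$ on $\Omega_{N,n}$); just write the chain with the indicator carried along, e.g.\ $\mathbbm{1}_{\Omega_{N,n}}\lvert Y^N_n\rvert \leq \mathbbm{1}_{\Omega_{N,n}}\lvert D^{N,1}_{\tau^N_n,n}\rvert \leq \sup_{v,w}\lvert D^{N,1}_{v,w}\rvert$, and the appeal to $D\geq 0$ becomes unnecessary. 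The rest --- raising to the $p$-th power, taking expectations, observing that the bound is uniform in $n$ and $N$, and invoking Lemma~\ref{lem_sup} --- is correct as you wrote it.
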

%
%
%
%
%
%
%
%
%
%
Next we estimate the probability of
$ \Omega_N $ for every large $ N \in \mathbb{N}$.

%
%
\begin{lemma}[Full probability]\label{lemm_omega}
Let $ \Omega_N \in \mathcal{F} $ 
for $ N \in \mathbb{N} $
and $ \tilde{\Omega} \in \mathcal{F} $ 
be given by 
\eqref{omega_def}
and 
\eqref{deftildeomega}.
Then we have that
\begin{equation}
  \sup_{ N \in \mathbb{N} }
  \left(
    N^{4} \cdot
    \mathbb{P}
    \Big[ (\Omega_N)^c \Big]
  \right)
  < \infty
  \qquad
  \text{ and }
  \qquad
  \mathbb{P}
  \Big[ \tilde{\Omega} \Big]
  = 1 
\end{equation}
holds.
\end{lemma}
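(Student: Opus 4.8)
The plan is to prove the two assertions separately, with the first (the quantitative bound on $\mathbb{P}[(\Omega_N)^c]$) being the substantial part and the second ($\mathbb{P}[\tilde\Omega]=1$) following from the first by a Borel--Cantelli argument together with a strong-law estimate. First I would decompose $(\Omega_N)^c$ according to the definition \eqref{omega_def}, \eqref{omega_defvw}: writing $\Omega_N=\Omega_N^1$, its complement is contained in the union of the event $A_N:=\{\sup_{v,w\in\{0,\ldots,N\}}|D_{v,w}^{N,1}|> r_N\}$ and the event $B_N:=\{\sup_{n\in\{0,\ldots,N-1\}}|W_{t_{n+1}^N}-W_{t_n^N}|> N^{-1/4}\}$, so that $\mathbb{P}[(\Omega_N)^c]\le\mathbb{P}[A_N]+\mathbb{P}[B_N]$. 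For $B_N$, each Brownian increment $W_{t_{n+1}^N}-W_{t_n^N}$ is centered Gaussian with variance $T/N$; a standard Gaussian tail bound gives $\mathbb{P}[|W_{t_{n+1}^N}-W_{t_n^N}|>N^{-1/4}]\le 2\exp(-\tfrac{N^{1/2}}{2T/N})=2\exp(-\tfrac{N^{3/2}}{2T})$, and a union bound over the $N$ increments yields $\mathbb{P}[B_N]\le 2N\exp(-N^{3/2}/(2T))$, which decays faster than any polynomial and in particular satisfies $\sup_N N^4\,\mathbb{P}[B_N]<\infty$.

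For $A_N$, I would use Markov's inequality at a high moment together with Lemma~\ref{lem_sup}: for any $p\in[1,\infty)$,
\begin{equation*}
  \mathbb{P}[A_N]
  \le
  \frac{1}{(r_N)^p}\,
  \mathbb{E}\!\left[
    \sup_{v,w\in\{0,\ldots,N\}}\left|D_{v,w}^{N,1}\right|^p
  \right]
  \le
  \frac{C_p}{(r_N)^p},
\end{equation*}
where $C_p:=\sup_{N\in\mathbb{N}}\mathbb{E}[\sup_{v,w}|D_{v,w}^{N,1}|^p]<\infty$ by Lemma~\ref{lem_sup}. It then remains to show that $r_N$ grows at least like a positive power of $N$, so that choosing $p$ large enough beats $N^4$. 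From \eqref{eq:defrN}, $r_N$ is the minimum of $N^{1/4}/L$ and $[\max(0,\tfrac{N}{T(\sup_{|s|\le1}|\mu'(s)|+3L)}-1)]^{1/(\delta-1)}$; the first term is $\sim N^{1/4}$ and, since $\delta\in(1,\infty)$ is fixed, the second term grows like $N^{1/(\delta-1)}$, a fixed positive power of $N$ for all large $N$. Hence there exist $c>0$ and $\theta:=\min(1/4,1/(\delta-1))>0$ with $r_N\ge c\,N^{\theta}$ for all $N$ sufficiently large, so picking $p>\,4/\theta$ gives $\sup_N N^4\,\mathbb{P}[A_N]\le\sup_N N^4\,C_p\,c^{-p}N^{-p\theta}<\infty$. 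Combining the two bounds gives $\sup_N N^4\,\mathbb{P}[(\Omega_N)^c]<\infty$.

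For $\mathbb{P}[\tilde\Omega]=1$, recall from \eqref{deftildeomega} that $\tilde\Omega$ is the intersection of $\Omega^{(1)}:=\bigcup_{N}\bigcap_{M\ge N}\bigcap_{m=1}^{M^2}\Omega_M^m$ and $\Omega^{(2)}:=\bigcap_{\varepsilon>0}\{\omega: \sup_N N^{-(1+\varepsilon)}|\sum_{m=1}^{N^2}(\mathbbm{1}_{\Omega_N^m}f(Y_N^{N,m})-\mathbb{E}[\mathbbm{1}_{\Omega_N}f(Y_N^N)])|<\infty\}$. For $\Omega^{(1)}$, since the $\Omega_M^m$ over $m$ are identically distributed copies of $\Omega_M$, a union bound gives $\mathbb{P}[(\bigcap_{m=1}^{M^2}\Omega_M^m)^c]\le M^2\,\mathbb{P}[(\Omega_M)^c]\le M^2\cdot K M^{-4}=K M^{-2}$, which is summable in $M$; Borel--Cantelli then yields $\mathbb{P}[\bigcap_{M\ge N}\bigcap_{m=1}^{M^2}\Omega_M^m]\to1$ as $N\to\infty$, hence $\mathbb{P}[\Omega^{(1)}]=1$. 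For $\Omega^{(2)}$, for fixed $\varepsilon$ the summands $Z_m:=\mathbbm{1}_{\Omega_N^m}f(Y_N^{N,m})-\mathbb{E}[\mathbbm{1}_{\Omega_N}f(Y_N^N)]$ are, for each $N$, i.i.d.\ centered with variance bounded uniformly in $N$ (using the polynomial growth of $f$ in \eqref{eq:polynomial_growth} and Corollary~\ref{c:mom_Euler}/Lemma~\ref{lem_sup} to control $\mathbb{E}[\mathbbm{1}_{\Omega_N^m}|f(Y_N^{N,m})|^2]$ uniformly); a Chebyshev estimate $\mathbb{P}[|\sum_{m=1}^{N^2}Z_m|> N^{1+\varepsilon}]\le N^2\,\mathrm{Var}(Z_1)/N^{2+2\varepsilon}= \mathrm{Var}(Z_1)\,N^{-2\varepsilon}$ is not summable for small $\varepsilon$, so instead I would pass to a higher even moment: bounding $\mathbb{E}|\sum_{m=1}^{N^2}Z_m|^{2q}\le \tilde K_q N^{2q}$ (by the Marcinkiewicz--Zygmund / Rosenthal inequality, or by the Burkholder--Davis--Gundy-type Lemma~\ref{discretedavis} applied to the martingale of partial sums together with uniform $L^{2q}$-bounds on $Z_1$) gives $\mathbb{P}[|\sum_{m=1}^{N^2}Z_m|>N^{1+\varepsilon}]\le \tilde K_q N^{-2q\varepsilon}$, which is summable once $q>1/(2\varepsilon)$; Borel--Cantelli then shows the $\sup_N$ is finite almost surely for each fixed $\varepsilon$, and intersecting over a countable sequence $\varepsilon\downarrow0$ (which suffices by monotonicity in $\varepsilon$) gives $\mathbb{P}[\Omega^{(2)}]=1$. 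Therefore $\mathbb{P}[\tilde\Omega]=\mathbb{P}[\Omega^{(1)}\cap\Omega^{(2)}]=1$. The main obstacle I anticipate is the bookkeeping in the $\Omega^{(2)}$ part: one must verify that the moment bounds on the summands $Z_m$ are genuinely uniform in $N$, which relies on correctly combining the polynomial growth of $f$ with the uniform moment bound of the dominator from Lemma~\ref{lem_sup} restricted to the events $\Omega_N^m$ — the rest is routine Gaussian tail estimates, Markov's inequality, and Borel--Cantelli.
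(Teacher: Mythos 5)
Your proposal follows essentially the same decomposition and argument as the paper's proof: a union bound and a Gaussian tail estimate control the Brownian-increment event, Markov's inequality at a large power combined with Lemma~\ref{lem_sup} controls the dominator event using the polynomial lower growth of $r_N$, and the full-measure claim is obtained from $\sup_N N^4\,\mathbb{P}[(\Omega_N)^c]<\infty$ via a union bound and Borel--Cantelli for the first intersection in \eqref{deftildeomega} and via a Burkholder--Davis--Gundy/Rosenthal-type moment bound (Lemma~\ref{discretedavis}) together with Corollary~\ref{c:mom_Euler} for the second. Two small slips worth fixing but harmless to the conclusion: your Gaussian tail exponent should read $N^{1/2}/(2T)$, not $N^{3/2}/(2T)$, since $(N^{-1/4})^2=N^{-1/2}$; and for small $N$ where $r_N$ may vanish, either replace $r_N^p$ by $(1+r_N)^p$ in Markov's inequality, as the paper does, or simply invoke $\mathbb{P}[A_N]\le 1$ for finitely many $N$.
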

%
%
%
%
Using the above lemmas, we establish the following
modification of numerical weak convergence.
\begin{lemma}[Modified weak convergence]\label{lemm_ew}
Let $ X \colon [0,T] \times \Omega \rightarrow \mathbb{R} $, 
$ \Omega_N \in \mathcal{F} $
and
$ Y_n^N \colon \Omega \rightarrow \mathbb{R} $ 
for $ n \in \{0,1,\ldots,N\} $ and $ N \in \mathbb{N} $
be given by
\eqref{x_def},
\eqref{omega_def}
and
\eqref{y_def}.
Then we obtain that
\begin{equation}
  \sup_{ N \in \mathbb{N} }
  \left(
    N
    \cdot
    \left|
      \mathbb{E}\bigg[ \mathbbm{1}_{\Omega_N} 
        \cdot f(X_T) \bigg]
      - \mathbb{E}\bigg[ \mathbbm{1}_{\Omega_N} 
        \cdot f(Y_N^N) \bigg]
    \right|
  \right) < \infty
\end{equation}
holds.
\end{lemma}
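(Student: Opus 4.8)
The plan is to mimic the classical first-order weak convergence proof for the Euler scheme in the globally Lipschitz case (as in Talay--Tubaro or Kloeden--Platen), but with every global moment bound on the Euler approximation replaced by the \emph{restricted} moment bound from Corollary~\ref{c:mom_Euler}, and with the indicator $\mathbbm{1}_{\Omega_N}$ carried through all estimates. First I would introduce the Kolmogorov backward function $u(t,x) := \mathbb{E}[f(X_T^{t,x})]$, where $X^{t,x}$ denotes the solution of the SDE started at $x$ at time $t$. Since $\mu,\sigma,f$ are four times continuously differentiable with polynomially growing derivatives (and $\mu$ is one-sided Lipschitz, $\sigma$ Lipschitz), standard SDE regularity theory gives that $u$ solves the backward Kolmogorov PDE $\partial_t u + \mu\, \partial_x u + \tfrac12 \sigma^2 \partial_{xx} u = 0$ with $u(T,\cdot)=f$, and that $u$ together with its spatial derivatives up to order four is polynomially bounded in $x$, uniformly in $t\in[0,T]$. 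This is the analytic input; I would cite it from the standard references rather than prove it.

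Next I would write the telescoping decomposition of the error along the Euler grid. Set $t_n^N = nT/N$. Then
\begin{equation}
  \mathbb{E}\!\left[\mathbbm{1}_{\Omega_N} f(Y_N^N)\right] - \mathbb{E}\!\left[\mathbbm{1}_{\Omega_N} f(X_T)\right]
  = \sum_{n=0}^{N-1} \mathbb{E}\!\left[\mathbbm{1}_{\Omega_N}\Bigl( u\bigl(t_{n+1}^N, Y_{n+1}^N\bigr) - u\bigl(t_n^N, Y_n^N\bigr)\Bigr)\right]
\end{equation}
up to the boundary terms $u(T,Y_N^N)=f(Y_N^N)$ and $u(0,Y_0^N)$ versus $\mathbb{E}[f(X_T)]$, which match because $u(0,\xi^{(1)})=\mathbb{E}[f(X_T)\mid\mathcal{F}_0]$ has expectation $\mathbb{E}[f(X_T)]$; here one must be slightly careful since $\mathbbm{1}_{\Omega_N}$ is not $\mathcal{F}_0$-measurable, so I would instead compare against $\mathbb{E}[f(X_T)]$ directly and absorb the mismatch using $\mathbb{P}[(\Omega_N)^c]\le C N^{-4}$ from Lemma~\ref{lemm_omega} together with the finite moments of $f(X_T)$. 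For each summand I Taylor-expand $u(t_{n+1}^N,\cdot)$ around $Y_n^N$ to fourth order, substitute the Euler increment $Y_{n+1}^N - Y_n^N = \tfrac{T}{N}\mu(Y_n^N) + \sigma(Y_n^N)\Delta W_n$, and use the PDE satisfied by $u$ to cancel the leading-order terms exactly as in the classical argument. The conditional expectations of odd powers of $\Delta W_n$ vanish; the surviving terms are of order $(T/N)^2$ in the step size, multiplied by derivatives of $u$ and polynomials in $Y_n^N$ (and $\Delta W_n$).

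The key point — and this is where the restriction to $\Omega_N$ is essential — is to bound $\mathbb{E}[\mathbbm{1}_{\Omega_N}\, |Y_n^N|^q \,|\Delta W_n|^r]$ for the various moderate exponents $q,r$ that appear. Here I would use that on $\Omega_{N,n}\supseteq\Omega_N$ one has $|Y_n^N|\le D_{\tau_n^N,n}^{N,1}\le r_N$ (by Lemma~\ref{lemm_1} and the definition of $\Omega_{N,n}$), so in particular $\mathbbm{1}_{\Omega_N}|Y_n^N|^q \le (\sup_{v,w}|D_{v,w}^{N,1}|)^q$, whose expectation is bounded uniformly in $N$ by Lemma~\ref{lem_sup}; combined with Hölder's inequality and the finite Gaussian moments of $\Delta W_n$ (which scale like $(T/N)^{r/2}$), each summand is $O(N^{-2})$ uniformly, and the sum over $n\in\{0,\dots,N-1\}$ is $O(N^{-1})$. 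One subtlety is that $\mathbbm{1}_{\Omega_N}$ depends on the whole Brownian path, so it does not factor out of conditional expectations given $\mathcal{F}_{t_n^N}$; I expect this to be the main obstacle. I would handle it by replacing $\Omega_N$ in each step by the larger, $\mathcal{F}_{t_{n+1}^N}$-measurable event $\Omega_{N,n+1}$ (note $\Omega_N=\Omega_{N,N}\subseteq\Omega_{N,n+1}$), pay the difference $\mathbb{P}[\Omega_{N,n+1}\setminus\Omega_N]\le\mathbb{P}[(\Omega_N)^c]\le CN^{-4}$ times polynomially-bounded moments, and then exploit that $\mathbbm{1}_{\Omega_{N,n+1}}$ together with the derivatives of $u$ evaluated at $Y_n^N$ is $\mathcal{F}_{t_n^N}$-measurable enough, or more robustly, estimate each summand crudely by Hölder without conditioning, using only the uniform $L^p$ bound on $\sup_{v,w}|D^{N,1}_{v,w}|$ and the $L^p$ bounds on the Taylor remainder in integral form. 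Since only boundedness (not a sharp constant) is needed for $\sup_N N\cdot|\cdots|<\infty$, the cruder route suffices. Summing the $O(N^{-2})$ per-step bounds over $N$ steps and adding the $O(N^{-4})$ boundary corrections yields $|\mathbb{E}[\mathbbm{1}_{\Omega_N}f(X_T)] - \mathbb{E}[\mathbbm{1}_{\Omega_N}f(Y_N^N)]| \le C N^{-1}$ uniformly in $N$, which is the claim.
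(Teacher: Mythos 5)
Your overall architecture coincides with the paper's: introduce the Kolmogorov backward function $u(t,x)=\mathbb{E}\bigl[f(X_T^{t,x})\bigr]$ with polynomially bounded spatial derivatives, telescope the weak error along the Euler grid, expand each step (Taylor or It\^o, they are interchangeable here), and control the expansion with the restricted moment bounds from Corollary~\ref{c:mom_Euler} and Lemma~\ref{lem_sup} so that each of the $N$ steps contributes $O(N^{-2})$. You also correctly put your finger on the decisive obstacle: the indicator $\mathbbm{1}_{\Omega_N}$, and even the larger $\mathbbm{1}_{\Omega_{N,n+1}}$ you substitute for it, is not $\mathcal{F}_{t_n^N}$-measurable, so the conditional-expectation cancellation of odd powers of $\Delta W_n$ --- which is the heart of any first-order weak convergence argument --- cannot be invoked directly.

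However, both of the resolutions you propose for this obstacle fail. The event $\Omega_{N,n+1}$ involves the dominator $D^{N,1}_{v,w}$ up to index $n+1$ and the Brownian increment over $[t_n^N,t_{n+1}^N]$, so it is $\mathcal{F}_{t_{n+1}^N}$-measurable but genuinely not $\mathcal{F}_{t_n^N}$-measurable; the appeal to its being ``$\mathcal{F}_{t_n^N}$-measurable enough'' is not a valid step. And the ``cruder route'' --- H\"older without conditioning --- is not merely lossy but wrong: the first-order term $u_1(t_{n+1}^N,Y_n^N)\,\sigma(Y_n^N)\,\Delta W_n$ has restricted $L^1$-norm of order $N^{-1/2}$ per step, and without cancellation these $N$ contributions sum to order $N^{1/2}$, which diverges. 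The missing ingredient, which is exactly what the paper supplies, is the decomposition $\mathbbm{1}_{\Omega_{N,n+1}}=1-\mathbbm{1}_{(\Omega_{N,n+1})^c}$ in front of each stochastic integral (after inserting the harmless $\mathcal{F}_{t_n^N}$-measurable factor $\mathbbm{1}_{\Omega_{N,n}}$ inside, using $\Omega_{N,n+1}\subset\Omega_{N,n}$). The ``$1$'' piece vanishes because every stochastic integral has zero mean, and the complement piece is bounded by Cauchy--Schwarz and the It\^o isometry at the cost of $\mathbb{P}[(\Omega_{N,n+1})^c]^{1/2}\le\mathbb{P}[(\Omega_N)^c]^{1/2}\le C N^{-2}$ from Lemma~\ref{lemm_omega}, which against the $O(N^{-1/2})$ $L^2$-norm of the stochastic integral yields $O(N^{-5/2})$, comfortably summable. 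Without this $1-\mathbbm{1}_{\mathrm{complement}}$ device your telescoping sum cannot be closed, and the plan as written does not establish the lemma.
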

While the proof
of Lemma~\ref{discretedavis} 
can be found in 
Theorem~6.3.10 in \cite{Stroock},
the proofs of
Lemma \ref{lemm_1}, Lemma \ref{lem_sup},
Lemma \ref{lemm_omega} and Lemma \ref{lemm_ew} 
are given in 
Sections~\ref{sec:proofthm1}-\ref{sec:modifiedweak}
below.
\subsection{Proof of
Theorem~\ref{thm:main_result}}
\label{sec:proofthm1}
%
%
%
%
%
%
\begin{proof}[Proof of Theorem \ref{thm:main_result}]
Consider the
$ \mathcal{F} / \mathcal{B} \left( [0, \infty) \right) $-mea\-su\-ra\-ble mapping
$ Z \colon \Omega \rightarrow [0, \infty) $ given by
\begin{equation}\label{eq_nr1}
  Z(\omega) :=
  \sup_{N\in\mathbb{N}} 
  \left( \frac{
    \mathbbm{1}_{\tilde{\Omega}}(\omega)}{N} \left(
    \sum_{m=1}^{N^2} \left| f (Y_N^{N,m} (\omega) ) 
    \right| \right)
    \left(1 - \mathbbm{1}_{
      \left( \cap_{M=N}^{\infty} 
      \cap_{m=1}^{M^2} \Omega_{M}^{m}
      \right)
    }   (\omega) \right)
  \right) 
\end{equation}
for every $ \omega \in \Omega $.
Finiteness in~\eqref{eq_nr1} follows from
\begin{equation}
  \lim_{N \rightarrow \infty}
  \left(
      \mathbbm{1}_{ 
        \left( 
        \cap_{M=N}^{\infty}
        \cap^{M^2}_{m=1} 
        \Omega_{M}^m
        \right)
      }(\omega) 
  \right)
  =
      \mathbbm{1}_{ 
        \left(
        \cup_{N \in \mathbb{N}} 
        \cap_{M=N}^{\infty}
        \cap^{M^2}_{m=1} 
        \Omega_{M}^m
        \right)
      }(\omega) 
  \geq
    \mathbbm{1}_{
      \tilde{\Omega}
    }(\omega)	
  = 1
\end{equation}
for every $ \omega \in \tilde{\Omega} $.
Moreover, define $ \mathcal{F} $/$ \mathcal{B}([0,\infty))
$-measurable mappings $ R_{\varepsilon} \colon \Omega \rightarrow [0, \infty) $,
$ \varepsilon \in (0,1) $, 
by
\begin{equation}\label{eq_nr2}
  R_{\varepsilon} (\omega) :=
  \sup_{N\in\mathbb{N}} \left( 
  \mathbbm{1}_{\tilde{\Omega}} (\omega)
  \frac{ \left|
    \sum_{m=1}^{N^2} \left(
      \mathbbm{1}_{\Omega_N^m}(\omega) 
      \cdot f( Y_N^{N,m}(\omega) ) -
      \mathbb{E} \left[ \mathbbm{1}_{\Omega_N} \cdot
      f (Y_N^N) \right] \right) \right|
  }{N^{(1+\varepsilon)}}
  \right)
\end{equation}
for every $ \omega \in \Omega $
and every $ \varepsilon \in (0,1) $.
By definition of $ \tilde{\Omega} $, the
mappings 
$ R_{\varepsilon} \colon \Omega
\rightarrow [0,\infty) 
$, 
$ \varepsilon \in (0,1) $,
are also finite.
Additionally, let the real number $ C \in [0,\infty) $ 
be given by
\begin{equation}  \begin{split}\label{eq_nr3}
  C :=
  \sup_{ N \in \mathbb{N} }
  \left(
    N
    \cdot
    \left|
      \mathbb{E}\bigg[ \mathbbm{1}_{\Omega_N} 
        \cdot f(X_T) \bigg]
      - \mathbb{E}\bigg[ \mathbbm{1}_{\Omega_N} 
        \cdot f(Y_N^N) \bigg]
    \right|
  \right)
  +
  \sup_{ N \in \mathbb{N} }
  \left(
    N^2
    \cdot
    \mathbb{P}\bigg[
      \left( \Omega_N \right)^c
    \bigg]
  \right) ,
\end{split}     \end{equation}
which is finite due to Lemma \ref{lemm_omega} and Lemma \ref{lemm_ew}.
Moreover, we have
\begin{align*}
  &\left| \mathbb{E}\bigg[ f (X_T) \bigg] -
    \frac{1}{N^2} \left(
      \sum_{m=1}^{N^2} f \left(Y_N^{N,m} (\omega) \right)
    \right)
  \right| 
  \\ & \leq \left|
    \mathbb{E}\bigg[ f (X_T) \bigg] -
    \mathbb{E}\bigg[ \mathbbm{1}_{\Omega_N} \cdot f (X_T) \bigg]
  \right|
  + \left|
    \mathbb{E}\bigg[ \mathbbm{1}_{\Omega_N} \cdot f (X_T) \bigg] -
    \mathbb{E}\bigg[ \mathbbm{1}_{\Omega_N} \cdot f (Y_N^N) \bigg]
  \right|
  \\ & \quad + \left|
    \mathbb{E}\bigg[\mathbbm{1}_{\Omega_N} \cdot f (Y_N^N)\bigg] -
    \frac{1}{N^2} \left(
      \sum_{m=1}^{N^2} f \left(Y_N^{N,m} (\omega) \right) \cdot \mathbbm{1}_{\Omega_N^m} (\omega)
    \right)
  \right|
  \\ & \quad + \left|
    \frac{1}{N^2} \left(
      \sum_{m=1}^{N^2} f \left(Y_N^{N,m} (\omega) \right) \cdot \mathbbm{1}_{\Omega_N^m} (\omega)
    \right) -
    \frac{1}{N^2} \left(
      \sum_{m=1}^{N^2} f \left(Y_N^{N,m} (\omega) \right)
    \right)
  \right|
\end{align*}
for every $ \omega \in \tilde{\Omega} $ and every
$ N \in \mathbb{N} $.
Using~\eqref{eq_nr3}, we then obtain
\begin{align*}
  &\left| \mathbb{E}\bigg[ f (X_T) \bigg] -
    \frac{1}{N^2} \left(
      \sum_{m=1}^{N^2} f \left(Y_N^{N,m} (\omega) \right)
    \right)
  \right| 
  \\ & \leq \left|
    \mathbb{E}\bigg[ \bigg(1 - \mathbbm{1}_{\Omega_N} \bigg) \cdot f (X_T) \bigg]
  \right| + C \cdot \frac{1}{N} 
  \\ & \quad + \frac{1}{N^2} \left|
    \sum_{m=1}^{N^2} \left(
      f \left( Y_N^{N,m} \left(\omega\right) \right)
    \cdot \mathbbm{1}_{\Omega_N^m} \left(\omega\right) - 
    \mathbb{E}\bigg[ 
      \mathbbm{1}_{\Omega_N} \cdot f \left( Y_N^{N} \right)
    \bigg] \right)
  \right|
  \\ & \quad + \frac{1}{N^2} \left(
    \sum_{m=1}^{N^2} \left| f \left( Y_N^{N,m} (\omega) \right) \right| \cdot \left( 1-\mathbbm{1}_{\Omega_N^m} (\omega) \right)
  \right)
\end{align*}
and
\begin{equation}  \begin{split}
  &\left| \mathbb{E}\bigg[ f (X_T) \bigg] -
    \frac{1}{N^2} \left(
      \sum_{m=1}^{N^2} f \left(Y_N^{N,m} (\omega) \right)
    \right)
  \right| 
  \\ & \leq \left|
    \mathbb{E}\bigg[ \mathbbm{1}_{(\Omega_N)^c} \cdot f (X_T) \bigg]
  \right| + C \cdot \frac{1}{N} 
  \\ & \quad + \frac{N^{\left( \varepsilon - 1 \right) }}{N^{\left( 1 + \varepsilon \right) }} \left|
    \sum_{m=1}^{N^2} \left( \mathbbm{1}_{\Omega_N^m} \left(\omega\right) \cdot 
      f \left( Y_N^{N,m} \left(\omega\right) \right) - 
    \mathbb{E}\bigg[ 
      \mathbbm{1}_{\Omega_N} \cdot f \left( Y_N^{N} \right)
    \bigg] \right)
  \right|
  \\ & \quad + \frac{1}{N^2} \left(
    \sum_{m=1}^{N^2} \left| f \left( Y_N^{N,m} (\omega) \right) \right|
  \right) \left(
    \sup_{m \in \left\{ 1,2,\ldots,N^2 \right\} } 
  \left( 1-\mathbbm{1}_{\Omega_N^m} (\omega) \right)
  \right)
\end{split}     \end{equation}
for every $ \omega \in \tilde{\Omega} $,
$ N \in \mathbb{N} $ and every $ \varepsilon \in (0,1) $.
Hence, we get
\begin{align*}
  &\left| \mathbb{E}\bigg[ f (X_T) \bigg] -
    \frac{1}{N^2} \left(
      \sum_{m=1}^{N^2} f \left(Y_N^{N,m} (\omega) \right)
    \right)
  \right|
  \\ & \leq
    \mathbb{E}\bigg[ \mathbbm{1}_{(\Omega_N)^c} \cdot \left| f(X_T) \right| \bigg]  + C \cdot \frac{1}{N}
  \\ & \quad + \left( \frac{\left|
    \sum_{m=1}^{N^2} \left( \mathbbm{1}_{\Omega_N^m} \left(\omega\right) \cdot 
      f \left( Y_N^{N,m} \left(\omega\right) \right) - 
    \mathbb{E}\left[ 
      \mathbbm{1}_{\Omega_N} \cdot f \left( Y_N^{N} \right)
    \right] \right)
    \right|}{N^{\left( 1 + \varepsilon \right)}}
  \right) \cdot N^{\left( \varepsilon - 1 \right) }
  \\ & \quad + \frac{1}{N^2} \left(
    \sum_{m=1}^{N^2} \left| f \left( Y_N^{N,m} (\omega) \right) \right|
  \right) \left(
    \sup_{m \in \left\{ 1,2,\ldots,N^2 \right\} } 
  \left( 1-\mathbbm{1}_{\Omega_N^m} (\omega) \right)
  \right)
\end{align*}
and 
\begin{equation}  \begin{split}
  \lefteqn{
  \left| \mathbb{E}\bigg[ f (X_T) \bigg] -
    \frac{1}{N^2} \left(
      \sum_{m=1}^{N^2} f \left(Y_N^{N,m} (\omega) \right)
    \right)
  \right|
  }
  \\ & \leq \left(
    \mathbb{E}\bigg[ \mathbbm{1}_{(\Omega_N)^c} \bigg]
  \right)^{\frac{1}{2}} \cdot \left(
    \mathbb{E}\bigg[ \big| f(X_T) \big|^2 \bigg]
  \right)^{\frac{1}{2}} + C \cdot \frac{1}{N} + 
  R_{\varepsilon}(\omega)
   \cdot N^{\left( \varepsilon - 1 \right) }
  \\ & \quad + \frac{1}{N^2} \left(
    \sum_{m=1}^{N^2} \left| f\left( 
      Y_N^{N,m}(\omega) \right) \right|
  \right) \left(
    1 - 
    \inf_{m \in \left\{ 1,2,\ldots,N^2 \right\} } 
    \mathbbm{1}_{\Omega_N^m} (\omega)
  \right)
\end{split}     \end{equation}
for every $ \omega \in \tilde{\Omega} $,
$ N \in \mathbb{N} $ and every $ \varepsilon \in (0,1) $
by using H\"older's inequality and 
the definition~\eqref{eq_nr2}
of 
$R_\varepsilon\colon\Omega\to[0,\infty)$,
$ \varepsilon \in (0,1) $.
Therefore, the polynomial growth assumption~\eqref{eq:polynomial_growth}
implies
\begin{align*}
  &\left| \mathbb{E}\bigg[ f (X_T) \bigg] -
    \frac{1}{N^2} \left(
      \sum_{m=1}^{N^2} f \left(Y_N^{N,m} (\omega) \right)
    \right)
  \right|
  \\ & \leq \left(
    \mathbb{P}\bigg[ (\Omega_N)^c \bigg]
  \right)^{\frac{1}{2}} \cdot \left(
    \mathbb{E}\bigg[ L^2 \left( 1+ |X_T|^{\delta} \right)^2 \bigg]
  \right)^{\frac{1}{2}} + C \cdot \frac{1}{N} + 
  R_{\varepsilon}(\omega)
   \cdot N^{\left( \varepsilon - 1 \right) }
  \\ & \quad + \frac{1}{N^2} \left(
    \sum_{m=1}^{N^2} \left| f \left( Y_N^{N,m} (\omega) \right) \right|
  \right) \left(
    1 - \mathbbm{1}_{
      \left( \cap_{m=1}^{N^2} \Omega_N^m 
      \right) 
    }(\omega)
  \right)
  \\ & \leq \left(
    \mathbb{P}\bigg[ (\Omega_N)^c \bigg]
  \right)^{\frac{1}{2}} \cdot L \cdot \left(
    \mathbb{E}\bigg[ 2 \left( 1+ |X_T|^{2\delta} \right) \bigg]
  \right)^{\frac{1}{2}} 
  + C \cdot \frac{1}{N} 
  + 
  R_{\varepsilon}(\omega)
   \cdot N^{\left( \varepsilon - 1 \right) }
  \\ & \quad + \left( \frac{1}{N} \left(
    \sum_{m=1}^{N^2} \left| f \left( Y_N^{N,m} (\omega) \right) \right| \right)
    \left(
    1 - \mathbbm{1}_{\left( \cap_{m=1}^{N^2} \Omega_N^m \right)} (\omega)
    \right)
  \right) \cdot N^{-1}
\end{align*}
for every $ \omega \in \tilde{\Omega} $,
$ N \in \mathbb{N} $ and every $ \varepsilon \in (0,1) $.
Furthermore, we have
\begin{align*}
  &\left| \mathbb{E}\bigg[ f (X_T) \bigg] -
    \frac{1}{N^2} \left(
      \sum_{m=1}^{N^2} f \left(Y_N^{N,m} (\omega) \right)
    \right)
  \right|
  \\ & \leq \left(
    \mathbb{P}\bigg[ (\Omega_N)^c \bigg]
    \right)^{\frac{1}{2}} \cdot 2L \cdot \left(
    1 + \mathbb{E}\bigg[ |X_T|^{2\delta} \bigg]
    \right)^{\frac{1}{2}} + C \cdot \frac{1}{N} + 
    R_{\varepsilon}(\omega)
    \cdot N^{\left( \varepsilon - 1 \right) }
  \\ & \quad + \left( \frac{1}{N} \left(
    \sum_{m=1}^{N^2} \left| f \left( Y_N^{N,m} (\omega) \right) \right| \right)
    \left(
    1 - \mathbbm{1}_{\left( \cap_{M=N}^{\infty} \cap_{m=1}^{M^2} \Omega_M^m \right)} (\omega)
    \right)
  \right) \cdot N^{-1}
\end{align*}
for every $ \omega \in \tilde{\Omega} $,
$ N \in \mathbb{N} $ and 
every $ \varepsilon \in (0,1) $.
Using the definition~\eqref{eq_nr1} of 
$Z\colon\Omega\to[0,\infty)$
then yields
\begin{align*}
  &\left| \mathbb{E}\bigg[ f (X_T) \bigg] -
    \frac{1}{N^2} \left(
      \sum_{m=1}^{N^2} f \left(Y_N^{N,m} (\omega) \right)
    \right)
  \right|
  \\ & \leq 2L \cdot \left(
    \mathbb{P}\bigg[ (\Omega_N)^c \bigg]
  \right)^{\frac{1}{2}} \cdot \left(
    1 + \mathbb{E}\bigg[ |X_T|^{2\delta} \bigg]
  \right)^{\frac{1}{2}} + C \cdot \frac{1}{N} + 
  R_{\varepsilon}(\omega)
   \cdot N^{\left( \varepsilon - 1 \right) }
  + Z (\omega) \cdot N^{-1}
  \\ & \leq 2L \cdot \left(
    \mathbb{P}\bigg[ (\Omega_N)^c \bigg]
  \right)^{\frac{1}{2}} \cdot \left(
    1 + \mathbb{E}\bigg[ |X_T|^{2\delta} \bigg]
  \right)^{\frac{1}{2}} + C \cdot N^{\left( \varepsilon - 1 \right) } + 
  R_{\varepsilon}(\omega)
   \cdot N^{\left( \varepsilon - 1 \right) }
  + Z (\omega) \cdot N^{(\varepsilon - 1)}
\end{align*}
and finally
\begin{align*}
  &\left| \mathbb{E}\bigg[ f (X_T) \bigg] -
    \frac{1}{N^2} \left(
      \sum_{m=1}^{N^2} f \left(Y_N^{N,m} (\omega) \right)
    \right)
  \right|
  \\ 
  &\leq 
  2 L \cdot \left( \mathbb{P} \bigg[
    \left( \Omega_N \right)^c 
  \bigg] \right)^{\frac{1}{2}} \cdot
  \left( 1 + \mathbb{E} \bigg[
    \left| X_T \right|^{2\delta}
  \bigg] \right)
  + \bigg( C + R_{\varepsilon}(\omega) + Z(\omega) \bigg) \cdot N^{(\varepsilon-1)}
\\
  &\leq 
  2 L \sqrt{C} N^{-1}
  \left(
    1 + 
    \mathbb{E}\left[ \left| X_T \right|^{2 \delta} \right]
  \right)
  + \left(
    C + R_{\varepsilon}(\omega) + Z(\omega)
  \right)
  \cdot N^{(\varepsilon - 1)}
\\
  &\leq
  \left(
    2 L \sqrt{C}
    \left(
      1 + 
      \mathbb{E}\left[ \left| X_T \right|^{2 \delta} \right]
    \right)
    + 
    C + R_{\varepsilon}(\omega) + Z(\omega)
  \right)
  \cdot N^{(\varepsilon - 1)}
\end{align*}
for every $ \omega \in \tilde{\Omega} $,
$ N \in \mathbb{N} $ and 
every $ \varepsilon \in (0,1) $ due to the
definition~\eqref{eq_nr3} of $C$.
The right-hand side is finite
according to Theorem 2.6.4 
in \cite{GS72}.
This completes the proof
of Theorem \ref{thm:main_result}.
\end{proof}
%
%
\subsection{Proof of Lemma \ref{lemm_1}}
\begin{proof}[Proof of Lemma \ref{lemm_1}]
Roughly speaking, $r_N$ is chosen such that the
drift term alone cannot change the sign of the
$N$-th Euler approximation
as long as the $N$-th Euler approximation is bounded by 
$r_N$ where $ N \in \mathbb{N}$.
Now we formalize this and observe that
\begin{align*}
  &x \cdot \left(
    x + \frac{T}{N} \cdot \mu(x) - \frac{T\mu(0)}{N}
  \right)
=
  x^2 + \frac{T}{N} \cdot x \cdot \mu(x) 
  - x \cdot \frac{T\mu(0)}{N}
\\&\geq
  x^2 - \left| \frac{T}{N} \cdot x \cdot \mu(x) 
  - x \cdot \frac{T\mu(0)}{N} \right|
=
  x^2 - \frac{T}{N} \cdot \left|x\right| \cdot \left| \mu(x) - \mu(0) \right|
\\
  &=
  x^2 - \frac{T\left| x \right|}{N} \left(
    \mathbbm{1}_{[-1,1]}(x) \cdot \left| \mu(x) - \mu(0) \right|
    + 
      \mathbbm{1}_{ \mathbb{R}\setminus [-1,1] }(x)
      \cdot
      \left| \mu(x) - \mu(0) \right|
  \right)
\\
  &\geq
  x^2 - \frac{T\left|x\right|}{N} 
  \left(
    \mathbbm{1}_{[-1,1]}(x) \cdot \left| x \right| \cdot \left(
      \sup_{s \in [-1,1]} \left| \mu'(s) \right|
    \right)
    + 
      \mathbbm{1}_{ \mathbb{R}\setminus [-1,1] }(x)
      \cdot
      \left| \mu(x) - \mu(0) \right|
  \right)
\end{align*}
holds for every $ x \in \mathbb{R} $.
Moreover, using that $ \mu \colon \mathbb{R}
\rightarrow \mathbb{R} $ has at most polynomial growth
(see~\eqref{eq:polynomial_growth})
shows
\begin{align*}
  &x \cdot \left(
    x + \frac{T}{N} \cdot \mu(x) - \frac{T\mu(0)}{N}
  \right)
\\
  &\geq
  x^2 - \frac{T\left|x\right|}{N}
  \left(
    \mathbbm{1}_{[-1,1]}(x) \cdot \left| x \right| \cdot \left(
      \sup_{s \in [-1,1]} \left| \mu'(s) \right|
    \right)
    + 
    \mathbbm{1}_{ \mathbb{R}\setminus [-1,1] }(x)
    \cdot
    \left( L\left( 1 + \left| x \right|^{\delta} \right) 
    + L \right)
  \right)
\\
  &\geq
  x^2 - \frac{T\left|x\right|}{N} 
  \left(
    \mathbbm{1}_{[-1,1]}(x) \cdot \left| x \right| \cdot \left(
      \sup_{s \in [-1,1]} \left| \mu'(s) \right|
    \right)
    + 
    \mathbbm{1}_{ \mathbb{R}\setminus [-1,1] }(x)
    \cdot
    L \cdot \left( 2 + \left| x \right|^{\delta} \right)
  \right)
\end{align*}
and hence
\begin{align*}
  &x \cdot \left(
    x + \frac{T}{N} \cdot \mu(x) - \frac{T\mu(0)}{N}
  \right)
\\
  &\geq
  x^2 - \frac{T\left|x\right|}{N} \left(
    \mathbbm{1}_{[-1,1]}(x) \cdot \left| x \right| \cdot \left(
      \sup_{s \in [-1,1]} \left| \mu'(s) \right|
    \right)
    + 
    \mathbbm{1}_{ \mathbb{R}\setminus [-1,1] }(x)
    \cdot
    3L \cdot \left| x \right|^{\delta}
  \right)
\\
  &\geq
  x^2 - \frac{T\left|x\right|}{N} \left(
    \sup_{s \in [-1,1]} \left| \mu'(s) \right| + 3L
  \right)\left(
    \mathbbm{1}_{[-1,1]}(x) \cdot \left| x \right|
    + 
    \mathbbm{1}_{ \mathbb{R}\setminus [-1,1] }(x)
    \cdot
    \left| x \right|^{\delta}
  \right)
\\
  &\geq
  x^2 - \frac{T\left|x\right|}{N} \left(
    \sup_{s \in [-1,1]} \left| \mu'(s) \right| + 3L
  \right)\left(
    \left| x \right|
    + \left| x \right|^{\delta}
  \right)
\end{align*}
for every $ x \in \mathbb{R} $.
This implies that
\begin{equation}  \begin{split}
  x \cdot \left(
    x + \frac{T}{N} \cdot \mu(x) - \frac{T\mu(0)}{N}
  \right)
  &\geq
  x^2 - \frac{T}{N} \left(
    \sup_{s \in [-1,1]} \left| \mu'(s) \right| + 3L
  \right)\left(
    x^2
    + \left| x \right|^{\left(1+\delta\right)}
  \right)
\\
  &=
  x^2 \left(
    1 - \frac{T}{N} \left(
      \sup_{s \in [-1,1]} \left| \mu'(s) \right| + 3L
    \right)\left(
      1
      + \left| x \right|^{\left(\delta-1\right)}
    \right)
  \right)
\\
  &\geq
  x^2 \left(
    1 - \frac{T}{N} \left(
      \sup_{s \in [-1,1]} \left| \mu'(s) \right| + 3L
    \right)\left\{
      1
      + \left( r_N \right)^{\left(\delta-1\right)}
    \right\}
  \right)
\end{split}     \end{equation}
holds 
for every $x\in[-r_N,r_N]$.
Therefore, we finally obtain
\begin{equation}  \begin{split}
  \lefteqn{
  x \cdot \left(
    x + \frac{T}{N} \cdot \mu(x) - \frac{T\mu(0)}{N}
  \right)
  }
  \\
  &\geq
  x^2 \Bigg(
    1 - \frac{T}{N} \left(
      \sup_{s \in [-1,1]} \left| \mu'(s) \right| + 3L
    \right)
  \left\{
      1
      +
      \max\left( 0,
      \frac{N}{
      T \left(
        \sup_{s \in [-1,1]} \left| \mu'(s) \right| 
        + 3L
      \right)
      } 
      - 1
      \right)
    \right\}
  \Bigg)
\\
  &=
  x^2 \left(
    1 - \frac{T}{N} \left(
      \sup_{s \in [-1,1]} \left| \mu'(s) \right| + 3L
    \right)
      \frac{N}{
      T \left(
        \sup_{s \in [-1,1]} \left| \mu'(s) \right| 
        + 3L
      \right)
      } 
  \right)
\\
  &=
  x^2 \left(
    1 - 1
  \right)
  = 0.
\end{split}     \end{equation}
for every $x\in[-r_N,r_N]$.
Hence, we have shown that
\begin{equation}\label{eq:posflowRN}
    x + \frac{T}{N} \cdot \mu(x) - \frac{T\mu(0)}{N}
  \geq 0
\end{equation}
holds for all $ x \in [0, r_N] $ and that
\begin{equation}\label{eq:negflowRN}
    x + \frac{T}{N} \cdot \mu(x) - \frac{T\mu(0)}{N}
  \leq 0
\end{equation}
holds for all $ x \in [-r_N,0) $.
The estimates~\eqref{eq:posflowRN} and~\eqref{eq:negflowRN} give
us control on the effect of the drift function in one direction.
In addition, we need to bound the growth in the other direction.
The one-sided Lipschitz continuity
of $ \mu $ (see~\eqref{eq:onesided})
implies
$$
  x \cdot \left( \mu(x) - \mu(0) \right)
  \leq L \cdot x^2
$$
and hence
$$
  x \cdot \left( \mu(x) - L \cdot x - \mu(0) \right)
  \leq 0
$$
for every $ x \in \mathbb{R} $.
Therefore, we obtain
\begin{equation}\label{eq:muonesided1}
  \mu(x) - L\cdot x - \mu(0) \leq 0
\end{equation}
for every $ x \in [0,\infty) $
and 
\begin{equation}\label{eq:muonesided2}
  \mu(x) - L\cdot x - \mu(0) \geq 0
\end{equation}
for every $ x \in (-\infty,0) $.
With these inequalities at hand, we now establish~\eqref{lemmprof1}
by induction on $n\in\{0,1,\ldots,N\}$ where $N\in\mathbb{N}$ is
fixed.
First of all, we have $ \tau_0^N(\omega) = 0 $
and therefore
\begin{align*}
  \left| Y_0^N(\omega) \right|
  &=
  \left| \xi(\omega) \right|
  \leq
    T\left|\mu(0)\right| + \left| \sigma(0) \right| 
    + \left|\xi(\omega)\right|  + 1
\\
  &=
  e^{
    \left(
      \sum_{l=0}^{-1} \alpha_l^N(\omega)
    \right)
  } \left(
    T\left|\mu(0)\right| + \left| \sigma(0) \right| 
    + \left|\xi(\omega)\right|  + 1
  \right)
\\
  &=
  e^{
    \left(
      \sum_{l=\tau_0^N(\omega)}^{-1} \alpha_l^N(\omega)
    \right)
  } \left(
    T\left|\mu(0)\right| + \left| \sigma(0) \right| 
    + \left|\xi(\omega)\right|  + 1
  \right)
\\
  &\quad
  + \sum_{k=\tau_0^N(\omega)}^{-1}
  \sgn( Y_k^N(\omega) ) \,
  e^{
    \left(
      \sum_{l=k+1}^{-1} \alpha_l^N(\omega)
    \right)
  } \beta_k^N(\omega)
  = D_{\tau_0^N(\omega),0}^{N,1}(\omega)
\end{align*}
for every $ \omega \in \Omega_{N,0} $, 
which shows \eqref{lemmprof1} 
in the base case $ n=0 $.
Suppose now
that \eqref{lemmprof1} holds for one fixed
$ n \in \{0,1,2,\ldots,N-1\} $. 
Moreover, we fix an arbitrary
$ \omega \in \Omega_{N,n+1} \subset \Omega_{ N, n } $
and we now show \eqref{lemmprof1}
for $ \omega $ and $ n+1 $.
For this
we distinguish between the following 
four different cases.\\
1.) First of all
suppose that $ Y_n^N(\omega) \geq 0 $ and that $ Y_{n+1}^N(\omega) \geq 0 $ holds.
Then 
$ \tau_n^N(\omega) = \tau_{n+1}^N(\omega) $
and 
\begin{align*}
  0
  &\leq
  Y_{n+1}^N(\omega)
\\
  &=
  Y_n^N(\omega) + \frac{T}{N} \cdot \mu
  \left( 
    Y_n^N(\omega) 
  \right)
  + \sigma \left( Y_n^N(\omega) \right) \cdot
  \left( 
    W_{t_{n+1}^N}(\omega) - W_{t_{n}^N}(\omega)
  \right)
\\
  &=
  Y_n^N(\omega) + \frac{T}{N} \cdot
  \left(
    \mu
    \left( 
      Y_n^N(\omega) 
    \right)
    - L \cdot Y_n^N(\omega) -\mu(0)
  \right)
  + \frac{TL}{N} \cdot Y_n^N(\omega)
\\
  &\quad 
  + \frac{T\mu(0)}{N}
  + \left(
    \sigma \left( Y_n^N(\omega) \right) - \sigma(0)
  \right) \cdot \left(
    W_{t_{n+1}^N}(\omega) - W_{t_{n}^N}(\omega)
  \right)
  + \sigma(0) \cdot \left(
    W_{t_{n+1}^N}(\omega) - W_{t_{n}^N}(\omega)
  \right)
\end{align*}
and hence
\begin{multline*}
  0
  \leq
  Y_{n+1}^N(\omega)
 \leq
  Y_n^N(\omega)
  + \frac{TL}{N} \cdot Y_n^N(\omega) 
  + \frac{T\mu(0)}{N}
\\
  + \left(
    \sigma\!\left( \!Y_n^N(\omega)\! \right) - \sigma(0)
  \right) \cdot \left(
    W_{t_{n+1}^N}(\omega) - W_{t_{n}^N}(\omega)
  \right)
  + \sigma(0) \cdot \left(
    W_{t_{n+1}^N}(\omega) - W_{t_{n}^N}(\omega)
  \right)
\end{multline*}
due to \eqref{eq:muonesided1}.
Therefore, using $1+x\leq e^x$ for all
$ x \in \mathbb{R} $ yields
\begin{align*}
  0
  \leq
  Y_{n+1}^N(\omega)
&\leq
  Y_n^N(\omega) \cdot \left(
    1 + \frac{TL}{N} + \tilde{\sigma} \left( Y_n^N(\omega) \right) \cdot \left(
      W_{t_{n+1}^N}(\omega) - W_{t_{n}^N}(\omega)
    \right)
  \right)
\\
  &\quad
  + \left(
    \frac{T\mu(0)}{N} + \sigma(0) \cdot \left(
      W_{t_{n+1}^N}(\omega) - W_{t_{n}^N}(\omega)
    \right)
  \right)
\\
  &\leq
  Y_n^N(\omega) \cdot 
    e^{ \left(
      \frac{TL}{N} + \tilde{\sigma} \left( Y_n^N(\omega) \right) \cdot \left(
        W_{t_{n+1}^N}(\omega) - W_{t_{n}^N}(\omega)
      \right)
    \right) }
\\
  &\quad
  + \left(
    \frac{T\mu(0)}{N} + \sigma(0) \cdot \left(
      W_{t_{n+1}^N}(\omega) - W_{t_{n}^N}(\omega)
    \right)
  \right)
\\
  &=
  Y_n^N(\omega)
  \cdot
  e^{
    \alpha_n^N(\omega)
  } + \beta_n^N(\omega)
\\&=
  \left| Y_n^N(\omega) \right|
  \cdot e^{\alpha_n^N(\omega)}
  +
  \sgn( Y_n^N(\omega) )
  \cdot 
  \beta_n^N(\omega) .
\end{align*}
The induction hypothesis then implies
\begin{align*}
  0
  &\leq
  Y_{n+1}^N(\omega)
\leq
  D_{\tau^N_n(\omega),n}^{N,1}(\omega)
  \cdot
  e^{
    \alpha_n^N(\omega)
  } 
  + 
  \sgn( Y_n^N(\omega) )
  \cdot 
  \beta_n^N(\omega) 
\\
  &=
  e^{\left(
    \sum_{l=\tau^N_n(\omega)}^{n} \alpha_l^N(\omega)
  \right)}
  \left(
    T\left|\mu(0)\right| + \left| \sigma(0) \right| 
    + \left|\xi(\omega)\right|  + 1
  \right)
\\
  &\quad +
  \sum_{k=\tau^N_n(\omega)}^{n-1}
  \sgn( Y_k^N(\omega) ) \,
  e^{\left(
    \sum_{l=k+1}^{n} \alpha_l^N(\omega)
  \right)}
  \beta_k^N(\omega)
  + \sgn( Y_n^N(\omega) ) \cdot \beta_n^N(\omega) 
\end{align*}
and hence
\begin{equation}  \begin{split}
  \left| Y_{n+1}^N(\omega) \right|
&\leq
  e^{\left(
    \sum_{l=\tau^N_n(\omega)}^{n} \alpha_l^N(\omega)
  \right)} \left(
    T\left|\mu(0)\right| + \left| \sigma(0) \right| 
    + \left|\xi(\omega)\right|  + 1
  \right)
\\&\quad 
  +
  \sum_{k=\tau^N_n(\omega)}^n
  \sgn( Y_k^N(\omega) ) \,
  e^{\left(
    \sum_{l=k+1}^{n} \alpha_l^N(\omega)
  \right)}
  \beta_k^N(\omega)
\\&=
  D_{\tau^N_n(\omega),n+1}^{N,1}(\omega)
  =
  D_{\tau^N_{n+1}(\omega),n+1}^{N,1}(\omega) ,
\end{split}     \end{equation}
which shows that \eqref{lemmprof1} 
holds in this case for $ \omega $ and $ n+1 $.\\
2.) Suppose now that $ Y_n^N(\omega) < 0 $
and that $ Y_{n+1}^N(\omega) < 0 $
holds. Then we also have
$ \tau_n^N(\omega) = \tau_{n+1}^N(\omega) $
and
\begin{align*}
  0
  &>
  Y_{n+1}^N(\omega)
\\
  &=
  Y_n^N(\omega) + \frac{T}{N} \cdot \mu
  \left( 
    Y_n^N(\omega) 
  \right)
  + \sigma \left( Y_n^N(\omega) \right) \cdot
  \left( 
    W_{t_{n+1}^N}(\omega) - W_{t_{n}^N}(\omega)
  \right)
\\
  &=
  Y_n^N(\omega) + \frac{T}{N} \cdot
  \left(
    \mu
    \left( 
      Y_n^N(\omega) 
    \right)
    - L \cdot Y_n^N(\omega) -\mu(0)
  \right)
  + \frac{TL}{N} \cdot Y_n^N(\omega)
\\
  &\quad 
  + \frac{T\mu(0)}{N}
  + \left(
    \sigma \left( Y_n^N(\omega) \right) - \sigma(0)
  \right) \cdot \left(
    W_{t_{n+1}^N}(\omega) - W_{t_{n}^N}(\omega)
  \right)
  + \sigma(0) \cdot \left(
    W_{t_{n+1}^N}(\omega) - W_{t_{n}^N}(\omega)
  \right)
\end{align*}
and hence
\begin{multline*}
  0
  >
  Y_{n+1}^N(\omega)
 \geq
  Y_n^N(\omega)
  + \frac{TL}{N} \cdot Y_n^N(\omega) 
  + \frac{T\mu(0)}{N}
\\
  + \left(
    \sigma\!\left( \!Y_n^N(\omega)\! \right) - \sigma(0)
  \right) \cdot \left(
    W_{t_{n+1}^N}(\omega) - W_{t_{n}^N}(\omega)
  \right)
  + \sigma(0) \cdot \left(
    W_{t_{n+1}^N}(\omega) - W_{t_{n}^N}(\omega)
  \right)
\end{multline*}
due to \eqref{eq:muonesided2}.
Therefore, we obtain
\begin{align*}
  0 &> Y^N_{n+1}(\omega)
\geq
  Y_n^N(\omega) \cdot \left(
    1 + \frac{TL}{N} + 
    \tilde{\sigma}\left( Y_n^N(\omega) \right) \cdot \left(
      W_{t_{n+1}^N}(\omega) - W_{t_{n}^N}(\omega)
    \right)
  \right)
\\
  &\quad
  + \left(
    \frac{T\mu(0)}{N} + \sigma(0) \cdot \left(
      W_{t_{n+1}^N}(\omega) - W_{t_{n}^N}(\omega)
    \right)
  \right)
\\
  &\geq
  Y_n^N(\omega) \cdot 
    e^{ \left(
      \frac{TL}{N} + 
      \tilde{\sigma}
      \left( Y_n^N(\omega) \right) \cdot \left(
        W_{t_{n+1}^N}(\omega) - W_{t_{n}^N}(\omega)
      \right)
    \right) }
\\
  &\quad
  + \left(
    \frac{T\mu(0)}{N} + \sigma(0) \cdot \left(
      W_{t_{n+1}^N}(\omega) - W_{t_{n}^N}(\omega)
    \right)
  \right)
\\
  &=
  Y_n^N(\omega)
  \cdot
  e^{
    \alpha_n^N(\omega)
  } + \beta_n^N(\omega) 
\\&=
  -\left(
    \left| Y_n^N(\omega) \right|
    \cdot e^{\alpha_n^N(\omega)}
    + \sgn( Y_n^N(\omega) )
    \cdot \beta_n^N(\omega) 
  \right) .
\end{align*}
Hence, the induction hypothesis yields
\begin{align*}
&\left| Y_{n+1}^N(\omega) \right|
  \leq
  D_{\tau^N_n(\omega),n}^{N,1}(\omega)
  \cdot
  e^{
    \alpha_n^N(\omega)
  } + \sgn( Y_n^N(\omega) ) \cdot \beta_n^N(\omega) 
\\
  &=
  e^{\left(
    \sum_{l=\tau^N_n(\omega)}^{n} \alpha_l^N(\omega)
  \right)}
  \left(
    T\left|\mu(0)\right| + \left| \sigma(0) \right| 
    + \left|\xi(\omega)\right|  + 1
  \right)
\\
  &\quad +
  \sum_{k=\tau^N_n(\omega)}^{n-1}
  \sgn( Y_k^N(\omega) ) \,
  e^{\left(
    \sum_{l=k+1}^{n} \alpha_l^N(\omega)
  \right)}
  \beta_k^N(\omega)
  + \sgn( Y_n^N(\omega) ) \cdot \beta_n^N(\omega) 
\\
  &=
  e^{\left(
    \sum_{l=\tau^N_n(\omega)}^{n} \alpha_l^N(\omega)
  \right)} \left(
    T\left|\mu(0)\right| + \left| \sigma(0) \right| 
    + \left|\xi(\omega)\right|  + 1
  \right)
\\
  &\quad +
  \sum_{k=\tau^N_n(\omega)}^{n}
  \sgn( Y_k^N(\omega) ) \,
  e^{\left(
    \sum_{l=k+1}^{n} \alpha_l^N(\omega)
  \right)}
  \beta_k^N(\omega)
\\
  &=
  D_{\tau^N_n(\omega),n+1}^{N,1}(\omega)
  =
  D_{\tau^N_{n+1}(\omega),n+1}^{N,1}(\omega) ,
\end{align*}
which shows that \eqref{lemmprof1} 
also holds in this case for $ \omega $ and $ n+1 $.\\
3.) In the third case assume that
$ Y_n^N(\omega) \geq 0 $ and that
$ Y_{n+1}^N(\omega) < 0 $ holds.
Then we obtain 
$ \tau_{n+1}^N(\omega) = n + 1 $.
Additionally, note that
\begin{equation}
  \left| Y_n^N(\omega) \right|
  \leq 
  \sup_{k,l \in \{0,1,\ldots,n\} }
  \left| D_{k,l}^{N,1}(\omega) \right|
  \leq 
  r_N
\end{equation}
holds due to the induction hypothesis and
since 
$ \omega \in \Omega_{N,n+1} \subset \Omega_{N,n} $.
Hence, \eqref{eq:posflowRN} yields
\begin{equation}
  Y_n^N(\omega)
  +\frac{T}{N} \mu\left( Y_n^N(\omega) \right)
  -\frac{T\mu(0)}{N}
  \geq 0
\end{equation}
and therefore
\begin{equation}  \begin{split}
  0
  &\geq
  Y_{n+1}^N(\omega)
\\
  &=
  Y_n^N(\omega)
  +\frac{T}{N} \mu\left( Y_n^N(\omega) \right)
  +\sigma\left(Y_n^N(\omega)\right) \cdot \left(
    W_{t_{n+1}^N}(\omega) - W_{t_{n}^N}(\omega)
  \right)
\\
  &=
  \left(
    Y_n^N(\omega)
    +\frac{T}{N} \mu\left( Y_n^N(\omega) \right)
    -\frac{T\mu(0)}{N}
  \right)
  +\frac{T\mu(0)}{N}
  +\sigma\left(Y_n^N(\omega)\right) \cdot \left(
    W_{t_{n+1}^N}(\omega) - W_{t_{n}^N}(\omega)
  \right)
\\
  &\geq
  \frac{T\mu(0)}{N}
  +\sigma\left(Y_n^N(\omega)\right) \cdot \left(
    W_{t_{n+1}^N}(\omega) - W_{t_{n}^N}(\omega)
  \right)
\end{split}     \end{equation}
and
\begin{align*}
  0
  &\geq
  Y_{n+1}^N(\omega)
  \geq
  -\left|\frac{T\mu(0)}{N}\right|
  -\left|\sigma\left(Y_n^N(\omega)\right) \right|
  \cdot \left|
    W_{t_{n+1}^N}(\omega) - W_{t_{n}^N}(\omega)
  \right|
\\  
  &\geq
  -T\left|\mu(0)\right|
  -\left(
    L \cdot \left|Y_n^N(\omega)\right|
    + \left| \sigma(0) \right|
  \right) \cdot
  \left|
    W_{t_{n+1}^N}(\omega) - W_{t_{n}^N}(\omega)
  \right|.
\end{align*}
This implies
\begin{align*}
  0
  &\geq
  Y_{n+1}^N(\omega)
  \geq
  -T\left|\mu(0)\right|
  \!-\!\left(
    L \cdot D_{\tau_n^N(\omega),n}^{N,1} (\omega)
    + \left| \sigma(0) \right|
  \right) 
  \!\cdot\!
  \left|
    W_{t_{n+1}^N}(\omega) - W_{t_{n}^N}(\omega)
  \right|
\\  
  &\geq
  -T\left|\mu(0)\right|
  -\left(
    L \cdot r_N
    + \left| \sigma(0) \right|
  \right)
  \cdot
  \left|
    W_{t_{n+1}^N}(\omega) - W_{t_{n}^N}(\omega)
  \right|
\\  
  &\geq
  -T\left|\mu(0)\right|
  -\left(
    L \cdot \frac{N^{\frac{1}{4}}}{L}
    + \left| \sigma(0) \right|
  \right)
  \cdot
  \left|
    W_{t_{n+1}^N}(\omega) - W_{t_{n}^N}(\omega)
  \right|
\end{align*}
and
\begin{equation}
  0
  \geq
  Y_{n+1}^N(\omega)
  \geq
  -T\left|\mu(0)\right|
  -\left(
    \left| \sigma(0) \right|
    + N^{\frac{1}{4}}
  \right)
  \cdot 
  \frac{1}{N^{\frac{1}{4}}}
\\  
  =
  -T\left|\mu(0)\right|
  -\frac{
    \left| \sigma(0) \right|
  }{
    N^{\frac{1}{4}}
  }
  - 1
  \geq
  -T\left|\mu(0)\right|
  -\left| \sigma(0) \right|
  - 1
\end{equation}
and finally
\begin{align*}
&\left| Y_{n+1}^N(\omega) \right|
  \leq 
  T\left|\mu(0)\right|
  + \left| \sigma(0) \right|
  + 1
  + \left| \xi(\omega) \right|
\\
  &=
  e^{
    \left(\sum_{l=n+1}^{n} \alpha_l^N(\omega)\right)
  } \left(
    T \left|\mu(0)\right|
    + \left| \sigma(0) \right|
    + 1
    + \left| \xi(\omega) \right|
  \right)
  + \sum_{k=n+1}^{n} 
  \sgn( Y_k^N(\omega) ) \,
  e^{
    \left(\sum_{l=k+1}^{n} \alpha_l^N(\omega)\right)
  } 
  \beta_k^N(\omega)
\\
  &=
  e^{
    \left(\sum_{l=\tau_{n+1}^N(\omega)}^{n} \alpha_l^N(\omega)\right)
  } \left(
    T \left|\mu(0)\right|
    + \left| \sigma(0) \right|
    + 1
    + \left| \xi(\omega) \right|
  \right)
  + \sum_{k=\tau_{n+1}^N(\omega)}^{n} 
  \sgn( Y_k^N(\omega) ) \,
  e^{
    \left(\sum_{l=k+1}^n \alpha_l^N(\omega)\right)
  } 
  \beta_k^N(\omega)
\\
&
  = 
  D_{\tau_{n+1}^N(\omega),n+1}^{N,1}(\omega) ,
\end{align*}
which shows that \eqref{lemmprof1} 
also holds in this case for $ \omega $ and $ n + 1 $.\\
4.) In the last case assume that
$ Y_n^N(\omega) < 0 $ and that
$ Y_{n+1}^N(\omega) \geq 0 $ holds.
Then we also obtain 
$ \tau_{n+1}^N(\omega) = n + 1 $.
Note that
\begin{equation}
  \left| Y_n^N(\omega) \right|
  \leq 
  \sup_{k,l \in \{0,1,\ldots,n\} } 
  \left| D_{k,l}^{N,1}(\omega) \right|
  \leq 
  r_N
\end{equation}
holds due to the induction hypothesis and
since 
$ \omega \in \Omega_{N,n+1} \subset \Omega_{N,n} $.
Therefore, \eqref{eq:negflowRN} implies
\begin{equation}
  Y_n^N(\omega)
  +\frac{T}{N} \mu\!\left( Y_n^N(\omega) \right)
  -\frac{T\mu(0)}{N}
  \leq 0
\end{equation}
and hence
\begin{equation}  \begin{split}
  0
  &\leq
  Y_{n+1}^N(\omega)
\\
  &=
  Y_n^N(\omega)
  +\frac{T}{N} \mu\left( Y_n^N(\omega) \right)
  +\sigma\left(Y_n^N(\omega)\right) \cdot \left(
    W_{t_{n+1}^N}(\omega) - W_{t_{n}^N}(\omega)
  \right)
\\
  &=
  \left(
    Y_n^N(\omega)
    +\frac{T}{N} \mu\left( Y_n^N(\omega) \right)
    -\frac{T\mu(0)}{N}
  \right)
  +\frac{T\mu(0)}{N}
  +\sigma\left(Y_n^N(\omega)\right) \cdot \left(
    W_{t_{n+1}^N}(\omega) - W_{t_{n}^N}(\omega)
  \right)
\\
  &\leq
  \frac{T\mu(0)}{N}
  +\sigma\left(Y_n^N(\omega)\right) \cdot \left(
    W_{t_{n+1}^N}(\omega) - W_{t_{n}^N}(\omega)
  \right)
\end{split}     \end{equation}
and
\begin{equation}  \begin{split}
  0
  \leq
  Y_{n+1}^N(\omega)
  &\leq
  \left|\frac{T\mu(0)}{N}\right|
  + \left|\sigma\left(Y_n^N(\omega)\right) \right|
  \cdot \left|
    W_{t_{n+1}^N}(\omega) - W_{t_{n}^N}(\omega)
  \right|
\\  
  &\leq
  T\left|\mu(0)\right|
  + \left(
    L \cdot \left|Y_n^N(\omega)\right|
    + \left| \sigma(0) \right|
  \right)
  \cdot
  \left|
    W_{t_{n+1}^N}(\omega) - W_{t_{n}^N}(\omega)
  \right|
\\  
  &\leq
  T\left|\mu(0)\right|
  +\left(
    L \cdot 
    D_{\tau_n^N(\omega),n}^{N,1}(\omega)
    + \left| \sigma(0) \right|
  \right)
  \cdot
  \left|
    W_{t_{n+1}^N}(\omega) - W_{t_{n}^N}(\omega)
  \right| .
\end{split}     \end{equation}
This shows
\begin{equation}  \begin{split}
\left| Y_{n+1}^N(\omega) \right|
  &\leq
  T\left|\mu(0)\right|
  +\left(
    L \cdot r_N
    + \left| \sigma(0) \right|
  \right)
  \cdot
  \left|
    W_{t_{n+1}^N}(\omega) - W_{t_{n}^N}(\omega)
  \right|
\\  
  &\leq
  T\left|\mu(0)\right|
  +\left(
    L \cdot \frac{N^{\frac{1}{4}}}{L}
    + \left| \sigma(0) \right|
  \right)
  \cdot
  \left|
    W_{t_{n+1}^N}(\omega) - W_{t_{n}^N}(\omega)
  \right|
\\  
  &\leq
  T\left|\mu(0)\right|
  +\left(
    \left| \sigma(0) \right|
    + N^{\frac{1}{4}}
  \right)
  \cdot 
  \frac{1}{N^{\frac{1}{4}}}
  =
  T\left|\mu(0)\right|
  +\frac{
    \left| \sigma(0) \right|
  }{
    N^{\frac{1}{4}}
  }
  + 1
\end{split}     \end{equation}
and finally
\begin{align*}
&\left|
    Y_{n+1}^N(\omega)
  \right|
  \leq 
  T\left|\mu(0)\right|
  + \left| \sigma(0) \right|
  + 1
  + \left| \xi(\omega) \right|
\\
  &=
  e^{
    \left(\sum_{l=n+1}^{n} \alpha_l^N(\omega)\right)
  } \left(
    T \left|\mu(0)\right|
    + \left| \sigma(0) \right|
    + 1
    + \left| \xi(\omega) \right|
  \right)
  + \sum_{k=n+1}^{n}
  \sgn( Y_k^N(\omega) ) \,
  e^{
    \left(\sum_{l=k+1}^{n} \alpha_l^N(\omega)\right)
  } 
  \beta_k^N(\omega)
\\
  &=
  e^{
    \left(\sum_{l=\tau_{n+1}^N(\omega)}^{n} 
    \alpha_l^N(\omega)\right)
  } \left(
    T \left|\mu(0)\right|
    + \left| \sigma(0) \right|
    + 1
    + \left| \xi(\omega) \right|
  \right)
\\
  &\quad
  + \sum_{k=\tau_{n+1}^N(\omega)}^{n} 
  \sgn( Y_k^N(\omega) ) \,
  e^{
    \left(\sum_{l=k+1}^{n} 
    \alpha_l^N(\omega)\right)
  } 
  \beta_k^N(\omega)
  = 
  D_{\tau_{n+1}^N(\omega),n+1}^{N,1}(\omega) ,
\end{align*}
which shows that \eqref{lemmprof1}
holds in this case for $\omega $ and $n+1$
and which finally yields \eqref{lemmprof1}
for every 
$ \omega \in \Omega_{N,n} $, 
$ n \in \left\{ 0, 1, 2, \dots, N \right\} $ 
and every $ N \in \mathbb{N} $
by induction.
\end{proof}
\subsection{Proof of Lemma \ref{lem_sup}}
In order to bound the 
moments of the 
dominating process,
we need to estimate the absolute
moments of a normally 
distributed random
variable.
%
%
\begin{lemma}  \label{l:mom_normal}
  Let 
  $ Y \colon \Omega \rightarrow \mathbb{R} $ 
  be a normally distributed
  $ \mathcal{F} $/$ \mathcal{B}( \mathbb{R} ) $-measurable
  mapping. Then we obtain that
  \begin{equation}\label{lpnormal}
    \left\| Y \right\|_{ L^p }
    \leq
    p
    \left\| Y \right\|_{ L^2 }
  \end{equation}
  holds for every $ p \in [1,\infty) $.
\end{lemma}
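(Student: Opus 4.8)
The plan is to reduce everything to two elementary facts: the power-mean (Lyapunov) inequality $\|Y\|_{L^p}\le\|Y\|_{L^q}$ for $1\le p\le q$ on a probability space, and the explicit even moments $\mathbb{E}[G^{2k}]=(2k-1)!!$ of a standard normal $G$ (which itself follows from the recursion $\mathbb{E}[G^{2k}]=(2k-1)\mathbb{E}[G^{2k-2}]$ via integration by parts). Write $Y=m+\sigma G$ with $m=\mathbb{E}[Y]\in\mathbb{R}$, $\sigma=(\mathrm{Var}(Y))^{1/2}\in[0,\infty)$ and $G$ standard normal, so that $\|Y\|_{L^2}^2=m^2+\sigma^2$.

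For $p\in[1,2]$ there is nothing to do: the power-mean inequality gives $\|Y\|_{L^p}\le\|Y\|_{L^2}\le p\,\|Y\|_{L^2}$ since $p\ge1$.

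For $p\in(2,\infty)$ I would set $k:=\lceil p/2\rceil\in\mathbb{N}$, so that $p\le 2k\le p+2$. First bound the even moment: $(2k-1)!!$ is a product of $k$ factors, each at most $2k-1$, so $\mathbb{E}[G^{2k}]=(2k-1)!!\le(2k-1)^k$ and hence $\|G\|_{L^{2k}}\le(2k-1)^{1/2}\le(p+1)^{1/2}$. Minkowski's inequality applied to $Y=m+\sigma G$ then gives $\|Y\|_{L^{2k}}\le|m|+\sigma\,\|G\|_{L^{2k}}\le|m|+\sigma\,(p+1)^{1/2}$, and Cauchy--Schwarz in the form $|m|\cdot 1+\sigma\cdot(p+1)^{1/2}\le(m^2+\sigma^2)^{1/2}\,(1+(p+1))^{1/2}$ yields $\|Y\|_{L^{2k}}\le(p+2)^{1/2}\,\|Y\|_{L^2}$. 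Combining this with the power-mean inequality $\|Y\|_{L^p}\le\|Y\|_{L^{2k}}$ (valid because $p\le 2k$) and with $p+2\le p^2$ (which holds for $p\ge 2$, since $p^2-p-2=(p-2)(p+1)\ge0$) gives $\|Y\|_{L^p}\le(p+2)^{1/2}\,\|Y\|_{L^2}\le p\,\|Y\|_{L^2}$, which is \eqref{lpnormal}.

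There is no genuinely hard step here; the only thing requiring care is arranging the crude bounds ($(2k-1)!!\le(2k-1)^k$, Minkowski, Cauchy--Schwarz, and $2k\le p+2$) so that the accumulated constant comes out to be exactly $p$ rather than $Cp$ or $p+C$. As an alternative to the even-moment reduction, one could instead prove $\|G\|_{L^p}\le\sqrt{p}$ directly for all $p\ge1$ from the recursion $\mathbb{E}|G|^p=(p-1)\,\mathbb{E}|G|^{p-2}$ together with log-convexity of $p\mapsto\mathbb{E}|G|^p$, and then apply the same Minkowski and Cauchy--Schwarz steps; I would favor the even-moment version since it keeps the Gaussian input completely elementary.
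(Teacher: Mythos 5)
Your proof is correct and follows essentially the same route as the paper's: both reduce $p$ to a nearby integer power, bound the Gaussian moments there (you get $\|G\|_{L^{2k}}\le\sqrt{2k-1}$ from $(2k-1)!!\le(2k-1)^k$; the paper uses $\mathbb{E}|\hat Y|^k\le(k-1)^{k/2}$ inside a binomial expansion of $|\sigma\hat Y+c|^{\lceil p\rceil}$), and then clean up with Cauchy--Schwarz and $p\ge 2$. Your variant — rounding $p$ up to the \emph{even} integer $2\lceil p/2\rceil$ and applying Minkowski rather than the binomial theorem — is marginally tidier since it only needs the elementary even-moment identity $\mathbb{E}[G^{2k}]=(2k-1)!!$, but it is the same argument in substance.
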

\noindent
\begin{proof}[Proof
of Lemma~\ref{l:mom_normal}]
  First of all, H\"older's inequality implies 
  \begin{equation}
    \left\| 
      Y
    \right\|_{ L^p }
    \leq
    \left\|
      Y
    \right\|_{ L^2 }
    \leq
    p
    \left\|
      Y
    \right\|_{ L^2 }
  \end{equation}
for every $ p \in [1,2) $,
  which shows \eqref{lpnormal} in the case $ p \in [1,2) $.
  Denote now the mean of 
  $ Y \colon \Omega \rightarrow \mathbb{R}$ 
  by $ c:=\mathbb{E}\left[ Y \right] \in \mathbb{R}$ 
  and the 
  standard deviation by 
  $ \sigma := \sqrt{ \mathbb{E}\left[ 
    \left( Y - c \right)^2
  \right] } 
  \in [0,\infty) $.
  If $\hat{Y}\colon\Omega\to\mathbb{R}$ is 
  a standard
  normally distributed
  $\mathcal{F}$/$\mathcal{B}(\mathbb{R})$-measurable
  mapping, then
  \begin{equation}  \begin{split}
    \Bigl\|Y\Bigr\|_{ L^p }
    =
    \left\| \sigma \hat{Y} + c \right\|_{ L^p }
  & \leq
    \left\| \sigma \hat{Y} + c \right\|_{ 
      L^{ \left\lceil p \right\rceil } 
    }
    =
    \left(
    \mathbb{E}\bigg[
    \left| \sigma \hat{Y} + c \right|^{ \left\lceil p \right\rceil }
    \bigg]
    \right)^{ \frac{1}{ \left\lceil p \right\rceil } }\\
  &\leq
    \left(
    \mathbb{E}\left[
    \sum_{k=0}^{ \left\lceil p \right\rceil }
    \left( 
      \begin{array}{c}
        \left\lceil p \right\rceil \\ k
      \end{array}
    \right)
    \left| \sigma \right|^k 
    \left| \hat{Y} \right|^k 
    \left|c \right|^{ ( \left\lceil p \right\rceil - k ) } 
    \right]
    \right)^{ \frac{1}{ \left\lceil p \right\rceil } } \\
  &=
    \left(
    \sum_{k=0}^{ \left\lceil p \right\rceil }
    \left( 
      \begin{array}{c}
        \left\lceil p \right\rceil \\ k
      \end{array}
    \right)
    \left| \sigma \right|^k 
    \mathbb{E}
    \left| \hat{Y} \right|^k 
    \left|c \right|^{ ( \left\lceil p \right\rceil - k ) } 
    \right)^{ \frac{1}{ \left\lceil p \right\rceil } }
  \end{split}     \end{equation}
  holds for every $ p \in [2,\infty)$.
  Using
  $\mathbb{E}| \hat{Y} |^k\leq (k-1)^{k/2}$
  for all $ k \in \left\{ 2,3, \dots \right\}$,
  $\mathbb{E}|\hat{Y}|\leq\sqrt{\mathbb{E}(\hat{Y})^2}=1$
  and
  $(|\sigma|+|c|)^2\leq 2(\sigma^2+c^2)$
  yields
  \begin{equation}  \begin{split}
    \left\| Y \right\|_{ L^p }
    &
  \leq
    \left(
    \sum_{k=0}^{ \left\lceil p \right\rceil }
    \left( 
      \begin{array}{c}
        \left\lceil p \right\rceil \\ k
      \end{array}
    \right)
    \left| \sigma \right|^k 
    \left(
      \left\lceil p \right\rceil - 1
    \right)^{ \frac{\left\lceil p \right\rceil}{2} }
    \left|c \right|^{ ( \left\lceil p \right\rceil - k ) } 
    \right)^{ \frac{1}{ \left\lceil p \right\rceil } }
  \\
    &
  =
    \sqrt{
      \left( \left\lceil p \right\rceil - 1 
      \right)
    }
    \left(
      |\sigma| 
      + |c| 
    \right)
  \leq
    \sqrt{
      p
    }
    \left(
      |\sigma| 
      + |c| 
    \right)
  \leq
    p
    \sqrt{
    \left( \sigma^2 + c^2
    \right)
    }
  = 
    p
    \left\| {Y} \right\|_{L^2}
  \end{split}     \end{equation}
  for every $ p \in [2,\infty)$,
  which finally shows~\eqref{lpnormal}.
\end{proof}
\begin{lemma}\label{l:exp_mom_normal2}
  Let 
  $ Y \colon \Omega \rightarrow \mathbb{R} $ 
  be a standard normally distributed
  $ \mathcal{F} $/$ \mathcal{B}( \mathbb{R} ) $-measurable
  mapping. Then we obtain that
  \begin{equation}\label{elpnormal}
    \left\|
      e^{ c Y } - 1
    \right\|_{ L^p }
    \leq
    \left| c \right|
    e^{ (c^2 + 1) p }
  \end{equation}
  holds for every $ c \in \mathbb{R}$
  and every $ p \in [1,\infty) $.
\end{lemma}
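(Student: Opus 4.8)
The plan is to dominate $|e^{cY}-1|$ pointwise by a constant multiple of $|Y|\,e^{|c|\,|Y|}$, then to absorb the polynomial factor $|Y|^{p}$ into the exponential, so that only a Gaussian exponential moment remains to be estimated. This keeps the correct linear dependence on $c$ (to the first power, not the $p$-th power) throughout the argument.

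First I would note that $e^{a}-1=\int_{0}^{a}e^{t}\,dt$ together with the estimate $e^{t}\le e^{|a|}$ for $t$ between $0$ and $a$ gives $|e^{a}-1|\le |a|\,e^{|a|}$ for every $a\in\mathbb{R}$. Applied with $a=cY$, this yields the pointwise bound $|e^{cY}-1|\le |c|\,|Y|\,e^{|c|\,|Y|}$ on $\Omega$, and hence
\[
  \left\| e^{cY}-1 \right\|_{L^{p}}
  \le
  |c|\,\Bigl( \mathbb{E}\bigl[\, |Y|^{p}\, e^{\,p|c|\,|Y|} \,\bigr] \Bigr)^{1/p}.
\]
Next I would eliminate the factor $|Y|^{p}$: since the map $u\mapsto u^{p}e^{-pu}$ attains its maximum over $[0,\infty)$ at $u=1$ with value $e^{-p}$, we have $u^{p}\le e^{-p}e^{pu}$ for all $u\ge 0$, so that $|Y|^{p}e^{\,p|c|\,|Y|}\le e^{-p}\,e^{\,p(1+|c|)|Y|}$ pointwise. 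Finally, using $e^{\lambda|Y|}\le e^{\lambda Y}+e^{-\lambda Y}$ for $\lambda\ge 0$ together with the moment generating function of the standard normal distribution, one obtains $\mathbb{E}[e^{\lambda|Y|}]\le 2\,e^{\lambda^{2}/2}$; taking $\lambda=p(1+|c|)$ gives $\mathbb{E}\bigl[\,|Y|^{p}e^{\,p|c|\,|Y|}\,\bigr]\le 2\,e^{-p}\,e^{\,p^{2}(1+|c|)^{2}/2}$.

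Putting these together, taking the $p$-th root, using $2^{1/p}\le 2$ for $p\ge 1$, and invoking the elementary inequality $(1+|c|)^{2}/2\le c^{2}+1$ (which is just $(|c|-1)^{2}\ge 0$), I arrive at
\[
  \left\| e^{cY}-1 \right\|_{L^{p}}
  \le
  |c| \cdot 2e^{-1} \cdot e^{(c^{2}+1)p}
  \le
  |c|\, e^{(c^{2}+1)p},
\]
since $2/e<1$; this is exactly \eqref{elpnormal}.

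I expect the only subtle point to be getting the clean constant on the right-hand side. A more pedestrian route --- Cauchy--Schwarz to split $|Y|^{p}$ from $e^{\,p|c|\,|Y|}$, followed by the polynomial moment bound $\|Y\|_{L^{2p}}\le 2p$ from Lemma~\ref{l:mom_normal} --- produces a prefactor of the order $2\sqrt{2}\,p$ in front of $|c|\,e^{c^{2}p}$, which (barely) exceeds $e^{p}$ near $p=1$ and therefore fails to deliver \eqref{elpnormal}. Absorbing $|Y|^{p}$ directly into the exponential via $u^{p}\le e^{-p}e^{pu}$ circumvents all polynomial Gaussian moments and is precisely what makes the constant come out below $1$.
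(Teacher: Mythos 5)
Your argument is correct and genuinely different from the paper's. The paper decomposes the expectation according to the sign of $Y$, bounds $1-e^{-u}\le u$ on each piece, applies the Cauchy--Schwarz inequality to separate $|Y|^p$ from the exponential, and then invokes the sharp Gaussian moment estimate $\|Y\|_{L^{2p}}\le\sqrt{2p-1}$ together with the moment generating function $\mathbb{E}[e^{2cpY}]=e^{2c^2p^2}$; the final constant scrapes under $e^{(c^2+1)p}$ only because $2^{1/(2p)}\sqrt{2p-1}\le e^p$. Your route avoids both the sign split and the polynomial moment bound entirely: you prove a single pointwise inequality $|e^a-1|\le|a|e^{|a|}$, and then fold the power $|Y|^p$ directly into the exponent via the elementary estimate $u^p\le e^{-p}e^{pu}$, so that the only probabilistic input is $\mathbb{E}[e^{\lambda|Y|}]\le 2e^{\lambda^2/2}$. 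This is more self-contained (it does not need the $(2p-1)^{p/2}$ moment bound, nor Lemma~\ref{l:mom_normal}) and in fact yields the slightly sharper prefactor $2e^{-1}|c|$ in place of $|c|$. Your closing remark is also accurate: replacing $\sqrt{2p-1}$ with the cruder bound $2p$ from Lemma~\ref{l:mom_normal} would give a prefactor of order $2\sqrt{2}\,p$, which exceeds $e^p$ near $p=1$, so the direct absorption of $|Y|^p$ into the exponent is not merely cosmetic — it is what keeps the constant below the target without leaning on a refined Gaussian moment estimate.
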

\begin{proof}[Proof of
Lemma~\ref{l:exp_mom_normal2}]
We establish \eqref{elpnormal}
in the case $ c \in (0,\infty) $
since the case $ c = 0 $ is trivial
and since the case $ c \in (-\infty,0)$
immediately follows from the
case $ c \in (0,\infty) $.
In order to show \eqref{elpnormal}
in the case $ c \in (0,\infty)$,
note that
\begin{align*}
  \mathbb{E}
  \left[
  \left|
    e^{ c Y } - 1
  \right|^p
  \right]
&=
  \mathbb{E}\left[
    \mathbbm{1}_{ 
      \left\{
        Y \geq 0 
      \right\}
    }
    \, e^{ c p Y }
    \left(
      1 - e^{ - c Y }
    \right)^p
  \right]
  +
  \mathbb{E}\left[
    \mathbbm{1}_{ 
      \left\{
        Y < 0 
      \right\}
    }
    \left(
      1 - e^{ c Y }
    \right)^p
  \right]
\\&\leq
  c^p \cdot
  \mathbb{E}\left[
    \mathbbm{1}_{ 
      \left\{
        Y \geq 0 
      \right\}
    }
    \, e^{ c p Y }
    \left| Y \right|^p
  \right]
  +
  c^p \cdot
  \mathbb{E}\left[
    \mathbbm{1}_{ 
      \left\{
        Y < 0 
      \right\}
    }
    \left| Y \right|^p
  \right]
\\&=
  c^p \cdot
  \mathbb{E}\left[
    \left(
      \mathbbm{1}_{ 
        \left\{
          Y \geq 0 
        \right\}
      }
      \, e^{ c p Y }
      +
      \mathbbm{1}_{ 
        \left\{
          Y < 0 
        \right\}
      }
    \right)
    \left| Y \right|^p
  \right]
\\&\leq
  c^p 
  \left(
    \mathbb{E}\!\left[
      e^{ 2 c p Y }
    \right]
    + \mathbb{P}\!\left[Y<0\right]
  \right)^{ \frac{1}{2} }
  \sqrt{
    \mathbb{E}\left| 
      Y
    \right|^{ 2 p }
  }
=
  c^p 
  \left(
      e^{ 2 c^2 p^2 }
    + \frac{1}{2}
  \right)^{ \frac{1}{2} }
  \sqrt{
    \mathbb{E}\left| 
      Y
    \right|^{ 2 p }
  }
\end{align*}
and 
the estimate
$\|Y\|_{L^{2p}}\leq \sqrt{2p-1}$ 
therefore shows that
\begin{align*}
  \left\|
    e^{ c Y } - 1
  \right\|_{ L^p }
&\leq
  c 
  \left(
      e^{ 2 c^2 p^2 }
    + \frac{1}{2}
  \right)^{ \frac{1}{2 p} }
  \left\| 
    Y
  \right\|_{ L^{ 2 p } }
\leq
  c \,
  2^{ \frac{1}{2 p} }
  e^{ c^2 p }
  \left\| 
    Y
  \right\|_{ L^{ 2 p } }
\leq
  c \cdot
  e^{ (c^2 + 1) p }
\end{align*}
for all $ c \in (0,\infty) $
and all $ p \in [1,\infty) $.
This completes the proof
of Lemma~\ref{l:exp_mom_normal2}.
\end{proof}
\begin{lemma}
\label{l:exp_alpha}
Let 
$ \alpha^N_n : \Omega
\rightarrow \mathbb{R} $
for 
$ n \in \left\{0,1,\dots,N-1\right\}$
and $ N \in \mathbb{N}$
be given by \eqref{eq:def_alpha}.
Then we obtain
\begin{equation}
  \left\| 
    e^{ -\alpha^N_n }
    - 1
  \right\|_{ L^p }
\leq
  L N^{-\frac{1}{2}}
  \left(
    \sqrt{T}
  e^{ \left( 
      L^2 T
      + 1
    \right) p 
  }
  +
    T
  \right)
\end{equation}
for all
$ n \in \left\{ 0, 1,\dots, N \right\}$,
$ N \in \mathbb{N}$ and all
$ p \in [1,\infty) $.
\end{lemma}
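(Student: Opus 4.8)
The plan is to exploit the product structure of $\alpha^N_n$ and, via a conditioning (freezing) argument, to reduce the statement to Lemma~\ref{l:exp_mom_normal2}. Fix $N\in\mathbb{N}$, $p\in[1,\infty)$ and $n\in\{0,1,\dots,N-1\}$, and abbreviate $\Delta W:=W_{t^N_{n+1}}-W_{t^N_n}$, which is centered normally distributed with variance $T/N$. I would first record two facts. Since $Y^N_n=Y^{N,1}_n$ is built only from $\xi^{(1)}$ and the increments of $W$ up to time $t^N_n$, it is $\mathcal{F}_{t^N_n}$-measurable and hence so is $\tilde{\sigma}(Y^N_n)$, whereas $\Delta W$ is independent of $\mathcal{F}_{t^N_n}$ because $W$ is an $(\mathcal{F}_t)_{t\in[0,T]}$-Brownian motion. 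Moreover, the Lipschitz assumption~\eqref{eq:sigmalip} on $\sigma$ gives $|\tilde{\sigma}(x)|\leq L$ for every $x\in\mathbb{R}$, so $|\tilde{\sigma}(Y^N_n)|\leq L$ pointwise.

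Next I would use the definition~\eqref{eq:def_alpha} to write $e^{-\alpha^N_n}=e^{-TL/N}\,e^{-\tilde{\sigma}(Y^N_n)\Delta W}$ and split
\[
  e^{-\alpha^N_n}-1
  =
  e^{-TL/N}\bigl(e^{-\tilde{\sigma}(Y^N_n)\Delta W}-1\bigr)
  +\bigl(e^{-TL/N}-1\bigr),
\]
so that, by the triangle inequality in $L^p$ and $0<e^{-TL/N}\leq 1$,
\[
  \left\|e^{-\alpha^N_n}-1\right\|_{L^p}
  \leq
  \left\|e^{-\tilde{\sigma}(Y^N_n)\Delta W}-1\right\|_{L^p}
  +\bigl|e^{-TL/N}-1\bigr|.
\]
The deterministic summand I would bound by $|e^{-TL/N}-1|\leq TL/N\leq TL\,N^{-1/2}$, using $1-e^{-x}\leq x$ for $x\geq0$ and $N\geq1$. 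For the random summand I would condition on $\mathcal{F}_{t^N_n}$: writing $Z:=\sqrt{N/T}\,\Delta W$ (a standard normal independent of $\mathcal{F}_{t^N_n}$) and $c:=-\tilde{\sigma}(Y^N_n)\sqrt{T/N}$ (an $\mathcal{F}_{t^N_n}$-measurable coefficient), the freezing identity $\mathbb{E}[\,|e^{cZ}-1|^p\mid\mathcal{F}_{t^N_n}\,]=\psi(c)$ with $\psi(v):=\mathbb{E}[\,|e^{vZ'}-1|^p\,]$ for an independent standard normal $Z'$, combined with Lemma~\ref{l:exp_mom_normal2}, gives
\[
  \mathbb{E}\Bigl[\,\bigl|e^{-\tilde{\sigma}(Y^N_n)\Delta W}-1\bigr|^p\;\Big|\;\mathcal{F}_{t^N_n}\Bigr]
  =\psi(c)
  \leq
  |c|^p\,e^{(c^2+1)p^2}.
\]
Since $|c|\leq L\sqrt{T/N}$ and $c^2=(\tilde{\sigma}(Y^N_n))^2\,T/N\leq L^2T$ pointwise (again using $N\geq1$), taking expectations and $p$-th roots would yield $\left\|e^{-\tilde{\sigma}(Y^N_n)\Delta W}-1\right\|_{L^p}\leq L\sqrt{T}\,N^{-1/2}\,e^{(L^2T+1)p}$.

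Combining the two bounds then gives
\[
  \left\|e^{-\alpha^N_n}-1\right\|_{L^p}
  \leq
  L\,N^{-1/2}\left(\sqrt{T}\,e^{(L^2T+1)p}+T\right),
\]
which is the assertion. The step I expect to be the main obstacle — really the only non-routine point — is the conditioning argument: one must justify applying Lemma~\ref{l:exp_mom_normal2}, which is stated for a deterministic coefficient, with the random but $\mathcal{F}_{t^N_n}$-measurable coefficient $c$. This amounts to the freezing identity $\mathbb{E}[\,g(V,\Delta W)\mid\mathcal{F}_{t^N_n}\,]=\psi(V)$ with $\psi(v)=\mathbb{E}[g(v,\Delta W)]$, valid for any $\mathcal{F}_{t^N_n}$-measurable $V$ and $\Delta W$ independent of $\mathcal{F}_{t^N_n}$, applied to the nonnegative measurable function $g(v,w)=|e^{-vw}-1|^p$; everything else reduces to the bound $|\tilde{\sigma}|\leq L$ from~\eqref{eq:sigmalip} and the elementary inequality $1-e^{-x}\leq x$.
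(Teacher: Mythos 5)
Your proof is correct and follows essentially the same route as the paper's: the identical triangle-inequality decomposition $e^{-\alpha^N_n}-1 = e^{-TL/N}\bigl(e^{-\tilde\sigma(Y^N_n)\Delta W}-1\bigr) + \bigl(e^{-TL/N}-1\bigr)$, bounding the deterministic term by $TL/N$ and the random term via Lemma~\ref{l:exp_mom_normal2} together with $|\tilde\sigma|\leq L$. Your write-up is actually a little more careful than the paper's, which invokes Lemma~\ref{l:exp_mom_normal2} with the random but $\mathcal{F}_{t^N_n}$-measurable coefficient without spelling out the freezing/conditioning step that you make explicit.
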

\begin{proof}[Proof
of Lemma~\ref{l:exp_alpha}]
The triangle inequality and
Lemma~\ref{l:exp_mom_normal2}
impliy
\begin{align*}
  \left\| 
    e^{ -\alpha^N_n }
    - 1
  \right\|_{ L^p }
&\leq
  \left\| 
    e^{ -\alpha^N_n }
    - e^{ -\frac{ T L }{ N } }
  \right\|_{ L^p }
  +
  \left\| 
    e^{ -\frac{ T L }{ N } }
    - 1
  \right\|_{ L^p }
\\&=
  e^{ -\frac{ T L }{ N } }
  \left\| 
    e^{ 
      -\tilde{ \sigma }( Y^N_n )
      \big( 
        W_{ t^N_{n+1} } - W_{ t^N_n }
      \big)
    }
    - 1
  \right\|_{ L^p }
  +
  \left(
    1 -
    e^{ - \frac{ T L }{ N } }
  \right)
\\&\leq
  L \sqrt{
    \frac{T}{N}
  }
  e^{ \left( 
      L^2 
      \frac{T}{N} + 1
    \right) p 
  }
  +
  \frac{ T L }{ N }
\\&\leq
  L N^{-\frac{1}{2}}
  \left(
    \sqrt{T}
  e^{ \left( 
      L^2 T
      + 1
    \right) p 
  }
  +
    T
  \right)
\end{align*}
for all 
$ n \in 
\left\{ 0, 1, \dots, N-1 \right\} $,
$ N \in \mathbb{N} $
and all $ p \in [1,\infty) $.
This completes the
proof of 
Lemma~\ref{l:exp_alpha}.
\end{proof}
\begin{proof}[Proof of Lemma \ref{lem_sup}]
Let $ C \in (0, \infty) $ 
be a real number satisfying
\begin{equation}
\label{eq:realC}
  5 + L + \delta + T
  +
  \left| \mu( 0 ) \right|
  +
  \left|
    \sigma( 0 )
  \right|
  \leq
  C,
  \qquad
  \left|
    \mu( x )
  \right|
  +
  \left|
    \mu'( x )
  \right|
  \leq
  C\left( 1 + |x|^C \right)
\end{equation}
for all $ x \in \mathbb{R} $.
Such a real number 
$ C < \infty $ indeed
exists since the derivative of $\mu$ is assumed
to grow at most polynomially
according to~\eqref{eq:polynomial_growth}.
Since the exponential function is convex,
we obtain that
\begin{equation}
  \exp
    \left( 
      \sum_{l=0}^{n-1} 
      z \cdot \tilde{\sigma}( Y_l^N ) 
      \cdot \left( W_{t_{l+1}^N} - W_{t_l^N} \right) 
    \right) 
\end{equation}
is a positive submartingale in $ n \in \{ 0, 1, \ldots, N \} $
for every $ z \in \{ -1, 1 \} $ and every $ N \in \mathbb{N} $.
Therefore, Doob's inequality (see e.g. Theorem 11.2 (ii) in \cite{K08}) shows
\begin{equation}  \begin{split}
  &\mathbb{E}\left[
    \left|
      \sup_{n \in \{0,1,\ldots,N\}}
      e^{ 
        \left( 
          \sum_{l=0}^{n-1} 
          z 
          \cdot
          \tilde{\sigma}( Y_l^N ) 
          \cdot 
          \left( W_{t_{l+1}^N} - W_{t_l^N} \right)
        \right)
      }
    \right|^p
  \right]
\\
  &\leq
  \left( \frac{p}{p-1} \right)^p
  \mathbb{E}\left[
    \left|
      e^{ 
        \left( 
          \sum_{l=0}^{N-1} 
          z 
          \cdot
          \tilde{\sigma}( Y_l^N ) 
          \cdot 
          \left( W_{t_{l+1}^N} - W_{t_l^N} \right)
        \right)
      }
    \right|^p
  \right]
\\
  &=
  \left( \frac{p}{p-1} \right)^p
  \mathbb{E}\Bigg[ 
    e^{
      p z 
      \left( 
        \sum_{l=0}^{N-2} 
        \tilde{\sigma}( Y_l^N ) 
        \cdot 
        \left( W_{t_{l+1}^N} - W_{t_l^N} \right)
      \right)
    }
    \mathbb{E}\left[
      e^{
        p z 
        \left(  
          \tilde{\sigma}( Y_{N-1}^N ) 
          \cdot 
          \left( W_{t_{N}^N} - W_{t_{N-1}^N} \right)
        \right)
      }
      \bigg| \mathcal{F}_{t_{N-1}^N}
    \right]
  \Bigg]
\\
  &=
  \left( \frac{p}{p-1} \right)^p
  \mathbb{E}\left[ 
    e^{
      p z 
      \left( 
        \sum_{l=0}^{N-2} 
        \tilde{\sigma}( Y_l^N ) 
        \cdot 
        \left( W_{t_{l+1}^N} - W_{t_l^N} \right)
      \right)
    }
    e^{
      \frac{1}{2} 
      \left( 
        p z \cdot 
        \tilde{\sigma}( Y_{N-1}^N ) 
      \right)^2
      \frac{T}{N}
    }
  \right]
\end{split}     \end{equation}
due to
the moment generating function
$\mathbb{E}[\exp(cY)]=\exp(c^2/2)$, $c\in\mathbb{R}$, of the standard normally distributed
random variable $Y:=\sqrt{\tfrac{N}{T}}\left(W_{t_N^N}-W_{t_{N-1}}^N\right)$
and hence, using $|\tilde{\sigma}(x)|\leq L$
for every $x\in\mathbb{R}$,
\begin{equation}  \begin{split}
  &\left\|
    \sup_{ n \in \{ 0, 1, \ldots, N \} }
    e^{ 
      \left( 
        \sum_{l=0}^{n-1} 
        z 
        \cdot
        \tilde{\sigma}( Y_l^N ) 
        \cdot 
        \left( W_{t_{l+1}^N} - W_{t_l^N} \right)
      \right)
    }
  \right\|_{L^p}
\\
  &\leq
  \left\{
  \left( \frac{p}{p-1} \right)^p
  \mathbb{E}\left[ 
    e^{
      p z 
      \left( 
        \sum_{l=0}^{N-2} 
        \tilde{\sigma}( Y_l^N ) 
        \cdot 
        \left( W_{t_{l+1}^N} - W_{t_l^N} \right)
      \right)
    }
  \right]
  e^{
    \frac{1}{2} 
    p^2 L^2
    \frac{T}{N}
  }
  \right\}^{\frac{1}{p}}
\\&\leq
  \ldots
\leq
  \left\{
  \left( \frac{p}{p-1} \right)^p
  \prod_{l=0}^{N-1}
  e^{
    \frac{1}{2} 
    p^2 L^2 \frac{T}{N}
  }
  \right\}^{\frac{1}{p}}
  =
  \left( \frac{p}{p-1} \right)
  e^{
    \frac{1}{2} 
    p L^2 T
  }
\end{split}     \end{equation}
for every $ p \in (1, \infty) $, $ z \in \{-1,1\} $ 
and every $ N \in \mathbb{N} $.
This implies
\begin{equation}  \begin{split}\label{lpref}
&
  \left\|
    \sup_{n \in \{0,1,\ldots,N\}}
    e^{
      z\left( 
        \sum_{l=0}^{n-1} \alpha_l^N 
      \right)
  }
  \right\|_{L^p}
\\&\leq
  e^{TL}
  \left\|
    \sup_{n \in \{0,1,\ldots,N\}}
    e^{
      \left( 
        \sum_{l=0}^{n-1} z \cdot  
        \tilde{\sigma}( Y_l^N ) \cdot
        \left(
          W_{t_{l+1}^N} - W_{t_{l}^N}
        \right)
      \right)
    }
  \right\|_{L^p}
\\
  &\leq
  e^{ TL }
  \left(
    \frac{ p }{ p - 1 }
  \right)
  e^{ \frac{1}{2} p L^2 T }
  \leq
  2
  e^{ \frac{p}{2} (L^2 + L) T }
\leq
  e^{ \frac{p}{2} (1 + C^2 T + C^2) }  
\leq
  e^{ \frac{p}{2} C^3 } 
\leq
  e^{ \frac{p}{10} C^4 } 
\end{split}     \end{equation}
for every $ p \in [2, \infty) $, $ z \in \{-1,1\} $
and every $ N \in \mathbb{N} $.
Moreover, Lemma~\ref{l:exp_alpha}
shows
\begin{equation}  \begin{split}
\label{eq:alphaest}
  \left\|
    e^{-\alpha^N_n }
    - 1
  \right\|_{ L^p }
&\leq
  L N^{-\frac{1}{2}}
  \left( 
    \sqrt{T} 
    e^{ \left( L^2 T + 1 \right) p }
    + T
  \right)
\\
&\leq
  C^2 N^{-\frac{1}{2}}
  \left( 
    \frac{1}{2} 
    e^{ \left( C^3 + 1 \right) p }
    + 1
  \right)    
\leq
  e^{ 
    \left( C + C^3 + 1 
    \right) p
  } 
  N^{-\frac{1}{2}}
\leq
  \frac{
    e^{ 
      \frac{ p }{ 4 }
      C^4
    }
  }{ 
    \sqrt{N} 
  }
\end{split}     \end{equation}
for all 
$ n \in \left\{ 0,1, \dots, N-1 
\right\} $,
$ N \in \mathbb{N} $
and all $ p \in [2,\infty) $.
With these estimates at hand,
we now bound the $p$-th 
absolute moment
\begin{equation}
 \mathbb{E}
  \left[
    \sup_{ v,w \in 
      \left\{0,1,\dots,N\right\} 
    }
    \left| D^{N,1}_{v,w} \right|^p
  \right]
\end{equation}
of the dominating process
for every $ N \in \mathbb{N} $
and every $ p \in [2,\infty) $.
By definition~\eqref{d_def} and by the triangle inequality
we have
\begin{align*}
  \left\| 
    \sup_{ v,w \in 
       \left\{0,1,\dots,N\right\} 
    }
    \left| D^{N,1}_{v,w} \right|
  \right\|_{L^p}
&\leq
  \left\| 
    \left(
      \sup_{v,w \in \{0,1,\ldots,N\}}
      e^{ 
        \left( 
          \sum_{l=v}^{w-1} 
          \alpha_l^N 
        \right) 
      }
    \right)
    \left(
      C^2 + | \xi |
    \right)
  \right\|_{L^p}
\\
&\quad +
  \left\| 
    \sup_{v,w \in \{0,1,\ldots,N\}}
    \left|
      \sum_{k=v}^{w-1}
      \sgn( Y_k^N ) \,
        e^{ 
          \left( 
            \sum_{l=k+1}^{w-1} 
            \alpha_l^N 
          \right) 
        }
      \beta_k^N
    \right|
  \right\|_{L^p}
\end{align*}
and, using H\"older's inequality,
\begin{align*}
  \left\| 
    \sup_{ v,w \in 
       \left\{0,1,\dots,N\right\} 
    }
    \left| D^{N,1}_{v,w} \right|
  \right\|_{L^p}
&\leq
  \left\| 
    \sup_{0 \leq v \leq w \leq N}
    e^{ 
      \left( 
        \sum_{l=v}^{w-1} \alpha_l^N 
      \right) 
    }
  \right\|_{L^{2p}}
  \left(
    C^2 + \| \xi \|_{ L^{2p} }
  \right)
\\&\quad 
  +
  \left\| 
    \sup_{0 \leq v \leq w \leq N}
    \left|
      \sum_{k=v}^{w-1}
      \sgn( Y_k^N ) \,
      e^{ 
        \left( 
          \sum_{l=k+1}^{w-1} \alpha_l^N 
        \right) 
      }
      \beta_k^N
    \right|
  \right\|_{L^p}
\end{align*}
for all $ N \in \mathbb{N} $
and all $ p \in [2, \infty) $.
Therefore, we obtain
\begin{equation}  \begin{split}
  \left\| 
    \sup_{ v,w \in 
       \left\{0,1,\dots,N\right\} 
    }
    \left| D^{N,1}_{v,w} \right|
  \right\|_{L^p}
&\leq
  \left\| 
    \sup_{0 \leq v \leq w \leq N}
    e^{ \left( \sum_{l=v}^{w-1} \alpha_l^N \right) }
  \right\|_{L^{2p}}
  \left(
    C^2 + \| \xi \|_{L^{2p}}
  \right)
\\
&\quad+
  \left\| 
    \sup_{0 \leq v \leq w \leq N}
    \left(
      e^{ 
        \left( \sum_{l=0}^{w-1}   
          \alpha_l^N 
        \right) 
      }
      \left|
        \sum_{k=v}^{w-1}
        \sgn( Y_k^N ) \,
        e^{ -\left( \sum_{l=0}^{k} \alpha_l^N \right) }
        \beta_k^N
      \right|
    \right)
  \right\|_{L^p}
\end{split}     \end{equation}
and
\begin{equation}  \begin{split}
&
  \left\| 
    \sup_{ v,w \in 
       \left\{0,1,\dots,N\right\} 
    }
    \left| D^{N,1}_{v,w} \right|
  \right\|_{L^p}
\\&\leq
  \left\| 
    \sup_{w \in \{0,1,\ldots, N\}}
    e^{ 
      \left( \sum_{l=0}^{w-1} \alpha_l^N  
      \right) 
    }
  \right\|_{L^{4p}}
  \left\| 
    \sup_{v \in \{0,1,\ldots, N\}}
    e^{ -\left( 
      \sum_{l=0}^{v-1} \alpha_l^N 
    \right) }
  \right\|_{L^{4p}}
  \left(
    C^2 + \| \xi \|_{L^{2p}}
  \right)
\\&\quad
  +
  \left\| 
    \sup_{w \in \{0,1,\ldots, N\} }
    e^{ \left( 
      \sum_{l=0}^{w-1} \alpha_l^N 
    \right) }
  \right\|_{L^{2p}}
  \left\| 
    \sup_{0 \leq v \leq w \leq N}
      \left|
        \sum_{k=v}^{w-1}
        \sgn( Y_k^N ) \,
        e^{ -\left( 
          \sum_{l=0}^{k} \alpha_l^N 
        \right) }
        \beta_k^N
      \right|
  \right\|_{L^{2p}}
\end{split}     \end{equation}
for all $ N \in \mathbb{N} $
and all $ p \in [2, \infty) $.
Inequality~\eqref{lpref} therefore yields
\begin{equation}  \begin{split}
\lefteqn{
  \left\| 
    \sup_{ v,w \in 
       \left\{0,1,\dots,N\right\} 
    }
    \left| D^{N,1}_{v,w} \right|
  \right\|_{L^p}
}
\\
&\leq
  e^{ \frac{4p}{10} C^4 }
  e^{ \frac{4p}{10} C^4 }
  \left(
    C^2 + \| \xi \|_{L^{2p}}
  \right)
  +
  e^{ \frac{2p}{10} C^4 }
  \left\| 
    \sup_{0 \leq v \leq w \leq N}
      \left|
        \sum_{k=v}^{w-1}
        \sgn( Y_k^N ) \,
        e^{ -\left( 
          \sum_{l=0}^{k} \alpha_l^N 
        \right) }
        \beta_k^N
      \right|
  \right\|_{L^{2p}}
\\&
\leq
  e^{ p C^4 }
  \left(
    C^2 + \| \xi \|_{L^{2p}}
  \right)
  +
  2 e^{ \frac{p}{5} C^4 }
  \left\| 
    \sup_{ w \in \{0,1,\ldots,N\} }
      \left|
        \sum_{k=0}^{w-1}
        \sgn( Y_k^N ) \,
        e^{ -\left( 
          \sum_{l=0}^{k} \alpha_l^N 
        \right) }
        \beta_k^N
      \right|
  \right\|_{L^{2p}}
\end{split}     \end{equation}
for all $ N \in \mathbb{N} $
and all $ p \in [2, \infty) $.
By definition of $ \beta^N_n $,
$ n \in \left\{ 0,1,\dots, N-1 \right\}$,
$ N \in \mathbb{N} $,
(see \eqref{eq:def_beta})
we then obtain
\begin{equation*}  \begin{split}
  \left\| 
    \sup_{ v,w \in 
       \left\{0,1,\dots,N\right\} 
    }
    \left| D^{N,1}_{v,w} \right|
  \right\|_{L^p}
&\leq
  e^{ p C^4 }
  (
    C^2 + \| \xi \|_{L^{2p}}
  )
  +
  2 e^{ \frac{p}{5} C^4 }
  \left(
        \sum_{k=0}^{N-1}
  \left\| 
        \sgn( Y_k^N ) \,
        e^{ -\left( 
          \sum_{l=0}^{k} \alpha_l^N 
        \right) }
          \frac{T}{N}
          \mu(0)
    \right\|_{L^{2p}}
  \right)
\\
&\quad
  +
  2 e^{ \frac{p}{5} C^4 }
  \left\| 
    \sup_{ w \in \{0,1,\ldots,N\} }
      \left|
        \sum_{k=0}^{w-1}
        \sgn( Y_k^N ) \,
        e^{ -\left( 
          \sum_{l=0}^{k} \alpha_l^N 
        \right) }
        \sigma(0)
        \left(
          W_{ t^N_{k+1} }
          -
          W_{ t^N_k }
        \right)
      \right|
  \right\|_{L^{2p}}
\end{split}     \end{equation*}
and therefore
\begin{equation*}  \begin{split}
  \left\| 
    \sup_{ v,w \in 
       \left\{0,1,\dots,N\right\} 
    }
    \left| D^{N,1}_{v,w} \right|
  \right\|_{L^p}
&\leq
  e^{ p C^4 }
  (
    C^2 + \| \xi \|_{L^{2p}}
  )
  +
  2 C^2 N^{-1} e^{ \frac{p}{5} C^4 }
  \left(
        \sum_{k=0}^{N-1}
  \left\| 
        e^{ -\left( 
          \sum_{l=0}^{k} \alpha_l^N 
        \right) }
    \right\|_{L^{2p}}
  \right)
\\
&\quad
  +
  2 C e^{ \frac{p}{5} C^4 }
  \left\| 
    \sup_{ w \in \{0,1,\ldots,N\} }
      \left|
        \sum_{k=0}^{w-1}
        \sgn( Y_k^N ) \,
        e^{ -\left( 
          \sum_{l=0}^{k} \alpha_l^N 
        \right) }
        \left(
          W_{ t^N_{k+1} }
          -
          W_{ t^N_k }
        \right)
      \right|
  \right\|_{L^{2p}}
\end{split}     \end{equation*}
for all $ N \in \mathbb{N} $
and all $ p \in [2,\infty) $.
The triangle inequality
and again estimate~\eqref{lpref}
hence yield
\begin{multline*}
  \left\| 
    \sup_{ v,w \in 
       \left\{0,1,\dots,N\right\} 
    }
    \left| D^{N,1}_{v,w} \right|
  \right\|_{L^p}
\leq
  e^{ p C^4 }
  (
    C^2 + \| \xi \|_{L^{2p}}
  )
  +
  2 C^2 e^{ \frac{2 p}{5} C^4 }
\\
  +
  2 C e^{ \frac{p}{5} C^4 }
  \left\| 
    \sup_{ w \in \{0,1,\ldots,N\} }
      \left|
        \sum_{k=0}^{w-1}
        \sgn( Y_k^N ) \,
        e^{ -\left( 
          \sum_{l=0}^{k-1} \alpha_l^N 
        \right) }
        \left( 
          e^{ -\alpha^N_k } - 1
        \right)
        \left(
          W_{ t^N_{k+1} }
          -
          W_{ t^N_k }
        \right)
      \right|
  \right\|_{L^{2p}}
\\
  +
  2 C e^{ \frac{p}{5} C^4 }
  \left\| 
    \sup_{ w \in \{0,1,\ldots,N\} }
      \left|
        \sum_{k=0}^{w-1}
        \sgn( Y_k^N ) \,
        e^{ -\left( 
          \sum_{l=0}^{k-1} \alpha_l^N 
        \right) }
        \left(
          W_{ t^N_{k+1} }
          -
          W_{ t^N_k }
        \right)
      \right|
  \right\|_{L^{2p}}
\end{multline*}
and Doob's inequality (see, e.g., Theorem~11.2~(ii) 
in \cite{K08}) and
Da\-vis-Burk\-hol\-der-Gun\-dy's
inequality~\eqref{eq:bdg} then show
\begin{multline*}
  \left\| 
    \sup_{ v,w \in 
       \left\{0,1,\dots,N\right\} 
    }
    \left| D^{N,1}_{v,w} \right|
  \right\|_{L^p}
\leq
  e^{ 2 p C^4 }
  (
    C^2 + \| \xi \|_{L^{2p}}
  )
\\
  +
  e^{ \frac{p}{4} C^4 }
  \left(
        \sum_{k=0}^{N-1}
  \left\| 
        e^{ -\left( 
          \sum_{l=0}^{k-1} \alpha_l^N 
        \right) }
        \left( 
          e^{ -\alpha^N_k } - 1
        \right)
        \left(
          W_{ t^N_{k+1} }
          -
          W_{ t^N_k }
        \right)
  \right\|_{L^{2p}}
  \right)
\\
  + 
  e^{ \frac{p}{4} C^4 } K_{ 2 p }
  \left(
        \sum_{k=0}^{N-1}
  \left\| 
        e^{ -\left( 
          \sum_{l=0}^{k-1} \alpha_l^N 
        \right) }
        \left(
          W_{ t^N_{k+1} }
          -
          W_{ t^N_k }
        \right)
  \right\|_{L^{2p}}^2
      \right)^{ \! \frac{1}{2} }
\end{multline*}
for all $ N \in \mathbb{N} $
and all $ p \in [2,\infty) $.
H\"{o}lder's inequality thus
gives
\begin{multline*}
  \left\| 
    \sup_{ v,w \in 
       \left\{0,1,\dots,N\right\} 
    }
    \left| D^{N,1}_{v,w} \right|
  \right\|_{L^p}
\leq
  e^{ 2 p C^4 }
  (
    C^2 + \| \xi \|_{L^{2p}}
  )
\\
  +
  e^{ \frac{p}{4} C^4 }
  \left(
        \sum_{k=0}^{N-1}
      \left\| 
        e^{ -\left( 
          \sum_{l=0}^{k-1} \alpha_l^N 
        \right) }
      \right\|_{ L^{ 6 p } }
        \left\|
          e^{ -\alpha^N_k } - 1
        \right\|_{ L^{ 6 p } }
        \left\|
          W_{ \frac{T}{N} }
        \right\|_{ L^{ 6 p } }
  \right)
\\
  +
  e^{ \frac{p}{4} C^4 } K_{ 2 p }
      \left\|
          W_{ \frac{T}{N} }
      \right\|_{L^{4p}}
  \left(
        \sum_{k=0}^{N-1}
      \left\| 
        e^{ -\left( 
          \sum_{l=0}^{k-1} \alpha_l^N 
        \right) }
      \right\|_{ L^{4 p} }^2
      \right)^{ \! \frac{1}{2} }
\end{multline*}
and 
inequality~\eqref{lpref},
inequality~\eqref{eq:alphaest}  
and
Lemma~\ref{l:mom_normal}
finally yield
\begin{equation}  \begin{split}
  \left\| 
    \sup_{ v,w \in 
       \left\{0,1,\dots,N\right\} 
    }
    \left| D^{N,1}_{v,w} \right|
  \right\|_{L^p}
&\leq
  e^{ 2 p C^4 }
  (
    C^2 + \| \xi \|_{L^{2p}}
  )
  +
  e^{ \frac{p}{4} C^4 }
  \left(
        \sum_{k=0}^{N-1}
      e^{\frac{6p}{10}C^4}
      \cdot 
      e^{\frac{6p}{4}C^4}
      \frac{1}{\sqrt{N}}
      \cdot 
      6p
      \sqrt{\frac{T}{N}}
  \right)
 \\
 &\qquad
  +
  e^{ \frac{p}{4} C^4 } K_{ 2 p }
  \cdot 4p \sqrt{\frac{T}{N}}
  \left(
        \sum_{k=0}^{N-1}
      e^{\frac{8p}{10}C^4}
      \right)^{ \! \frac{1}{2} }
\\
&\leq
  e^{ 2 p C^4 }
  (
    C^2 + \| \xi \|_{L^{2p}}
  )
  +
  e^{ 3p C^4 }
      6p
      \sqrt{T}
  +
  e^{ p C^4 } K_{ 2 p }
   4p \sqrt{T}
\end{split}     \end{equation}
for all $ N \in \mathbb{N} $
and all $ p \in [2,\infty) $.
This shows the assertion 
in the case
$ p \in [2,\infty)$.
The case $ p \in [1,2) $ then follows from
Jensen's inequality and
this completes the proof of 
Lemma~\ref{lem_sup}.
\end{proof}
\subsection{Proof of Lemma \ref{lemm_omega}}
\begin{proof}[Proof of Lemma \ref{lemm_omega}]
First of all, we have
\begin{align*}
  \mathbb{P}\left[
    \sup_{n \in \{0,1,\ldots,N-1\}}
    \left| W_{t_{n+1}^N} - W_{t_n^N} \right|
    > N^{-\frac{1}{4}} 
  \right]
  &=
  \mathbb{P}\left[
    \cup_{n=0}^{N-1} \left\{
      \omega \in \Omega \; \bigg|
      \left| W_{t_{n+1}^N}(\omega) - 
        W_{t_n^N}(\omega) \right|
      > N^{-\frac{1}{4}}
    \right\}
  \right]
\\
  &\leq 
  \sum_{n=0}^{N-1}
  \mathbb{P}\left[
    \left| W_{t_{n+1}^N} - W_{t_n^N} \right|
    > N^{-\frac{1}{4}} 
  \right]
\end{align*}
and
\begin{equation}  \begin{split}
\lefteqn{
  \mathbb{P}\left[
    \sup_{n \in \{0,1,\ldots,N-1\}}
    \left| W_{t_{n+1}^N} - W_{t_n^N} \right|
    > N^{-\frac{1}{4}} 
  \right]
}
\\
&\leq
  N \cdot
  \mathbb{P}\left[
    \left| W_{t_{1}^N} - W_{t_0^N} \right|
    > N^{-\frac{1}{4}} 
  \right]
\\
  &=
  2 N \cdot
  \mathbb{P}\left[
     W_{\frac{T}{N}} > N^{-\frac{1}{4}} 
  \right]
  =
  2 N \cdot
  \mathbb{P}\left[
     \sqrt{\frac{N}{T}} W_{\frac{T}{N}} > N^{-\frac{1}{4}} \sqrt{\frac{N}{T}} 
  \right]
\end{split}     \end{equation}
and
\begin{equation}  \begin{split}\label{tildeomegaNest}
\lefteqn{
  \mathbb{P}\left[
    \sup_{n \in \{0,1,\ldots,N-1\}}
    \left| W_{t_{n+1}^N} - W_{t_n^N} \right|
    > N^{-\frac{1}{4}} 
  \right]
}
\\
  &\leq
  2 N
  \int_{\frac{N^{\frac{1}{4}}}{\sqrt{T}}}^{\infty}
  \frac{1}{\sqrt{2 \pi}} e^{-\frac{x^2}{2}} \, dx
  \leq
  2 N
  \int_{\frac{N^{\frac{1}{4}}}{\sqrt{T}}}^{\infty}
  \frac{1}{\sqrt{2 \pi}} 
  \frac{x \sqrt{T}}{N^{\frac{1}{4}}} e^{-\frac{x^2}{2}} \, dx
\\&=
  2 N^{\frac{3}{4}} \sqrt{T} \!\!
  \int_{\frac{N^{\frac{1}{4}}}{\sqrt{T}}}^{\infty}
  \frac{1}{\sqrt{2 \pi}} \, x \, e^{-\frac{x^2}{2}} \, dx
  =
  2 N^{\frac{3}{4}} \sqrt{T}
  \frac{1}{\sqrt{2 \pi}} \! 
  \left[ \!
    - e^{-\frac{x^2}{2}}
  \right]_{x = \frac{N^{\frac{1}{4}}}{\sqrt{T}}}^{x = \infty}
\\&=
  2 N^{\frac{3}{4}} \sqrt{T}
  \frac{1}{\sqrt{2 \pi}} 
  e^{-\frac{\sqrt{N}}{2 T}}
  =
  \sqrt{\frac{2}{\pi}}
  N^{\frac{3}{4}} \sqrt{T}
  e^{-\frac{\sqrt{N}}{2 T}}
  \leq 
  N^{\frac{3}{4}} \sqrt{T}
  e^{-\frac{\sqrt{N}}{2 T}}
\end{split}     \end{equation}
for every $ N \in \mathbb{N} $.
Additionally, note that
\begin{equation}
  ( \Omega_N )^c
  =
  \left\{
    \omega \in \Omega \; \bigg|
    \sup_{v,w \in \{0,1,\ldots,N\}}
    \left| D_{v,w}^{N,1}(\omega) \right|
    > r_N 
  \right\}
  \cup
  \left\{
    \sup_{n \in \{0,1,\ldots,N-1\}}
    \left| W_{t_{n+1}^N} - W_{t_n^N} \right|
    > N^{-\frac{1}{4}} 
  \right\}
\end{equation}
for every $ N \in \mathbb{N} $.
Therefore, inequality~\eqref{tildeomegaNest} implies
\begin{equation}  \begin{split}
  \mathbb{P}\Big[
    ( \Omega_N )^c 
  \Big]
&\leq
  \mathbb{P}\left[
      \sup_{v,w \in \{0,1,\ldots,N\}}
      \left| D_{v,w}^{N,1} \right| 
      > r_N
  \right]
  +
  \mathbb{P}\left[
    \sup_{n \in \{0,1,\ldots,N-1\}}
    \left| W_{t_{n+1}^N} - W_{t_n^N} \right|
    > N^{-\frac{1}{4}} 
  \right]
\\
&\leq
  \mathbb{P}\left[
    \left( 1 +
    \sup_{v,w \in \{0,1,\ldots,N\}}
    \left| D_{v,w}^{N,1} \right| 
     \right)^p
    > (1 + r_N)^p
  \right]
  +
  N^{\frac{3}{4}} \sqrt{T}
  e^{-\frac{\sqrt{N}}{2 T}}
\end{split}     \end{equation}
for every $ N \in \mathbb{N} $ and
every $ p \in [1,\infty) $.
Now we apply Markov's inequality to obtain
\begin{equation}  \begin{split}
  \mathbb{P}\bigg[
    ( \Omega_N )^c 
  \bigg]
  &\leq
  \frac{\mathbb{E}\left[ \left( 1 +
      \sup_{v,w \in \{0,1,\ldots,N\}}
      \left| D_{v,w}^{N,1} \right| 
     \right)^p
  \right]}{(1 + r_N)^p  }
  +
  N^{\frac{3}{4}} \sqrt{T}
  e^{-\frac{\sqrt{N}}{2 T}}
\\
&\leq
  \left(
    \sup_{M \in \mathbb{N}}
    \left\| 1 \!+\!
       \sup_{v,w \in \{0,1,\ldots,M\}}
      \left| D_{v,w}^{M,1} \right| 
    \right\|_{L^p}^p
  \right) \!\!
  \frac{1}{(1 + r_N)^p} 
  +
  N^{\frac{3}{4}} \sqrt{T}
  e^{-\frac{\sqrt{N}}{2 T}}
\end{split}     \end{equation}
and
\begin{equation}  \begin{split}\label{omegaNest}
  \mathbb{P}\bigg[
    ( \Omega_N )^c 
  \bigg]
&\leq
  \left(
    \sup_{M \in \mathbb{N}}
    \left\| 1 +
        \sup_{v,w \in \{0,1,\ldots,M\}}
        \left| D_{v,w}^{M,1} \right|
    \right\|_{L^p}^p
  \right)
  c^p N^{ -p \cdot 
    \min\left( \frac{1}{4}, \frac{1}{(\delta - 1)} \right) }
  +
  N^{\frac{3}{4}} \sqrt{T}
  e^{-\frac{\sqrt{N}}{2 T}}
\\
&\leq
  \left[
    c \left(
    1 + 
    \sup_{M \in \mathbb{N}}
    \left\|
       \sup_{v,w \in \{0,1,\ldots,M\}}
      \left| D_{v,w}^{M,1} \right| 
    \right\|_{L^p}
    \right)
  \right]^p
  N^{ -p \cdot 
  \min\left( \frac{1}{4}, \frac{1}{(\delta - 1)} \right) }
  +
  N^{\frac{3}{4}} \sqrt{T}
  e^{-\frac{\sqrt{N}}{2 T}}
\end{split}     \end{equation}
for every $ N \in \mathbb{N} $ and every $ p \in [1, \infty) $, 
where we used
$$
  \frac{1}{(1 + r_N)}
\leq
  c 
  \cdot 
  N^{ -\min\left( 
    \frac{1}{4}, \frac{1}{(\delta - 1)} \right) 
  }
$$
for every $ N \in \mathbb{N} $ with
$ c \in (0,\infty) $ given by
$
  c
  :=
  \sup_{ N \in \mathbb{N} }
  \left(
      N^{ 
        \min\left( 
          \frac{1}{4}, \frac{1}{(\delta - 1)} 
        \right) }
     /
      \left( 1 + r_N \right) 
  \right)
  < \infty .
$
Moreover, \eqref{omegaNest} with
$p:=4\max( 4, \delta -1 )$ yields
\begin{equation}  \begin{split}
  \mathbb{P}\bigg[
    ( \Omega_N )^c 
  \bigg]
&\leq
  N^{ -4 }
  \left[
    c \left(
    1 + 
    \sup_{M \in \mathbb{N}}
    \left\|
      \sup_{v,w \in \{0,1,\ldots,M\}}
      \left| D_{v,w}^{M,1} \right|
    \right\|_{ L^{ 4 \max( 4, \delta - 1 ) } }
    \right)
  \right]^{ 4 \max( 4, \delta - 1 ) }
  +
  N \sqrt{T}
  e^{-\frac{\sqrt{N}}{2 T}}
\\&\leq
  N^{ -4 }
  \left[
    c \left(
    1 + 
    \sup_{M \in \mathbb{N}}
    \left\|
      \sup_{v,w \in \{0,1,\ldots,M\}}
      \left| D_{v,w}^{M,1} \right|
    \right\|_{ L^{ 4 \max( 4, \delta - 1 ) } }
    \right)
  \right]^{ 4 \max( 4, \delta - 1 ) }
\\
&\qquad
  +
  N^{-4}
  \left[
  \sqrt{T}
  \left( \sup_{ x \in [0,\infty) }
    x^5 e^{ - \frac{ \sqrt{x} }{ 2 T } }
  \right)
  \right]
\end{split}     \end{equation}
for every $ N \in \mathbb{N} $.
The right-hand side is finite by Lemma~\ref{lem_sup}.
This proves
\begin{equation}\label{omega_estimate}
  \tilde{c}
  :=
  \sup_{ N \in \mathbb{N} }
  \left(
    N^4 \cdot 
    \mathbb{P}\bigg[ (\Omega_N)^c \bigg]
  \right)
  \in [0, \infty).
\end{equation}
Next we show that the event $\tilde{\Omega}$ has probability $1$.
We have
\begin{equation}  \begin{split}
  \mathbb{P}\left[
    \bigcap^{\infty}_{ M = N }
    \bigcap^{M^2}_{m=1}
    \Omega^m_M 
  \right]
&=
  1 -
  \mathbb{P}\left[ \,
    \left(
      \bigcap^{\infty}_{ M = N }
      \bigcap^{M^2}_{m=1}
      \Omega^m_M 
    \right)^c \;
  \right]
=
  1 -
  \mathbb{P}\left[ 
      \bigcup^{\infty}_{ M = N }
      \bigcup^{M^2}_{m=1}
    \left(
      \Omega^m_M 
    \right)^c
  \right]
\end{split}     \end{equation}
and hence
\begin{multline*}
  \mathbb{P}\left[
    \bigcap^{\infty}_{ M = N }
    \bigcap^{M^2}_{m=1}
    \Omega^m_M 
  \right]
\geq
  1 -
  \sum^{\infty}_{M=N}
  \sum^{M^2}_{m=1}
  \mathbb{P}\bigg[ 
    \left(
      \Omega^m_M 
    \right)^c
  \bigg]
=
  1 -
  \sum^{\infty}_{M=N}
  \left(
  M^2 \cdot
  \mathbb{P}\bigg[ 
    \left(
      \Omega_M 
    \right)^c
  \bigg]
  \right)
  \geq 
  1 - 
  \left(
    \sum_{M=N}^{\infty} \tilde{c} M^{-2}
  \right)
\end{multline*}
for every $ N \in \mathbb{N} $ due to \eqref{omega_estimate}.
This implies
\begin{align*}
  \mathbb{P}\left[
    \bigcap^{\infty}_{ M = N }
    \bigcap^{M^2}_{m=1}
    \Omega^m_M 
  \right]
  \geq
  1 - \tilde{c}
  \left(
    \int_{N-1}^{\infty}
    \frac{1}{s^2} \, ds
  \right)
  =
  1 - \tilde{c}
  \left[ -\frac{1}{s} \right]_{s=N-1}^{s=\infty}
  =
  1 - \frac{\tilde{c}}{(N-1)}
\end{align*}
for every $ N \in \{2,3,\ldots\} $, which shows
\begin{equation}\label{pfull1}
  \mathbb{P}\left[
    \bigcup_{ N \in \mathbb{N} }
    \bigcap^{\infty}_{ M = N }
    \bigcap^{M^2}_{m=1}
    \Omega^m_M
  \right]
  =
  \lim_{N \rightarrow \infty}
  \mathbb{P}\left[
    \bigcap^{\infty}_{ M = N }
    \bigcap^{M^2}_{m=1}
    \Omega^m_M
  \right]
  = 1 .
\end{equation}
Moreover, we have
\begin{align*}
&
  \mathbb{E}\left[
    \left( 
      \sup_{ N \in \mathbb{N} }
      \frac{
        \left|      
          \sum^{ N^2 }_{ m=1 }
          \left(
            \mathbbm{1}_{ \Omega_N^m }
            \cdot
            f( 
              Y^{N,m}_N
            )
            -
            \mathbb{E}\left[ 
              \mathbbm{1}_{ \Omega_N }
              f( 
                Y^{N}_N
              )
            \right]
          \right)
        \right| 
      }{
        N^{ \left( 1 + \varepsilon 
        \right) }
      }
    \right)^p \;
  \right] 
\\&\leq
  \mathbb{E}\left[
      \sum_{ N=1 }^{ \infty }
      \frac{
        \left|      
          \sum^{ N^2 }_{ m=1 }
          \left(
            \mathbbm{1}_{ \Omega_N^m }
            \cdot
            f( 
              Y^{N,m}_N
            )
            -
            \mathbb{E}\left[ 
              \mathbbm{1}_{ \Omega_N }
              f( 
                Y^{N}_N
              )
            \right]
          \right)
        \right|^p 
      }{
        N^{ \left( 1 + \varepsilon 
        \right) p }
      }
  \right] 
\\&=
      \sum^{ \infty }_{ N=1 }
      \frac{
        \left\|      
          \sum^{ N^2 }_{ m=1 }
          \left(
            \mathbbm{1}_{ \Omega_N^m }
            \cdot
            f( 
              Y^{N,m}_N
            )
            -
            \mathbb{E}\left[ 
              \mathbbm{1}_{ \Omega_N }
              f( 
                Y^{N}_N
              )
            \right]
          \right)
        \right\|_{L^p}^p 
      }{
        N^{ \left( 1 + \varepsilon 
        \right) p }
      }
\\&\leq
      \sum^{ \infty }_{ N=1 }
      \frac{
        \left(
          K_p
          \left( 
            \sum_{m=1}^{N^2}
        \left\|      
            \mathbbm{1}_{ \Omega_N }
            f( 
              Y^N_N
            )
            -
            \mathbb{E}\left[ 
              \mathbbm{1}_{ \Omega_N }
              f( 
                Y^{N}_N
              )
            \right]
        \right\|_{L^p}^2  
          \right)^{ \frac{1}{2} }
        \right)^p
      }{
        N^{ \left( 1 + \varepsilon 
        \right) p }
      }
\end{align*}
due to Lemma \ref{discretedavis} and therefore
\begin{equation}  \begin{split}
&
  \mathbb{E}\left[
    \left( 
      \sup_{ N \in \mathbb{N} }
      \frac{
        \left|      
          \sum^{ N^2 }_{ m=1 }
          \left(
            \mathbbm{1}_{ \Omega_N^m }
            \cdot
            f( 
              Y^{N,m}_N
            )
            -
            \mathbb{E}\left[ 
              \mathbbm{1}_{ \Omega_N }
              f( 
                Y^{N}_N
              )
            \right]
          \right)
        \right| 
      }{
        N^{ \left( 1 + \varepsilon 
        \right) }
      }
    \right)^p \;
  \right] 
\\&\leq
      \sum^{ \infty }_{ N=1 }
      \frac{
        \left(
          K_p \cdot N 
          \cdot
        \left\|      
            \mathbbm{1}_{ \Omega_N }
            f( 
              Y^N_N
            )
            -
            \mathbb{E}\left[ 
              \mathbbm{1}_{ \Omega_N }
              f( 
                Y^{N}_N
              )
            \right]
        \right\|_{L^p} 
        \right)^p
      }{
        N^{ \left( 1 + \varepsilon 
        \right) p }
      }
\\&=
      \sum^{ \infty }_{ N=1 }
      \left(
      \frac{
          \left( K_p \right)^p
          \cdot
        \left\|      
            \mathbbm{1}_{ \Omega_N }
            f( 
              Y^N_N
            )
            -
            \mathbb{E}\left[ 
              \mathbbm{1}_{ \Omega_N }
              f( 
                Y^{N}_N
              )
            \right]
        \right\|_{L^p}^p 
      }{
        N^{ \varepsilon p }
      }
    \right)
\\&\leq
   \left( K_p \right)^p
    \left(
      \sum^{ \infty }_{ N=1 }
        N^{ - \varepsilon p }
    \right)
  \left( 
  \sup_{ N \in \mathbb{N} }
  \mathbb{E}\left[
        \left|      
            \mathbbm{1}_{ \Omega_N }
            f( 
              Y^N_N
            )
            -
            \mathbb{E}\left[ 
              \mathbbm{1}_{ \Omega_N }
              f( 
                Y^{N}_N
              )
            \right]
        \right|^p 
  \right]
  \right)
\end{split}     \end{equation}
for every $ p \in [2,\infty) $
and every $ \varepsilon \in (0,\infty) $.
Hence, we obtain
\begin{align*}
&
  \mathbb{E}\left[
    \left( 
      \sup_{ N \in \mathbb{N} }
      \frac{
        \left|      
          \sum^{ N^2 }_{ m=1 }
          \left(
            \mathbbm{1}_{ \Omega_N^m }
            \cdot
            f( 
              Y^{N,m}_N
            )
            -
            \mathbb{E}\left[ 
              \mathbbm{1}_{ \Omega_N }
              f( 
                Y^{N}_N
              )
            \right]
          \right)
        \right| 
      }{
        N^{ \left( 1 + \varepsilon 
        \right) }
      }
    \right)^p \;
  \right] 
\\&\leq
     \left( 2^p
       \left( K_p \right)^p
     \right)
    \left(
      \sum^{ \infty }_{ N=1 }
        N^{ - \varepsilon p }
    \right)
  \left( 
  \sup_{ N \in \mathbb{N} }
  \mathbb{E}\left[
        \left|      
            \mathbbm{1}_{ \Omega_N }
            f( 
              Y^N_N
            )
        \right|^p 
  \right]
  \right)
\\
  &\leq
  \left( 2^p
    \left( K_p \right)^p
  \right)
  \left(
    \sum^{ \infty }_{ N=1 }
    N^{ - \varepsilon p }
  \right)
  \left( 
    \sup_{ N \in \mathbb{N} }
    \mathbb{E}\left[
      \mathbbm{1}_{ \Omega_N }
      L^p
      \left(
        1 + \left| Y^N_N \right|^{\delta}
      \right)^p 
    \right]
  \right)
\\
  &\leq
  \left( 2 L K_p \right)^p
  \left(
    \sum^{ \infty }_{ N=1 }
    N^{ - \varepsilon p }
  \right)
  \left( 
    \sup_{ N \in \mathbb{N} }
    \mathbb{E}\left[
      \mathbbm{1}_{ \Omega_N }
      \left(
        1 + \left| Y^N_N \right|^{\delta}
      \right)^p 
    \right]
  \right)
\end{align*}
and
\begin{align*}
&
  \mathbb{E}\left[
    \left( 
      \sup_{ N \in \mathbb{N} }
      \frac{
        \left|      
          \sum^{ N^2 }_{ m=1 }
          \left(
            \mathbbm{1}_{ \Omega_N^m }
            \cdot
            f( 
              Y^{N,m}_N
            )
            -
            \mathbb{E}\left[ 
              \mathbbm{1}_{ \Omega_N }
              f( 
                Y^{N}_N
              )
            \right]
          \right)
        \right| 
      }{
        N^{ \left( 1 + \varepsilon 
        \right) }
      }
    \right)^p \;
  \right]
\\
  &\leq
  \left( 4 L K_p \right)^p
  \left(
    \sum^{ \infty }_{ N=1 }
    N^{ - \varepsilon p }
  \right)
  \left(
    1 + 
    \sup_{ N \in \mathbb{N} }
    \mathbb{E}\left[
      \mathbbm{1}_{ \Omega_N }
      \left| Y^N_N \right|^{p \delta}
    \right]
  \right)
  < \infty
\end{align*}
for every $ p \in 
\left( \frac{1}{\varepsilon} , \infty
\right) $ 
and every $ \varepsilon \in (0,1) $
due to Corollary~\ref{c:mom_Euler}.
This implies
\begin{equation}\label{pfull2}
  \mathbb{P}\bigg[
    \bigg\{ 
      \omega \in \Omega 
      \, \bigg|
      \sup_{ N \in \mathbb{N} }
      \frac{ 
      \left|      
        \sum^{ N^2 }_{ m=1 }
        \left(
          \mathbbm{1}_{ \Omega_N^m }(\omega)
          \cdot
          f( 
            Y^{N,m}_N(\omega)
          )
          -
          \mathbb{E}\left[ 
            \mathbbm{1}_{ \Omega_N }
            f( 
              Y^{N}_N
            )
          \right]
        \right)
      \right| 
      }{
        N^{ \left( 1 + \varepsilon \right) }
      }
      < \infty
    \bigg\}
  \bigg] 
  = 1
\end{equation}
for every $ \varepsilon \in (0,\infty) $
(see also Lemma~2.1 in 
Kloeden \& Neuenkirch~\cite{KN07}).
Putting together \eqref{pfull1}
and \eqref{pfull2} shows
$
  \mathbb{P}[ \tilde{\Omega} ]
  = 1
$. This completes the proof of 
Lemma~\ref{lemm_omega}.
\end{proof}
%
%
%
\subsection{Proof of Lemma \ref{lemm_ew}}
\label{sec:modifiedweak}
It is somewhat inconvenient to compare the exact solution, which
is a continuous time process, 
with the Euler approximations,
which are time-discrete
stochastic processes.
Therefore, we consider the following interpolation process of the
Euler approximation.
Let $ \tilde{Y}^N \colon [0,T] \times \Omega \rightarrow \mathbb{R} $,
$ N \in \mathbb{N} $, be given by
\begin{equation}    \label{eq:interpolation}
  \tilde{Y}_t^N(\omega) 
  :=
  Y_n^N(\omega) + 
  \left( t - t_n^N \right) \cdot \mu( Y_n^N(\omega) )
  + \sigma( Y_n^N(\omega) ) 
  \cdot \left( W_t(\omega) - W_{t_n^N}(\omega) \right)
\end{equation}
for every $ t \in [ t_n^N, t_{n+1}^N ] $,
$ n \in \{ 0, 1, \ldots, N-1 \} $,
$ \omega \in \Omega $
and every $ N \in \mathbb{N} $.
Note that $\tilde{Y}_{t_n^N}^N(\omega) = 
Y_n^N(\omega) $ for every $ \omega \in \Omega $,
$n\in\{0,1,\ldots,N\}$ and every $N\in\mathbb{N}$.
Before we prove Lemma~\ref{lemm_ew},
we show that the restricted moments of the interpolation processes
are uniformly bounded.
\begin{lemma}  \label{l:mom_Ytilde}
  Let $ \tilde{Y}^N \colon [0,T] \times
  \Omega \rightarrow \mathbb{R} $
  for $ N \in \mathbb{N}$
  be given by~\eqref{eq:interpolation}.
  Then we have     
  \begin{equation}  \label{eq:mom_Ytilde}
    \sup_{ N\in\mathbb{N} }
    \sup_{ 0 \leq t \leq T }
    \mathbb{E}\left[
      \mathbbm{1}_{ \Omega_{ N, \lfloor
        \frac{ t N }{ T }
      \rfloor } }  
      \bigl| \tilde{Y}_t^N \bigr|^p
    \right]
    <\infty
  \end{equation}
  for every $p\in[1,\infty)$.
\end{lemma}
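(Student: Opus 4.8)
The plan is to control $\|\mathbbm{1}_{\Omega_{N,n}}\,\tilde{Y}^N_t\|_{L^p}$ uniformly in $N\in\mathbb{N}$ and $t\in[0,T]$, where throughout $n:=\lfloor tN/T\rfloor$. For $t=T$ one has $n=N$ and $\tilde{Y}^N_T=Y^N_N$, so the bound is immediate from Corollary~\ref{c:mom_Euler}; for $t<T$ one has $t\in[t^N_n,t^N_{n+1}]$, and I would start from the interpolation formula~\eqref{eq:interpolation} and split by the triangle inequality in $L^p$:
\[
  \bigl\|\mathbbm{1}_{\Omega_{N,n}}\tilde{Y}^N_t\bigr\|_{L^p}
  \leq
  \bigl\|\mathbbm{1}_{\Omega_{N,n}}Y^N_n\bigr\|_{L^p}
  + \bigl(t - t^N_n\bigr)\bigl\|\mathbbm{1}_{\Omega_{N,n}}\mu(Y^N_n)\bigr\|_{L^p}
  + \bigl\|\mathbbm{1}_{\Omega_{N,n}}\,\sigma(Y^N_n)\bigl(W_t - W_{t^N_n}\bigr)\bigr\|_{L^p} .
\]
It then suffices to bound each of the three summands uniformly in $N$ and $t$.

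The first summand is handled directly by Corollary~\ref{c:mom_Euler}, since $n\in\{0,1,\dots,N\}$. For the second summand I would use the polynomial growth~\eqref{eq:polynomial_growth} of $\mu$ together with $\mathbbm{1}_{\Omega_{N,n}}|Y^N_n|^{\delta}=(\mathbbm{1}_{\Omega_{N,n}}|Y^N_n|)^{\delta}$ to get $\|\mathbbm{1}_{\Omega_{N,n}}\mu(Y^N_n)\|_{L^p}\leq L+L\,\|\mathbbm{1}_{\Omega_{N,n}}Y^N_n\|_{L^{\delta p}}^{\delta}$, which is uniformly bounded by Corollary~\ref{c:mom_Euler} applied with the exponent $\delta p$; combined with $t-t^N_n\leq T/N$ this summand is uniformly bounded (it even tends to $0$ as $N\to\infty$). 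It is worth noting that here one genuinely needs the higher-moment bound from Corollary~\ref{c:mom_Euler}: the crude pointwise bound $\mathbbm{1}_{\Omega_{N,n}}|Y^N_n|\leq r_N\leq N^{1/4}/L$ (from Lemma~\ref{lemm_1} and~\eqref{omega_defvw}) would only give a bound of order $N^{\delta/4-1}$ for the drift term, which is useless once $\delta>4$.

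For the third summand I would instead exploit that $\sigma$ is globally Lipschitz. By~\eqref{eq:sigmalip}, Lemma~\ref{lemm_1} and the definition~\eqref{omega_defvw} of $\Omega_{N,n}$, one has the deterministic bound $\mathbbm{1}_{\Omega_{N,n}}|\sigma(Y^N_n)|\leq|\sigma(0)|+L\,\mathbbm{1}_{\Omega_{N,n}}|Y^N_n|\leq|\sigma(0)|+L\,r_N\leq|\sigma(0)|+N^{1/4}$, while the increment $W_t-W_{t^N_n}$ is normally distributed with variance $t-t^N_n\leq T/N$, so Lemma~\ref{l:mom_normal} gives $\|W_t-W_{t^N_n}\|_{L^p}\leq p\,\|W_t-W_{t^N_n}\|_{L^2}=p\,(t-t^N_n)^{1/2}\leq p\,(T/N)^{1/2}$. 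Multiplying, the third summand is at most $p\sqrt{T}\,\bigl(|\sigma(0)|\,N^{-1/2}+N^{-1/4}\bigr)\leq p\sqrt{T}\,(|\sigma(0)|+1)$, uniformly in $N\geq1$ and $t$. (An alternative here is to observe that $\mathbbm{1}_{\Omega_{N,n}}$ and $Y^N_n$ are $\mathcal{F}_{t^N_n}$-measurable whereas $W_t-W_{t^N_n}$ is independent of $\mathcal{F}_{t^N_n}$, factor the expectation, and invoke Corollary~\ref{c:mom_Euler} directly; this avoids the deterministic size bound on $|Y^N_n|$.) Adding the three estimates, raising to the $p$-th power and taking the supremum over $N$ and $t$ yields~\eqref{eq:mom_Ytilde}.

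The proof is therefore not genuinely difficult once the earlier machinery is in place; the only point requiring a moment's care is matching the right compensating power of $N$ to each term — using Corollary~\ref{c:mom_Euler} with exponent $\delta p$ against the factor $T/N$ in the drift term, and the linear growth of $\sigma$ together with $r_N\leq N^{1/4}/L$ against the factor $(T/N)^{1/2}$ in the diffusion term. Everything else reduces to the triangle inequality and the already-established Corollary~\ref{c:mom_Euler}, Lemma~\ref{lemm_1} and Lemma~\ref{l:mom_normal}.
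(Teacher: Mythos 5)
Your argument is correct and rests on the same two workhorses as the paper's proof of this lemma: Corollary~\ref{c:mom_Euler} for the restricted moments of $Y^N_n$, and Lemma~\ref{l:mom_normal} for the moments of the Brownian increment. The paper decouples $Y^N_n$ from $W_t-W_{t^N_n}$ in one stroke, using the pointwise factorization $|\tilde{Y}^N_t|\leq(1+|Y^N_n|^\delta)(1+LT+L|W_t-W_{t^N_n}|)$ followed by a single H\"older step $L^p\to L^{2p}\times L^{2p}$ and then Corollary~\ref{c:mom_Euler}; you instead split by the triangle inequality into three summands and bound each separately. Your drift term is handled in substance as in the paper (via the $L^{\delta p}$-moment of $Y^N_n$, and your remark that the crude deterministic bound $\mathbbm{1}_{\Omega_{N,n}}|Y^N_n|\leq r_N$ would be insufficient here once $\delta>4$ is accurate). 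The diffusion term is where you genuinely deviate: you reach back to Lemma~\ref{lemm_1} and the definition~\eqref{omega_defvw} of $\Omega_{N,n}$ to get the deterministic bound $\mathbbm{1}_{\Omega_{N,n}}|Y^N_n|\leq r_N\leq N^{1/4}/L$, and exploit that the exponent $\tfrac14$ built into $r_N$ in~\eqref{eq:defrN} is offset by the $(T/N)^{1/2}$ from $W_t-W_{t^N_n}$; the paper just applies H\"older again and cites Corollary~\ref{c:mom_Euler} for this factor too. Both routes close the argument; yours makes the compensating powers of $N$ explicit per term, the paper's avoids invoking the fine structure of $r_N$ beyond what is already encoded in Corollary~\ref{c:mom_Euler}.
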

\begin{proof}[Proof
of Lemma~\ref{l:mom_Ytilde}]
Inserting the definition~\eqref{eq:interpolation} of the interpolation
process and the polynomial growth of $\mu$ and 
$\sigma$ shows
  \begin{align*}
   \left\|
      \mathbbm{1}_{\Omega_{N,n}}
      \left| \tilde{Y}_t^N \right|
    \right\|_{L^p}
  \leq
    \left\|
      \mathbbm{1}_{\Omega_{N,n}}
        \left( 
          1 + \left| Y_n^N \right|^{\delta} 
        \right) 
        \biggl(
          1 + LT + L \left|W_{t} - W_{t_{n}^N}\right|
        \biggr)
    \right\|_{L^p}
  \end{align*}
  for every $ t \in [t_n^N, t_{n+1}^N] $, $ n \in \{0,1,\ldots,N-1\} $
  and every $ N \in \mathbb{N} $.
  Now we apply H\"older's inequality, the triangle inequality and
  Lemma~\ref{l:mom_normal} to arrive at
  \begin{equation}  \begin{split}           \label{eq:last_for_tilde_Y}
   \left\|
      \mathbbm{1}_{\Omega_{N,n}}
      \left| \tilde{Y}_t^N \right| 
    \right\|_{L^p}
  &\leq
    \left(
      1+
      \left\|
        \mathbbm{1}_{\Omega_{N,n}}
        \left| Y_n^N \right|^{\delta} 
      \right\|_{L^{2p}}
    \right)
    \cdot
    \left(
       1 + LT + L \left\|W_{t} - W_{t_{n}^N}\right\|_{L^{2p}}
    \right)
  \\&\leq
    \left(
      1+
      \left(
        \mathbb{E}\left[\mathbbm{1}_{\Omega_{N,n}}
                  \left| Y_n^N \right|^{2\delta p} 
                  \right]
      \right)^{ \frac{1}{2p} }
    \right)
    \cdot
    \left(
       1 + LT + L 2p \sqrt{T}
    \right)
  \end{split}     \end{equation}
  for every $ t \in [t_n^N, t_{n+1}^N] $, $ n \in \{0,1,\ldots,N-1\} $
  and every $ N \in \mathbb{N} $.
  The right-hand side of~\eqref{eq:last_for_tilde_Y} is uniformly bounded
  in $n\in\{0,1,\ldots,N\}$ and $N\in\mathbb{N}$
  according to
  Corollary~\ref{c:mom_Euler}.
  This completes the proof.
\end{proof}

\begin{proof}[Proof of Lemma \ref{lemm_ew}]
Let
$ X^{s,x} \colon [s,T] \times 
\Omega \rightarrow \mathbb{R} $,
$ s \in [0,T] $, $ x \in \mathbb{R} $, 
be a family of adapted stochastic processes
with continuous sample paths given by
\begin{equation}
  X_t^{s, x}
  =
  x
  + 
  \int_{s}^t \mu\left( X_u^{s, x} \right) du
  +
  \int_{s}^t \sigma\left( X_u^{s, x} \right) dW_u
  \qquad \mathbb{P}\text{-a.s.}
\end{equation}
for every $ t \in [s, T] $, 
$ s \in [0, T] $ and every $ x \in \mathbb{R} $. 
Moreover, assume that
the mapping
$
  X^{ s, \cdot }_t( \cdot )
  \colon \mathbb{R} \times
  \Omega \rightarrow \mathbb{R}
$
with
$
  (x, \omega) \mapsto
  X^{ s, x }_t( \omega )
$
for all $ x \in \mathbb{R} $,
$ \omega \in \Omega $
is
$
  ( 
    \mathcal{B}( \mathbb{R} )
    \otimes
    \mathcal{F}_t
  )/(
    \mathcal{B}( \mathbb{R} )
  )
$-measurable
for every $ s, t \in [0,T] $
with $ s \leq t $ and
assume that the mapping
$
  X^{ s, \cdot }_t(\omega)
  \colon
  \mathbb{R} \rightarrow \mathbb{R}
$
with
$
  x \mapsto
  X^{ s, x }_t(\omega)
$
for all $ x \in \mathbb{R} $
is continuous
for every $ s, t \in [0,T] $
with $ s \leq t $
and every $ \omega \in \Omega $.
We will show in~\eqref{e_estimates} below that the
difference between $X_{t_{n+1}}^{t_n^N,Y_n^N}$ and
$Y_{n+1}^N$ is of order $O\bigl(\tfrac{1}{N^2}\bigr)$
in a suitable weak sense for every $n\in\{0,1,\ldots,N-1\}$.
Summing over 
$n\in\{0,1,\ldots,N-1\}$ 
for each $ N \in \mathbb{N} $
will then prove
the assertion.

First we need several preparations.
According to Theorem 2.6.4 
in \cite{GS72}, there are real numbers
$ \kappa_p \in [0,\infty) $, $ p \in \left\{2,4,6,
\dots \right\}$, such that
\begin{equation}
  \mathbb{E}\Bigl[
    \left| X^{ s, x }_t \right|^p
  \Bigr]
\leq
  \kappa_p \left( 1 + \left| x \right|^p \right)
\end{equation}
holds for every $ s, t \in [0,T]$
with $ s \leq t $ 
and every 
$ p \in \left\{ 2, 4, 6, \dots
\right\} $, $ x \in \mathbb{R} $.
This implies
\begin{equation}  \begin{split}  \label{eq:for_mom_XY}
  \sup_{t_n^N\leq t\leq T}
  \mathbb{E}\left[ \mathbbm{1}_{\Omega_{N,n}}
                   \left|X_t^{t_n^N,Y_n^N}\right|^p
            \right]
  &= \sup_{t_n^N\leq t\leq T}
  \mathbb{E}\left[ \mathbbm{1}_{\Omega_{N,n}}
                   \mathbb{E}\left[
                        \left|X_t^{t_n^N,Y_n^N}\right|^p
                        \bigg| \mathcal{F}_{t_n^N}
                   \right]
            \right]
  \\ &\leq \kappa_p \cdot
  \mathbb{E}\Bigl[ \mathbbm{1}_{\Omega_{N,n}}
                   \left(
                        1 + \left|Y_n^N\right|^p
                   \right)
            \Bigr]
  \\ &\leq \kappa_p \left(1+
  \mathbb{E}\left[ \mathbbm{1}_{\Omega_{N,n}}
                   \left| Y_n^N \right|^p
            \right]
              \right)
\end{split}     \end{equation}
for every $p\in \left\{2,4,6,\dots\right\}$, 
$n\in\{0,1,\ldots,N\}$
and every $N\in\mathbb{N}$.
Corollary~\ref{c:mom_Euler}
hence
implies
\begin{equation}  \label{eq:mom_XY}
  \sup_{ N\in\mathbb{N} } 
  \sup_{ n\in\{0,1,\ldots,N\} } 
  \sup_{ t_n^N \leq t\leq T }
  \mathbb{E}\left[ \mathbbm{1}_{\Omega_{N,n}}
                   \left|X_t^{t_n^N,Y_n^N}\right|^p
            \right]
  <\infty
\end{equation}
for every $p\in [1,\infty)$.

Now define $ u: [0, T] \times \mathbb{R} \rightarrow \mathbb{R} $
by $ u(t,x) = \mathbb{E} \bigl[ f\bigl( X_T^{t,x} \bigr) \bigr] $
for every $ t \in [0, T] $ and every
$ x \in \mathbb{R} $.
Moreover, let the $n$-th partial derivative 
of $u$ with respect to the second
argument be the function
$ u_n \colon [0,T] \times \mathbb{R} 
\rightarrow \mathbb{R} $
defined through
\begin{equation}
  u_n(t,x) = 
  \left( \frac{\partial^n}{\partial x^n} u 
  \right)(t,x)
\end{equation}
for every $ n \in \left\{0,1,\dots, 4 \right\}$,
$ t \in [0,T] $ and every $ x \in \mathbb{R} $.
Additionally, we use
the functions
$ 
  u, \tilde{u} \colon 
  [0,T] \times \mathbb{R}
  \rightarrow \mathbb{R}
$
given by
\begin{equation}
  \tilde{u}(t,x)
  =
  u_1(t,x) \cdot \mu(x)
  + \frac{1}{2} u_2(t,x) \cdot \left( \sigma(x) \right)^2
\end{equation}
and
\begin{equation}
  \tilde{\tilde{u}}(t,x)
  = 
  \left( \frac{\partial}{\partial x} \tilde{u} \right)(t,x)
  \cdot \mu(x)
  + \frac{1}{2} \left( \frac{\partial^2}{\partial x^2} \tilde{u} \right)(t,x) \cdot \left( \sigma(x) \right)^2
\end{equation}
for every $ t \in [0,T] $ and every $ x \in \mathbb{R} $.
Moreover, let $ R \in [ 5 \delta, \infty ) $ 
be a real number which satisfies
\begin{align}       \label{eq:pol_growth_u}
  &\left| u(t,x) \right|
  \leq 
  R \left( 1 + \left|x\right|^R \right)
  ,
  &\left| u_1(t,x) \right|
  \leq 
  R \left( 1 + \left|x\right|^R \right) ,
  \nonumber
\\
  &\left| u_2(t,x) \right|
  \leq 
  R \left( 1 + \left|x\right|^R \right)
  ,
  &\left| u_3(t,x) \right|
  \leq 
  R \left( 1 + \left|x\right|^R \right) ,
  \nonumber
\\
  &\left| u_4(t,x) \right|
  \leq 
  R \left( 1 + \left|x\right|^R \right)
  ,
  &\left| \tilde{u}(t,x) \right|
  \leq 
  R \left( 1 + \left|x\right|^R \right) ,
\\
  &\left| \tilde{\tilde{u}}(t,x) \right|
  \leq 
  R \left( 1 + \left|x\right|^R \right)
  ,
  &\left| 
    \left( \frac{\partial}{\partial x} \tilde{u}
  \right)(t,x) \right|
  \leq 
  R \left( 1 + \left|x\right|^R \right)
  \nonumber
\end{align}
for every $ t \in [0, T] $ and every $ x \in \mathbb{R} $
(see also Corollary 2.8.1 and Theorem 2.8.1 in~\cite{GS72}). 
The existence of such a real
number can be shown by
exploiting \eqref{eq:polynomial_growth}, \eqref{eq:onesided}
and \eqref{eq:sigmalip}.
In our estimates, we will need the real number
$ C \in [0,\infty) $ defined by
\begin{multline}\label{c_definition}
  C
  :=
  \sup_{ N \in \mathbb{N} }
  \left(
    N^4 \cdot \mathbb{P}\left[ (\Omega_N)^c \right]
  \right)
  +
  \sup_{ N \in \mathbb{N} }
  \sup_{ 0 \leq u \leq T }
  \left\|
    \mathbbm{1}_{\Omega_{N,\lfloor \frac{u N}{T} \rfloor}}
    \left( 2 + \left| \tilde{Y}_u^N \right|^R \right)
  \right\|_{L^6}
\\
  + L + R + T
  +
  \sup_{ N \in \mathbb{N} }
  \sup_{ n \in \{0,1,\ldots, N\} }
  \sup_{ t_n^N \leq t \leq T }
  \left\|
    \mathbbm{1}_{\Omega_{N,n}}
    \left( 2 + 
      \left| X_t^{t_n^N, Y_n^N} \right|^R
    \right)
  \right\|_{ L^4 }
  < \infty .
\end{multline}
Indeed, $ C \in [0, \infty) $ is finite due to Lemma~\ref{lemm_omega},
Lemma~\ref{l:mom_Ytilde} and due to~\eqref{eq:mom_XY}.
Moreover, since
\begin{equation}
  \tilde{Y}_t^N 
  =
  Y_n^N +
  \int_{t_n^N}^t \mu( Y_n^N ) \, ds +
  \int_{t_n^N}^t \sigma( Y_n^N ) \, dW_s
  \qquad \mathbb{P}\text{-a.s.}
\end{equation}
holds for every $ t \in [ t_n^N, t_{n+1}^N ] $,
$ n \in \{ 0, 1, \ldots, N-1 \} $
and every $ N \in \mathbb{N} $, It{\^o}'s formula yields
\begin{equation}  \begin{split}
  u\!\left( 
    t_{n+1}^N, Y_{n+1}^N 
  \right)
  &=
  u\!\left( 
    t_{n+1}^N, \tilde{Y}_{t_{n+1}^N}^N 
  \right)
\\
  &=
  u\!\left( 
    t_{n+1}^N, Y_{n}^N
  \right)
  +
  \int_{t_{n}^N}^{t_{n+1}^N}
  u_1\!\left(
    t_{n+1}^N, \tilde{Y}_{s}^N
  \right)
  \mu( Y_n^N ) \, ds
\\
  &\quad +
  \int_{t_{n}^N}^{t_{n+1}^N}
  u_1\!\left(
    t_{n+1}^N, \tilde{Y}_{s}^N
  \right)
  \sigma( Y_n^N ) \, dW_s
\\
  &\quad + \frac{1}{2}
  \int_{t_{n}^N}^{t_{n+1}^N}
  u_2\!\left(
    t_{n+1}^N, \tilde{Y}_{s}^N
  \right) 
  \left(\sigma( Y_n^N )\right)^2 ds
  \qquad \mathbb{P}\text{-a.s.}
\end{split}     \end{equation}
for every $ n \in \{ 0, 1, \ldots, N-1 \} $ and every
$ N \in \mathbb{N} $.
Again It{\^o}'s formula yields
\begin{align}\label{all_integrals}
\nonumber
  u\!\left( 
    t_{n+1}^N, Y_{n+1}^N 
  \right)
  &=
  u\!\left( 
    t_{n+1}^N, Y_{n}^N 
  \right) 
  +
  \frac{T}{N}
  \tilde{u}\!\left( 
    t_{n+1}^N, Y_{n}^N 
  \right)
\\\nonumber
  &\quad +
  \int_{t_{n}^N}^{t_{n+1}^N} \!\!\!
  \int_{t_{n}^N}^{s}
  u_2\!\left(
    t_{n+1}^N, \tilde{Y}_{r}^N
  \right)
  \left(\mu( Y_n^N )\right)^2 dr \, ds
\\\nonumber
  &\quad +
  \int_{t_{n}^N}^{t_{n+1}^N} \!\!\!
  \int_{t_{n}^N}^{s}
  u_2\!\left(
    t_{n+1}^N, \tilde{Y}_{r}^N
  \right)
  \sigma( Y_n^N ) \, \mu( Y_n^N ) \, dW_r \, ds
\\
  &\quad +
  \int_{t_{n}^N}^{t_{n+1}^N} \!\!\!
  \int_{t_{n}^N}^{s}
  u_3\!\left(
    t_{n+1}^N, \tilde{Y}_{r}^N
  \right)
  \left( \sigma( Y_n^N ) \right)^2 \mu( Y_n^N ) \, dr \, ds
\\\nonumber
  & \quad +
  \int_{t_{n}^N}^{t_{n+1}^N}
  u_1\!\left(
    t_{n+1}^N, \tilde{Y}_{s}^N
  \right)
  \sigma( Y_n^N ) \, dW_s
\\\nonumber
  &\quad + \frac{1}{2}
  \int_{t_{n}^N}^{t_{n+1}^N} \!\!\!
  \int_{t_{n}^N}^{s}
  u_3\!\left(
    t_{n+1}^N, \tilde{Y}_{r}^N
  \right)
  \left( \sigma( Y_n^N ) \right)^3  dW_r \, ds
\\\nonumber
  &\quad + \frac{1}{4}
  \int_{t_{n}^N}^{t_{n+1}^N} \!\!\!
  \int_{t_{n}^N}^{s}
  u_4\!\left(
    t_{n+1}^N, \tilde{Y}_{r}^N
  \right)
  \left( \sigma( Y_n^N ) \right)^4  dr \, ds
  \qquad \mathbb{P}\text{-a.s.}
\end{align}
for every $ n \in \{ 0, 1, \ldots, N-1 \} $ and every
$ N \in \mathbb{N} $.

Now we estimate all non-stochastic integrals on the
right-hand side of \eqref{all_integrals} restricted
to the events 
$\Omega_{N,n+1}$
for $ n \in \left\{0,1,\dots,N-1\right\}$
and $N \in \mathbb{N}$.
For the first integral on the right
hand side of \eqref{all_integrals}, we obtain
\begin{equation}  \begin{split}
&
  \left\|
    \mathbbm{1}_{\Omega_{N,n+1}}
    \int_{t_n^N}^{t_{n+1}^N} \!\!\!
    \int_{t_n^N}^s
    u_2\!\left( t_{n+1}^N , \tilde{Y}_r^N \right)
    \left( \mu( Y_n^N ) \right)^2
    dr \, ds
  \right\|_{L^1}
\\&\leq 
  \int_{t_n^N}^{t_{n+1}^N} \!\!\!
  \int_{t_n^N}^s
  \left\|
    \mathbbm{1}_{\Omega_{N,n+1}}
    u_2\!\left( t_{n+1}^N , \tilde{Y}_r^N \right)
    \left( \mu( Y_n^N ) \right)^2
  \right\|_{L^1}
  dr \, ds
\\&\leq 
  \int_{t_n^N}^{t_{n+1}^N} \!\!\!
  \int_{t_n^N}^s
  2 L^2
  \left\|
    \mathbbm{1}_{\Omega_{N,n+1}}
    u_2\!\left( t_{n+1}^N , \tilde{Y}_r^N \right)
    \left( 1 + \left| Y_n^N \right|^{2 \delta} \right)
  \right\|_{L^1}
  dr \, ds
\end{split}     \end{equation}
and, using the polynomial growth estimate~\eqref{eq:pol_growth_u}
of $u_2$,
\begin{equation}  \begin{split}
&
  \left\|
    \mathbbm{1}_{\Omega_{N,n+1}}
    \int_{t_n^N}^{t_{n+1}^N} \!\!\!
    \int_{t_n^N}^s
    u_2\!\left( t_{n+1}^N , \tilde{Y}_r^N \right)
    \left( \mu( Y_n^N ) \right)^2
    dr \, ds
  \right\|_{L^1}
\\&\leq 
  \int_{t_n^N}^{t_{n+1}^N} \!\!\!
  \int_{t_n^N}^s
  2 L^2 R
  \left\|
    \mathbbm{1}_{\Omega_{N,n+1}}
    \left( 1 + \left| \tilde{Y}_r^N \right|^R \right)
    \left( 1 + \left| Y_n^N \right|^{2 \delta} \right)
  \right\|_{L^1}
  dr \, ds
\\&\leq 
  \int_{t_n^N}^{t_{n+1}^N} \!\!\!
  \int_{t_n^N}^s
  2 L^2 R
  \left\|
    \mathbbm{1}_{\Omega_{N,n+1}}
    \left( 1 + \left| \tilde{Y}_r^N \right|^R \right)
    \left( 2 + \left| Y_n^N \right|^R \right)
  \right\|_{L^1}
  dr \, ds
\end{split}     \end{equation}
and, applying H\"older's 
inequality
and the definition~\eqref{c_definition} 
of $C \in [0,\infty)$,
\begin{align}\label{int_1}
&\nonumber
  \left\|
    \mathbbm{1}_{\Omega_{N,n+1}}
    \int_{t_n^N}^{t_{n+1}^N} \!\!\!
    \int_{t_n^N}^s
    u_2\!\left( t_{n+1}^N , \tilde{Y}_r^N \right)
    \left( \mu( Y_n^N ) \right)^2
    dr \, ds
  \right\|_{L^1}
\\&\leq\nonumber 
  2 L^2 R
  \int_{t_n^N}^{t_{n+1}^N} \!\!\!
  \int_{t_n^N}^s
  \left\|
    \mathbbm{1}_{\Omega_{N,n+1}}
    \left( 2 + \left| \tilde{Y}_r^N \right|^R \right)
  \right\|_{L^2}
  \left\|  
    \mathbbm{1}_{\Omega_{N,n+1}}
    \left( 2 + \left| Y_n^N \right|^R \right)
  \right\|_{L^2}
  dr \, ds
\\&\leq
  2 L^2 R
  \left(
    \sup_{ 0 \leq u \leq T }
    \left\|
      \mathbbm{1}_{\Omega_{N,\lfloor \frac{u N}{T} \rfloor}}
      \left( 2 + \left| \tilde{Y}_u^N \right|^R \right)
    \right\|_{L^2}^2
  \right)
  \frac{1}{2} 
  \left( \frac{T}{N} \right)^2
\\&\leq\nonumber
  L^2 R T^2 C^2 N^{-2}
  \leq
  C^7 N^{-2}
\end{align}
for every $ n \in \{ 0, 1, \ldots, N-1 \} $ and every
$ N \in \mathbb{N} $.
In addition, we have
\begin{align*}
&
  \left\|
    \mathbbm{1}_{\Omega_{N,n+1}}
    \int_{t_n^N}^{t_{n+1}^N} \!\!\!
    \int_{t_n^N}^s
    u_3\!\left( t_{n+1}^N , \tilde{Y}_r^N \right)
    \left( \sigma( Y_n^N ) \right)^2 \mu( Y_n^N )
    \, dr \, ds
  \right\|_{L^1}
\\&\leq
  \int_{t_n^N}^{t_{n+1}^N} \!\!\!
  \int_{t_n^N}^s
  \left\|
    \mathbbm{1}_{\Omega_{N,n+1}}
    u_3\!\left( t_{n+1}^N , \tilde{Y}_r^N \right)
    \left( \sigma( Y_n^N ) \right)^2
    \mu( Y_n^N )
  \right\|_{L^1}
  dr \, ds
\\&\leq
  \int_{t_n^N}^{t_{n+1}^N} \!\!\!
  \int_{t_n^N}^s
  2 L^3
  \left\|
    \mathbbm{1}_{\Omega_{N,n+1}}
    u_3\!\left( t_{n+1}^N , \tilde{Y}_r^N \right)
    \left( 1 + \left| Y_n^N \right|^2 \right)
    \left( 1 + \left| Y_n^N \right|^{\delta} \right)
  \right\|_{L^1}
  dr \, ds
\end{align*}
and
\begin{align*}
&
  \left\|
    \mathbbm{1}_{\Omega_{N,n+1}}
    \int_{t_n^N}^{t_{n+1}^N} \!\!\!
    \int_{t_n^N}^s
    u_3\!\left( t_{n+1}^N , \tilde{Y}_r^N \right)
    \left( \sigma( Y_n^N ) \right)^2 \mu( Y_n^N )
    \, dr \, ds
  \right\|_{L^1}
\\&\leq 
  \int_{t_n^N}^{t_{n+1}^N} \!\!\!
  \int_{t_n^N}^s \!
  2 L^3 R
  \left\|
    \mathbbm{1}_{\Omega_{N,n+1}}
    \left( 1 + \left| \tilde{Y}_r^N \right|^R \right)
    \left( 1 + \left| Y_n^N \right|^2 \right)
    \left( 1 + \left| Y_n^N \right|^{\delta} \right)
  \right\|_{L^1} \!\!\!
  dr \, ds
\\&\leq 
  \int_{t_n^N}^{t_{n+1}^N} \!\!\!
  \int_{t_n^N}^s \!
  2 L^3 R
  \left\|
    \mathbbm{1}_{\Omega_{N,n+1}}
    \left( 1 + \left| \tilde{Y}_r^N \right|^R \right)
    \left( 2 + \left| Y_n^N \right|^R \right)
    \left( 2 + \left| Y_n^N \right|^R \right)
  \right\|_{L^1} \!\!\!
  dr \, ds
\end{align*}
and
\begin{align}\label{int_2}
\nonumber
&
  \left\|
    \mathbbm{1}_{\Omega_{N,n+1}}
    \int_{t_n^N}^{t_{n+1}^N} \!\!\!
    \int_{t_n^N}^s
    u_3\!\left( t_{n+1}^N , \tilde{Y}_r^N \right)
    \left( \sigma( Y_n^N ) \right)^2 \mu( Y_n^N )
    \, dr \, ds
  \right\|_{L^1}
\\&\leq\nonumber 
  \int_{t_n^N}^{t_{n+1}^N} \!\!\!
  \int_{t_n^N}^s
  2 L^3 R
  \left\|
    \mathbbm{1}_{\Omega_{N,n+1}}
    \left( 2 + \left| \tilde{Y}_r^N \right|^R \right)
    \left( 2 + \left| Y_n^N \right|^R \right)^2
  \right\|_{L^1}
  dr \, ds
\\&\leq
  2 L^3 R
  \left(
    \sup_{ 0 \leq u \leq T }
    \left\|
      \mathbbm{1}_{\Omega_{N,\lfloor \frac{u N}{T} \rfloor}}
      \left( 2 + \left| \tilde{Y}_u^N \right|^R \right)
    \right\|_{L^3}^3
  \right)
  \frac{1}{2} 
  \left( \frac{T}{N} \right)^2
\\&\leq\nonumber
  L^3 R T^2 C^3 N^{-2}
  \leq
  C^9 N^{-2}
\end{align}
for every $ n \in \{ 0, 1, \ldots, N-1 \} $ and every
$ N \in \mathbb{N} $.
Next we use the estimates $|\sigma(x)|\leq L(1+|x|)$
and $(1+|x|)^4\leq 8(1+x^4)$ for every $x\in\mathbb{R}$
to obtain
\begin{equation}  \begin{split}
&
  \left\|
    \mathbbm{1}_{\Omega_{N,n+1}}
    \frac{1}{4}
    \int_{t_n^N}^{t_{n+1}^N} \!\!\!
    \int_{t_n^N}^s
    u_4\!\left( t_{n+1}^N , \tilde{Y}_r^N \right)
    \left( \sigma( Y_n^N ) \right)^4
    dr \, ds
  \right\|_{L^1}
\\&\leq
  \frac{1}{4} 
  \int_{t_n^N}^{t_{n+1}^N} \!\!\!
  \int_{t_n^N}^s
  \left\|
    \mathbbm{1}_{\Omega_{N,n+1}}
    u_4\!\left( t_{n+1}^N , \tilde{Y}_r^N \right)
    \left( \sigma( Y_n^N ) \right)^4
  \right\|_{L^1}
  dr \, ds
\\&\leq
  \int_{t_n^N}^{t_{n+1}^N} \!\!\!
  \int_{t_n^N}^s
  2 L^4
  \left\|
    \mathbbm{1}_{\Omega_{N,n+1}}
    u_4\!\left( t_{n+1}^N , \tilde{Y}_r^N \right)
    \left( 1 + \left| Y_n^N \right|^{4} \right)
  \right\|_{L^1}
  dr \, ds
\end{split}     \end{equation}
and
\begin{equation}  \begin{split}
&
  \left\|
    \mathbbm{1}_{\Omega_{N,n+1}}
    \frac{1}{4}
    \int_{t_n^N}^{t_{n+1}^N} \!\!\!
    \int_{t_n^N}^s
    u_4\!\left( t_{n+1}^N , \tilde{Y}_r^N \right)
    \left( \sigma( Y_n^N ) \right)^4
    dr \, ds
  \right\|_{L^1}
\\&\leq 
  2 L^4 R
  \int_{t_n^N}^{t_{n+1}^N} \!\!\!
  \int_{t_n^N}^s
  \left\|
    \mathbbm{1}_{\Omega_{N,n+1}}
    \left( 1 + \left| \tilde{Y}_r^N \right|^R \right)
    \left( 1 + \left| Y_n^N \right|^{4} \right)
  \right\|_{L^1}
  dr \, ds
\\&\leq 
  2 L^4 R
  \int_{t_n^N}^{t_{n+1}^N} \!\!\!
  \int_{t_n^N}^s
  \left\|
    \mathbbm{1}_{\Omega_{N,n+1}}
    \left( 1 + \left| \tilde{Y}_r^N \right|^R \right)
    \left( 2 + \left| Y_n^N \right|^R \right)
  \right\|_{L^1}
  dr \, ds
\end{split}     \end{equation}
and
\begin{align}\label{int_3}
&\nonumber
  \left\|
    \mathbbm{1}_{\Omega_{N,n+1}}
    \frac{1}{4}
    \int_{t_n^N}^{t_{n+1}^N} \!\!\!
    \int_{t_n^N}^s
    u_4\!\left( t_{n+1}^N , \tilde{Y}_r^N \right)
    \left( \sigma( Y_n^N ) \right)^4
    dr \, ds
  \right\|_{L^1}
\\&\leq\nonumber 
  2 L^4 R
  \int_{t_n^N}^{t_{n+1}^N} \!\!\!
  \int_{t_n^N}^s
  \left\|
    \mathbbm{1}_{\Omega_{N,n+1}}
    \left( 2 + \left| \tilde{Y}_r^N \right|^R \right)
  \right\|_{L^2}
  \left\|  
    \mathbbm{1}_{\Omega_{N,n+1}}
    \left( 2 + \left| Y_n^N \right|^R \right)
  \right\|_{L^2}
  dr \, ds
\\&\leq
  2 L^4 R
  \left(
    \sup_{ 0 \leq u \leq T }
    \left\|
      \mathbbm{1}_{\Omega_{N,\lfloor \frac{u N}{T} \rfloor}}
      \left( 2 + \left| \tilde{Y}_u^N \right|^R \right)
    \right\|_{L^2}^2
  \right)
  \frac{1}{2} 
  \left( \frac{T}{N} \right)^2
\\&\leq\nonumber
  L^4 R T^2 C^2 N^{-2}
  \leq
  C^9 N^{-2}
\end{align}
for every $ n \in \{ 0, 1, \ldots, N-1 \} $ and every
$ N \in \mathbb{N} $.
Combining \eqref{all_integrals}, \eqref{int_1}, \eqref{int_2}
and \eqref{int_3} hence yields
\begin{align*}
  &\bigg|
    \mathbb{E}\!\left[
      \mathbbm{1}_{\Omega_{N,n+1}}
      u\!\left( 
        t_{n+1}^N, Y_{n+1}^{N} 
      \right)
    \right]
    -
    \mathbb{E}\!\left[
      \mathbbm{1}_{\Omega_{N,n+1}} \left\{
        u\!\left(
          t_{n+1}^N,
          Y_n^N
        \right) +
        \frac{T}{N}
        \tilde{u}\!\left( 
          t_{n+1}^N, Y_n^N
        \right)
      \right\}
    \right]
  \bigg|
\\
  &\leq 
  3 C^9 N^{-2}
  +
  \left|
  \mathbb{E}\left[ \mathbbm{1}_{\Omega_{N,n+1}} 
  \int_{t_{n}^N}^{t_{n+1}^N} \!\!\!
  \int_{t_{n}^N}^{s}
  u_2\!\left(
    t_{n+1}^N, \tilde{Y}_{r}^N
  \right)
  \sigma( Y_n^N ) \, \mu( Y_n^N ) \, dW_r \, ds
  \right]
  \right|
\\
  &\qquad\qquad\quad +
  \left|
  \mathbb{E}\left[ \mathbbm{1}_{\Omega_{N,n+1}} 
  \int_{t_{n}^N}^{t_{n+1}^N}
  u_1\!\left(
    t_{n+1}^N, \tilde{Y}_{s}^N
  \right)
  \sigma( Y_n^N ) \, dW_s
  \right]
  \right|
\\
  &\qquad\qquad\quad +
  \left|
  \mathbb{E}\left[ \mathbbm{1}_{\Omega_{N,n+1}} 
  \frac{1}{2}
  \int_{t_{n}^N}^{t_{n+1}^N} \!\!\!
  \int_{t_{n}^N}^{s}
  u_3\!\left(
    t_{n+1}^N, \tilde{Y}_{r}^N
  \right)
  \left( \sigma( Y_n^N ) \right)^3  dW_r \, ds
  \right]
  \right|
\end{align*}
for every $ n \in \{ 0, 1, \ldots, N-1 \} $ and every
$ N \in \mathbb{N} $.
Due to $ \Omega_{N,n+1} \subset 
\Omega_{N,n} $ we have
\begin{equation}\label{einser}
  \mathbbm{1}_{ \Omega_{N,n+1} }
  =
  \mathbbm{1}_{\left(
    \Omega_{N,n+1} \cap
    \Omega_{N,n}
  \right)}
  =
  \mathbbm{1}_{\Omega_{N,n+1}}
  \cdot 
  \mathbbm{1}_{\Omega_{N,n}} 
\end{equation}
and therefore
\begin{align*}
  &\bigg|
    \mathbb{E}\!\left[
      \mathbbm{1}_{ \Omega_{N,n+1} }
      u\!\left( 
        t_{n+1}^N, Y_{n+1}^{N} 
      \right)
    \right]
    -
    \mathbb{E}\!\left[
      \mathbbm{1}_{ \Omega_{N,n+1} }
      \!\left\{
        u\!\left(
          t_{n+1}^N,
          Y_n^N
        \right) +
        \frac{T}{N}
        \tilde{u}\!\left( 
          t_{n+1}^N, Y_n^N
        \right)
      \right\}
    \right]
  \bigg|
\\
  &\leq 
  3 C^9 N^{-2}
   +
  \left|
  \mathbb{E}\left[ \mathbbm{1}_{\Omega_{N,n+1}} 
  \int_{t_{n}^N}^{t_{n+1}^N} \!\!\!
  \int_{t_{n}^N}^{s}
  \mathbbm{1}_{ \Omega_{N,n} }
  u_2\!\left(
    t_{n+1}^N, \tilde{Y}_{r}^N
  \right)
  \sigma( Y_n^N ) \, \mu( Y_n^N ) \, dW_r \, ds
  \right]
  \right|
\\
  &\qquad\qquad\quad +
  \left|
  \mathbb{E}\left[ \mathbbm{1}_{\Omega_{N,n+1}}
  \int_{t_{n}^N}^{t_{n+1}^N}
  \mathbbm{1}_{\Omega_{N,n}}
  u_1\!\left(
    t_{n+1}^N, \tilde{Y}_{s}^N
  \right)
  \sigma( Y_n^N ) \, dW_s
  \right]
  \right|
\\
  &\qquad\qquad\quad +
  \left|
  \mathbb{E}\left[ \mathbbm{1}_{\Omega_{N,n+1}}
  \frac{1}{2}
  \int_{t_{n}^N}^{t_{n+1}^N} \!\!\!
  \int_{t_{n}^N}^{s}
  \mathbbm{1}_{\Omega_{N,n}}
  u_3\!\left(
    t_{n+1}^N, \tilde{Y}_{r}^N
  \right)
  \left( \sigma( Y_n^N ) \right)^3  dW_r \, ds
  \right]
  \right|
\end{align*}
and, using that the expectation of every involved stochastic integral
is equal to zero,
\begin{align*}
  &\bigg|
    \mathbb{E}\!\left[
      \mathbbm{1}_{ \Omega_{N,n+1} }
      u\!\left( 
        t_{n+1}^N, Y_{n+1}^{N} 
      \right)
    \right]
    -
    \mathbb{E}\!\left[
      \mathbbm{1}_{ \Omega_{N,n+1} }\!
      \left\{
        u\!\left(
          t_{n+1}^N,
          Y_n^N
        \right) +
        \frac{T}{N}
        \tilde{u}\!\left( 
          t_{n+1}^N, Y_n^N
        \right)
      \right\}
    \right]
  \bigg|
\\
  &\leq 
  3 C^9 N^{-2}
   +
  \int_{t_{n}^N}^{t_{n+1}^N} \left|
  \mathbb{E}\left[ 
    \mathbbm{1}_{\left(\Omega_{N,n+1}\right)^c} 
  \int_{t_{n}^N}^{s}
  \mathbbm{1}_{\Omega_{N,n}}
  u_2\!\left(
    t_{n+1}^N, \tilde{Y}_{r}^N
  \right)
  \sigma( Y_n^N ) \, \mu( Y_n^N ) \, dW_r 
  \right] \right| ds
\\
  &\qquad\qquad\quad +
  \left|
  \mathbb{E}\left[ 
  \mathbbm{1}_{\left(\Omega_{N,n+1}\right)^c} 
  \int_{t_{n}^N}^{t_{n+1}^N}
  \mathbbm{1}_{\Omega_{N,n}}
  u_1\!\left(
    t_{n+1}^N, \tilde{Y}_{s}^N
  \right)
  \sigma( Y_n^N ) \, dW_s
  \right]
  \right|
\\
  &\qquad\qquad\quad +
  \frac{1}{2}
  \int_{t_{n}^N}^{t_{n+1}^N} \left|
  \mathbb{E}\left[ 
  \mathbbm{1}_{\left(\Omega_{N,n+1}\right)^c} 
  \int_{t_{n}^N}^{s}
  \mathbbm{1}_{\Omega_{N,n}}
  u_3\!\left(
    t_{n+1}^N, \tilde{Y}_{r}^N
  \right)
  \left( \sigma( Y_n^N ) \right)^3  dW_r 
  \right] \right| ds
\end{align*}
for every $ n \in \{ 0, 1, \ldots, N-1 \} $ and every
$ N \in \mathbb{N} $.
This implies
\begin{align*}
  &\bigg|
    \mathbb{E}\!\left[
      \mathbbm{1}_{ \Omega_{N,n+1} }
      u\!\left( 
        t_{n+1}^N, Y_{n+1}^{N} 
      \right)
    \right]
    -
    \mathbb{E}\!\left[
      \mathbbm{1}_{ \Omega_{N,n+1} } 
      \! \left\{
        u\!\left(
          t_{n+1}^N,
          Y_k^N
        \right) +
        \frac{T}{N}
        \tilde{u}\left( 
          t_{n+1}^N, Y_n^N
        \right)
      \right\}
    \right]
  \bigg|
\\
  &\leq 
  3 C^9 N^{-2}
  +
  \left(
    \mathbb{P} \bigg[ \left( \Omega_{N,n+1} \right)^c \bigg]
  \right)^{\frac{1}{2}}
  \int_{t_{n}^N}^{t_{n+1}^N}
  \left\|
  \int_{t_{n}^N}^{s}
  \mathbbm{1}_{\Omega_{N,n}}
  u_2\!\left(
    t_{n+1}^N, \tilde{Y}_{r}^N
  \right)
  \sigma( Y_n^N ) \, \mu( Y_n^N ) \, dW_r 
  \right\|_{L^2} ds
\\
  & +
  \left(
    \mathbb{P} \bigg[ \left( \Omega_{N,n+1} \right)^c \bigg]
  \right)^{\frac{1}{2}} 
  \left\|
  \int_{t_{n}^N}^{t_{n+1}^N}
  \mathbbm{1}_{\Omega_{N,n}}
  u_1\!\left(
    t_{n+1}^N, \tilde{Y}_{s}^N
  \right)
  \sigma( Y_n^N ) \, dW_s
  \right\|_{L^2}
\\
  & + 
  \frac{1}{2}
  \left(
    \mathbb{P} \bigg[ \left( \Omega_{N,n+1} \right)^c \bigg]
  \right)^{\frac{1}{2}}
  \int_{t_{n}^N}^{t_{n+1}^N}
  \left\|
  \int_{t_{n}^N}^{s}
  \mathbbm{1}_{\Omega_{N,n}}
  u_3\!\left(
    t_{n+1}^N, \tilde{Y}_{r}^N
  \right)
  \left( \sigma( Y_n^N ) \right)^3  dW_r 
  \right\|_{L^2} ds
\end{align*}
and, using $(\Omega_{N,n+1})^c\subseteq(\Omega_N)^c$
and the It\^o isometry,
\begin{align*}
  &\left|
    \mathbb{E}\!\left[
      \mathbbm{1}_{ \Omega_{N,n+1} }
      u\!\left( 
        t_{n+1}^N, Y_{n+1}^{N} 
      \right)
    \right]
    -
    \mathbb{E}\!\left[
      \mathbbm{1}_{ \Omega_{N,n+1} }
      \! \left\{
        u\!\left(
          t_{n+1}^N,
          Y_n^N
        \right) +
        \frac{T}{N}
        \tilde{u}\left( 
          t_{n+1}^N, Y_n^N
        \right)
      \right\}
    \right]
  \right|
\\&\leq 
  3 C^9 N^{-2}
  +
  \left(
    \mathbb{P} \bigg[ \left( \Omega_{N} \right)^c \bigg]
  \right)^{\frac{1}{2}}
  \int_{t_{n}^N}^{t_{n+1}^N}
  \left(
    \int_{t_{n}^N}^{s}
    \left\|  
      \mathbbm{1}_{\Omega_{N,n}}
      u_2\!\left(
        t_{n+1}^N, \tilde{Y}_{r}^N
      \right)
      \sigma( Y_n^N ) \, \mu( Y_n^N ) 
    \right\|_{L^2}^2 dr
  \right)^{\frac{1}{2}} ds
\\
  & +
  \left(
    \mathbb{P} \bigg[ \left( \Omega_{N} \right)^c \bigg]
  \right)^{\frac{1}{2}} 
  \left(
    \int_{t_{n}^N}^{t_{n+1}^N}
    \left\|  
      \mathbbm{1}_{\Omega_{N,n}}
        u_1\!\left(
          t_{n+1}^N, \tilde{Y}_{s}^N
        \right)
      \sigma( Y_n^N )
    \right\|_{L^2}^2 ds
  \right)^{\frac{1}{2}}
\\
  & + 
  \frac{1}{2}
  \left(
    \mathbb{P} \bigg[ \left( \Omega_{N} \right)^c \bigg]
  \right)^{\frac{1}{2}}
  \int_{t_{n}^N}^{t_{n+1}^N}
  \left(
    \int_{t_{n}^N}^{s}
    \left\|
      \mathbbm{1}_{\Omega_{N,n}}
      u_3\!\left(
        t_{n+1}^N, \tilde{Y}_{r}^N
      \right)
      \left( \sigma( Y_n^N ) \right)^3 
    \right\|_{L^2}^2 dr
  \right)^{\frac{1}{2}} ds
\end{align*}
for every $ n \in \{ 0, 1, \ldots, N-1 \} $ and every
$ N \in \mathbb{N} $.
Hence, \eqref{c_definition} shows
\begin{align*}
  &\left|
    \mathbb{E}\!\left[
      \mathbbm{1}_{ \Omega_{N,n+1} }
      u\!\left( 
        t_{n+1}^N, Y_{n+1}^{N} 
      \right)
    \right]
    -
    \mathbb{E}\!\left[
      \mathbbm{1}_{ \Omega_{N,n+1} }
      \! \left\{
        u\!\left(
          t_{n+1}^N,
          Y_n^N
        \right) +
        \frac{T}{N}
        \tilde{u}\left( 
          t_{n+1}^N, Y_n^N
        \right)
      \right\}
    \right]
  \right|
\\&\leq 
  3 C^9 N^{-2}
  +
  C N^{-2}
\\
  &\cdot
  \int_{t_{n}^N}^{t_{n+1}^N} \!\!
  \left(
    \int_{t_{n}^N}^{s} \!
    \left\|  
      \mathbbm{1}_{\Omega_{N,n}}
      R \left( 1 + \left| \tilde{Y}_{r}^N \right|^R \right)
      L \left( 1 + \left| Y_n^N \right| \right)
      L \left( 1 + \left| Y_n^N \right|^{\delta} \right) 
    \right\|_{L^2}^2 \!\! dr
  \right)^{\frac{1}{2}} \!\! ds
\\
  & +
  C N^{-2} 
  \left(
    \int_{t_{n}^N}^{t_{n+1}^N}
    \left\|  
      \mathbbm{1}_{\Omega_{N,n}}
      R \left( 1 + \left| \tilde{Y}_{s}^N \right|^R \right)
      L \left( 1 + \left| Y_n^N \right| \right)
    \right\|_{L^2}^2 ds
  \right)^{\frac{1}{2}}
\\
  & + 
  \frac{1}{2}
  C N^{-2}
  \int_{t_{n}^N}^{t_{n+1}^N} \!\!
  \left(
    \int_{t_{n}^N}^{s} \!
    \left\|
      \mathbbm{1}_{\Omega_{N,n}}
      R \left( 1 + \left| \tilde{Y}_{r}^N \right|^R \right)
      4 L^3 \left( 1 + \left| Y_n^N \right|^3 \right) 
    \right\|_{L^2}^2 \!\! dr
  \right)^{\frac{1}{2}} \!\! ds
\end{align*}
and
\begin{align*}
  &\left|
    \mathbb{E}\!\left[
      \mathbbm{1}_{ \Omega_{N,n+1} }
      u\!\left( 
        t_{n+1}^N, Y_{n+1}^{N} 
      \right)
    \right]
    -
    \mathbb{E}\!\left[
      \mathbbm{1}_{ \Omega_{N,n+1} }
      \! \left\{
        u\!\left(
          t_{n+1}^N,
          Y_n^N
        \right) +
        \frac{T}{N}
        \tilde{u}\left( 
          t_{n+1}^N, Y_n^N
        \right)
      \right\}
    \right]
  \right|
\\&\leq 
  3 C^9 N^{-2}
\\&
  + R L^2 C N^{-2}
  \int_{t_{n}^N}^{t_{n+1}^N} \!\!
  \left(
    \int_{t_{n}^N}^{s} \!
    \left\|  
      \mathbbm{1}_{\Omega_{N,n}}
      \left( 2 + \left| \tilde{Y}_{r}^N \right|^R \right) \!
      \left( 2 + \left| Y_n^N \right|^R \right)^2
    \right\|_{L^2}^2 dr
  \right)^{\frac{1}{2}} ds
\\
  & +
  R L C N^{-2}
  \left(
    \int_{t_{n}^N}^{t_{n+1}^N} 
    \left\|  
      \mathbbm{1}_{\Omega_{N,n}}
      \left( 2 + \left| \tilde{Y}_{s}^N \right|^R \right)
      \left( 2 + \left| Y_n^N \right|^R \right)
    \right\|_{L^2}^2 ds
  \right)^{\frac{1}{2}}
\\
  & + 
  2
  R L^3 C N^{-2} 
  \int_{t_{n}^N}^{t_{n+1}^N} \!\!
  \left(
    \int_{t_{n}^N}^{s} \!
    \left\|
      \mathbbm{1}_{\Omega_{N,n}}
      \left( 2 + \left| \tilde{Y}_{r}^N \right|^R \right) \!
      \left( 2 + \left| Y_n^N \right|^R \right) 
    \right\|_{L^2}^2 dr
  \right)^{\frac{1}{2}} ds
\end{align*}
for every $ n \in \{ 0, 1, \ldots, N-1 \} $ and every
$ N \in \mathbb{N} $.
Therefore, we have
\begin{align*}
  &\left|
    \mathbb{E}\!\left[
      \mathbbm{1}_{ \Omega_{N,n+1} }
      u\!\left( 
        t_{n+1}^N, Y_{n+1}^{N} 
      \right)
    \right]
    -
    \mathbb{E}\!\left[
      \mathbbm{1}_{ \Omega_{N,n+1} }
      \! \left\{
        u\!\left(
          t_{n+1}^N,
          Y_n^N
        \right) +
        \frac{T}{N}
        \tilde{u}\left( 
          t_{n+1}^N, Y_n^N
        \right)
      \right\}
    \right]
  \right|
\\&\leq 
  3 C^9 N^{-2}
  + R L^2 C N^{-2}
  \int_{t_{n}^N}^{t_{n+1}^N}
  \left(
    \int_{t_{n}^N}^{s}
    \left(
      \sup_{ 0 \leq u \leq T }
      \left\|  
        \mathbbm{1}_{
          \Omega_{N,\lfloor \frac{u N}{T} \rfloor}
        }
        \left( 2 + \left| \tilde{Y}_{u}^N \right|^R \right)
      \right\|_{L^6}^2 
    \right) dr
  \right)^{\frac{1}{2}} ds
\\
  &\qquad\qquad\quad +
  R L C N^{-2}
  \left(
    \int_{t_{n}^N}^{t_{n+1}^N}
    \left(
      \sup_{ 0 \leq u \leq T }
      \left\|  
        \mathbbm{1}_{ 
          \Omega_{N,\lfloor \frac{u N}{T} \rfloor}
        }
        \left( 2 + \left| \tilde{Y}_{u}^N \right|^R \right)
      \right\|_{L^4}^2 
    \right) ds
  \right)^{\frac{1}{2}}
\\
  &\qquad\qquad\quad +
  2 R L^3 C N^{-2}
  \int_{t_{n}^N}^{t_{n+1}^N}
  \left(
    \int_{t_{n}^N}^{s}
    \left(
      \sup_{ 0 \leq u \leq T }
      \left\|
        \mathbbm{1}_{
          \Omega_{N,\lfloor \frac{u N}{T} \rfloor}
        }
        \left( 2 + \left| \tilde{Y}_{u}^N \right|^R \right)
      \right\|_{L^4}^2 
    \right) dr
  \right)^{\frac{1}{2}} ds
\\
&\leq
  3 C^9 N^{-2}
  +
  R L^2 C N^{-2}
  \int_{t_{n}^N}^{t_{n+1}^N}
  \sqrt{T} C
  \, ds
  +
  R L C N^{-2}
  \sqrt{T} C
  + 
  2 R L^3 C N^{-2}
  \int_{t_{n}^N}^{t_{n+1}^N}
  \sqrt{T} C \, ds
\end{align*}
for every $ n \in \{ 0, 1, \ldots, N-1 \} $ and every
$ N \in \mathbb{N} $.
This finally shows that
\begin{align}\label{numeric_expansion}
\nonumber
  &\left|
    \mathbb{E}\!\left[
      \mathbbm{1}_{ \Omega_{N,n+1} }
      u\!\left( 
        t_{n+1}^N, Y_{n+1}^{N} 
      \right)
    \right]
    -
    \mathbb{E}\!\left[
      \mathbbm{1}_{ \Omega_{N,n+1} }
      \! \left\{
        u\!\left(
          t_{n+1}^N,
          Y_n^N
        \right) +
        \frac{T}{N}
        \tilde{u}\left( 
          t_{n+1}^N, Y_n^N
        \right)
      \right\}
    \right]
  \right|
\\&\leq 
  3 C^9 N^{-2}
  +
  R L^2 C^2 \sqrt{T} T N^{-2}
  +
  R L C^2 \sqrt{T} N^{-2}
  + 
  2 R L^3 C^2 \sqrt{T} T N^{-2}
\\\nonumber
  &\leq 
  3 C^9 N^{-2}
  +
  C^7 N^{-2}
  +
  C^5 N^{-2}
  + 
  2 C^8 N^{-2}
  \leq 7 C^9 N^{-2}
\end{align}
for every $ n \in \{ 0, 1, \ldots, N-1 \} $ and every
$ N \in \mathbb{N} $.

Next we aim at a similar estimate as~\eqref{numeric_expansion}
with $Y_{n+1}^N$ replaced by 
$X_{ t_{n+1}^N }^{ t_n^N,Y_n^N }$
for $ n \in \left\{ 0,1, \dots, N-1 \right\}$
and $N \in \mathbb{N}$.
It\^o's formula implies
\begin{equation}  \begin{split}
  &u\!\left( 
    t_{n+1}^N, X_{t_{n+1}^N}^{t_{n}^N, Y_n^N} 
  \right) 
\\
  &=
  u\!\left( 
    t_{n+1}^N, Y_n^N
  \right)
  +
  \int_{t_{n}^N}^{t_{n+1}^N}
  u_1\!\left( 
    t_{n+1}^N, X_{s}^{t_{n}^N, Y_n^N}
  \right) 
  \mu\!\left(X_{s}^{t_{n}^N, Y_n^N}\right) ds
\\
  &\quad +
  \int_{t_{n}^N}^{t_{n+1}^N}
  u_1\!\left( 
    t_{n+1}^N, X_{s}^{t_{n}^N, Y_n^N}
  \right) 
  \sigma\!\left(X_{s}^{t_{n}^N, Y_n^N}\right) dW_s
\\
  &\quad + \frac{1}{2}
  \int_{t_{n}^N}^{t_{n+1}^N}
  u_2\!\left( 
    t_{n+1}^N, X_{s}^{t_{n}^N, Y_n^N}
  \right)
  \left( 
    \sigma\!\left(X_{s}^{t_{n}^N, Y_n^N}\right)
  \right)^2 ds
  \qquad \mathbb{P}\text{-a.s.}
\end{split}     \end{equation}
for every $ n \in \{0,1,\ldots,N-1\} $ and every
$ N \in \mathbb{N} $. This shows
\begin{equation*}  \begin{split}
  u\!\left( 
    t_{n+1}^N, X_{t_{n+1}^N}^{t_{n}^N, Y_n^N} 
  \right) 
&=
  u\!\left( 
    t_{n+1}^N, Y_n^N
  \right)
  +
  \int_{t_{n}^N}^{t_{n+1}^N}
  \tilde{u}\!\left( 
    t_{n+1}^N, X_{s}^{t_{n}^N, Y_n^N}
  \right) ds
\\ &\quad+
  \int_{t_{n}^N}^{t_{n+1}^N}
  u_1\!\left( 
    t_{n+1}^N, X_{s}^{t_{n}^N, Y_n^N}
  \right) 
  \sigma\!\left(X_{s}^{t_{n}^N, Y_n^N}\right) dW_s
  \quad \mathbb{P}\text{-a.s.}
\end{split}     \end{equation*}
and
\begin{equation}  \begin{split}
  &u\!\left( 
    t_{n+1}^N, X_{t_{n+1}^N}^{t_{n}^N, Y_n^N} 
  \right) 
\\
  &=
  u\!\left(
    t_{n+1}^N,
    Y_n^N
  \right) +
  \frac{T}{N}
  \tilde{u}\!\left( 
    t_{n+1}^N, Y_n^N
  \right)
  +
  \int_{t_{n}^N}^{t_{n+1}^N}
  \int_{t_{n}^N}^{s}
  \tilde{\tilde{u}}\!\left( 
    t_{n+1}^N, X_{r}^{t_{n}^N, Y_n^N}
  \right) dr \, ds
\\
  &\quad +
  \int_{t_{n}^N}^{t_{n+1}^N}
  \int_{t_{n}^N}^{s}
  \left(\frac{\partial}{\partial x}\tilde{u}\right)\!\left( 
    t_{n+1}^N, X_{r}^{t_{n}^N, Y_n^N}
  \right) 
  \sigma\!\left(
    X_r^{ t_{n}^N, Y_n^N}
  \right) dW_r \, ds  
\\
  &\quad +
  \int_{t_{n}^N}^{t_{n+1}^N}
  u_1\!\left( 
    t_{n+1}^N, X_{s}^{t_{n}^N, Y_n^N}
  \right) 
  \sigma\!\left(X_{s}^{t_{n}^N, Y_n^N}\right) dW_s
  \qquad \mathbb{P}\text{-a.s.}
\end{split}     \end{equation}
for every $ n \in \{0,1,\ldots,N-1\} $ and every
$ N \in \mathbb{N} $ by It{\^o}'s formula.
Hence, we obtain
\begin{align*}
  &\left|
    \mathbb{E}\!\left[
      \mathbbm{1}_{\Omega_{N,n+1}}
      u\!\left( \!
        t_{n+1}^N, X_{t_{n+1}^N}^{t_{n}^N, Y_n^N} \! 
      \right)
    \right]
    -
    \mathbb{E}\!\left[
      \mathbbm{1}_{\Omega_{N,n+1}} \! \left\{ \!
        u\!\left( \!
          t_{n+1}^N,
          Y_n^N \!
        \right) +
        \frac{T}{N}
        \tilde{u}\!\left(\!
          t_{n+1}^N, Y_n^N \!
        \right) \!
      \right\}
    \right] 
  \right|
\\
  &\leq
  \int_{t_{n}^N}^{t_{n+1}^N}
  \int_{t_{n}^N}^{s}
  \mathbb{E}\left[
    \mathbbm{1}_{\Omega_{N,n+1}} \left|
      \tilde{\tilde{u}}\!\left( 
        t_{n+1}^N, X_{r}^{t_{n}^N, Y_n^N}
      \right)
    \right|
  \right] dr \, ds
\\
  &+
  \int_{t_{n}^N}^{t_{n+1}^N}
  \left|
    \mathbb{E}\left[
      \mathbbm{1}_{\Omega_{N,n+1}}
      \int_{t_{n}^N}^{s}
      \left(\frac{\partial}{ \partial x}\tilde{u}\right)\!\left( 
        t_{n+1}^N, X_{r}^{t_{n}^N, Y_n^N}
      \right) 
      \sigma\!\left(
        X_r^{ t_{n}^N, Y_n^N}
      \right) dW_r 
  \right] \right| ds
\\ 
  &+
  \left|
    \mathbb{E}\left[
      \mathbbm{1}_{\Omega_{N,n+1}}
      \int_{t_{n}^N}^{t_{n+1}^N}
      u_1\!\left( 
        t_{n+1}^N, X_{s}^{t_{n}^N, Y_n^N}
      \right) 
      \sigma\!\left(X_{s}^{t_{n}^N, Y_n^N}\right) dW_s
    \right]
  \right|
\end{align*}
for every $ n \in \{0,1,\ldots,N-1\} $ and every
$ N \in \mathbb{N} $.
By \eqref{einser} we obtain
\begin{align*}
  &\left|
    \mathbb{E}\!\left[
      \mathbbm{1}_{\Omega_{N,n+1}} 
      u\!\left( \!
        t_{n+1}^N, X_{t_{n+1}^N}^{t_{n}^N, Y_n^N} \!
      \right)
    \right]
    -
    \mathbb{E}\!\left[
      \mathbbm{1}_{\Omega_{N,n+1}} \! \left\{ \!
        u\!\left( \!
          t_{n+1}^N,
          Y_n^N \!
        \right) +
        \frac{T}{N}
        \tilde{u}\!\left( \!
          t_{n+1}^N, Y_n^N \!
        \right) \!
      \right\}
    \right]
  \right|
\\
  &\leq
  \int_{t_{n}^N}^{t_{n+1}^N}
  \int_{t_{n}^N}^{s}
  \mathbb{E}\left[
    \mathbbm{1}_{\Omega_{N,n+1}}
    \,R\left(
      1
      +
      \left|
        X_r^{t_{n}^N, Y_n^N}
      \right|^R
    \right)
  \right] dr \, ds
\\
  &+
  \int_{t_{n}^N}^{t_{n+1}^N}
  \left|
    \mathbb{E}\left[
      \mathbbm{1}_{\Omega_{N,n+1}}
      \!\int_{t_{n}^N}^{s}
      \mathbbm{1}_{\Omega_{N,n}}
      \left(\frac{\partial}{ \partial x}\tilde{u}\right)\!\!\left( 
        t_{n+1}^N, X_{r}^{t_{n}^N, Y_n^N}
      \right) 
      \sigma\!\left( \!
        X_r^{ t_{n}^N, Y_n^N}
      \right) \! dW_r
    \right]
  \right| ds
\\ 
  &+
  \left|
    \mathbb{E}\left[
      \mathbbm{1}_{\Omega_{N,n+1}}
      \int_{t_{n}^N}^{t_{n+1}^N}
      \mathbbm{1}_{\Omega_{N,n}}
      u_1\!\left( 
        t_{n+1}^N, X_{s}^{t_{n}^N, Y_n^N}
      \right) 
      \sigma\!\left(\!X_{s}^{t_{n}^N, Y_n^N}\right) dW_s
    \right]
  \right|
\end{align*}
and
\begin{align*}
  &\left|
    \mathbb{E}\!\left[
      \mathbbm{1}_{\Omega_{N,n+1}}
      u\!\left( \!
        t_{n+1}^N, X_{t_{n+1}^N}^{t_{n}^N, Y_n^N} \! 
      \right)
    \right]
    -
    \mathbb{E}\!\left[
      \mathbbm{1}_{\Omega_{N,n+1}} \! \left\{ \!
        u\!\left( \!
          t_{n+1}^N,
          Y_n^N \!
        \right) +
        \frac{T}{N}
        \tilde{u}\!\left(\!
          t_{n+1}^N, Y_n^N \!
        \right) \!
      \right\}
    \right] 
  \right|
\\&\leq
  R C \frac{1}{2} \left( 
    \frac{T}{N}
  \right)^2
\\
  &+ \!\!
  \int_{t_{n}^N}^{t_{n+1}^N} \!
  \left|
  \mathbb{E}\left[
    \mathbbm{1}_{\left(\Omega_{N,n+1}\right)^c} \!\!
      \int_{t_{n}^N}^{s} \!
      \mathbbm{1}_{\Omega_{N,n}} \!
      \left(\frac{\partial}{ \partial x}\tilde{u}\right) \! \left( 
        \! t_{n+1}^N, X_{r}^{t_{n}^N, Y_n^N} \!
      \right) 
      \sigma\!\left( 
        \! X_r^{ t_{n}^N, Y_n^N} \!
      \right) dW_r
  \right] \right| ds
\\ 
  &+
  \left|
  \mathbb{E}\left[
    \mathbbm{1}_{\left(\Omega_{N,n+1}\right)^c}
      \int_{t_{n}^N}^{t_{n+1}^N}
      \mathbbm{1}_{\Omega_{N,n}}
      u_1\!\left( 
        t_{n+1}^N, X_{s}^{t_{n}^N, Y_n^N}
      \right) 
      \sigma\!\left(\!X_{s}^{t_{n}^N, Y_n^N}\right) dW_s
  \right] \right|
\end{align*}
and
\begin{align*}
  \nonumber
  &\left|
    \mathbb{E}\!\left[
      \mathbbm{1}_{\Omega_{N,n+1}}
      u\!\left( \!
        t_{n+1}^N, X_{t_{n+1}^N}^{t_{n}^N, Y_n^N} \! 
      \right)
    \right]
    -
    \mathbb{E}\!\left[
      \mathbbm{1}_{\Omega_{N,n+1}} \! \left\{ \!
        u\!\left( \!
          t_{n+1}^N,
          Y_n^N \!
        \right) +
        \frac{T}{N}
        \tilde{u}\!\left(\!
          t_{n+1}^N, Y_n^N \!
        \right) \!
      \right\}
    \right] 
  \right|
\\ \nonumber
  &\leq
  \frac{1}{2} R C T^2 N^{-2}
\\ \nonumber
  &+
  \left(
    \mathbb{P} \left[ \left( \Omega_N \right)^c \right]
  \right)^{\frac{1}{2}} \!\!
  \int_{t_{n}^N}^{t_{n+1}^N} \!\!
    \left\|
      \int_{t_{n}^N}^{s} \!
      \mathbbm{1}_{\Omega_{N,n}} \!
      \left(\frac{\partial}{ \partial x}\tilde{u}\right) \!\left( 
        \! t_{n+1}^N, X_{r}^{t_{n}^N, Y_n^N} \!
      \right) 
      \sigma\!\left(
        \! X_r^{ t_{n}^N, Y_n^N} \!
      \right) dW_r
    \right\|_{L^2} \!\!\!\! ds
\\ \nonumber
  &+
  \left(
    \mathbb{P} \left[ \left( \Omega_N \right)^c \right]
  \right)^{\frac{1}{2}}
  \left\|
    \int_{t_{n}^N}^{t_{n+1}^N}
    \mathbbm{1}_{\Omega_{N,n}}
    u_1\!\left( 
      t_{n+1}^N, X_{s}^{t_{n}^N, Y_n^N}
    \right) 
    \sigma\!\left(\!X_{s}^{t_{n}^N, Y_n^N}\!\right) dW_s
  \right\|_{L^2}
\end{align*}
for every $ n \in \{0,1,\ldots,N-1\} $ and every
$ N \in \mathbb{N} $.
Therefore, we have
\begin{align*}
  \nonumber
  &\left|
    \mathbb{E}\!\left[
      \mathbbm{1}_{\Omega_{N,n+1}}
      u\!\left( \!
        t_{n+1}^N, X_{t_{n+1}^N}^{t_{n}^N, Y_n^N} \! 
      \right)
    \right]
    -
    \mathbb{E}\!\left[
      \mathbbm{1}_{\Omega_{N,n+1}} \! \left\{ \!
        u\!\left( \!
          t_{n+1}^N,
          Y_n^N \!
        \right) +
        \frac{T}{N}
        \tilde{u}\!\left(\!
          t_{n+1}^N, Y_n^N \!
        \right) \!
      \right\}
    \right] 
  \right|
\\&\leq
\nonumber
  \left(
    \mathbb{P} \left[ \left( \Omega_N \right)^c \right]
  \right)^{\frac{1}{2}} \!\!
  \int_{t_{n}^N}^{t_{n+1}^N} \!\!
  \left(
    \int_{t_{n}^N}^{s} \!
    \left\|
      \mathbbm{1}_{\Omega_{N,n}} \!
      \left(\frac{\partial}{ \partial x}\tilde{u}\right) \!\left( 
        \! t_{n+1}^N, X_{r}^{t_{n}^N, Y_n^N} \!
      \right) 
      \sigma\!\left(
        \! X_r^{ t_{n}^N, Y_n^N} \!
      \right) 
     \right\|_{L^2}^2 \!\!\!\! dr
   \right)^{\frac{1}{2}} \!\! ds
\\ \nonumber
  &+
  \left(
    \mathbb{P} \left[ \left( \Omega_N \right)^c \right]
  \right)^{\frac{1}{2}}
  \left(
    \int_{t_{n}^N}^{t_{n+1}^N}
    \left\|
      \mathbbm{1}_{\Omega_{N,n}}
      u_1\!\left( 
        t_{n+1}^N, X_{s}^{t_{n}^N, Y_n^N}
      \right) 
      \sigma\!\left(\!X_{s}^{t_{n}^N, Y_n^N}\!\right)
    \right\|_{L^2}^2 ds
  \right)^{\frac{1}{2}}
  + \frac{ C^4 }{ N^2 }
\end{align*}
and
\begin{align*}
  &\left|
    \mathbb{E}\!\left[
      \mathbbm{1}_{\Omega_{N,n+1}}
      u\!\left( \!
        t_{n+1}^N, X_{t_{n+1}^N}^{t_{n}^N, Y_n^N} \! 
      \right)
    \right]
    -
    \mathbb{E}\!\left[
      \mathbbm{1}_{\Omega_{N,n+1}} \! \left\{ \!
        u\!\left( \!
          t_{n+1}^N,
          Y_n^N \!
        \right) +
        \frac{T}{N}
        \tilde{u}\!\left(\!
          t_{n+1}^N, Y_n^N \!
        \right) \!
      \right\}
    \right] 
  \right|
\\&\leq
  \frac{ C^4 }{ N^2 } +
  \left(
    \mathbb{P} \left[ \left( \Omega_N \right)^c \right]
  \right)^{\frac{1}{2}}
\\ \nonumber
  &\cdot
  \int_{t_{n}^N}^{t_{n+1}^N} \!\!
  \left(
    \int_{t_{n}^N}^{s} \!
    \left\|
      \mathbbm{1}_{\Omega_{N,n}} \!
      R\left( 
        1 + \left|X_{r}^{t_{n}^N, Y_n^N}\right|^R
      \right) 
      L\left(
        1 +  \left|X_r^{ t_{n}^N, Y_n^N}\right|
      \right) 
     \right\|_{L^2}^2 \!\!\!\! dr
   \right)^{\frac{1}{2}} ds
\\ \nonumber
  &+
  \left(
    \mathbb{P} \left[ \left( \Omega_N \right)^c \right]
  \right)^{\frac{1}{2}}
  \left(
    \int_{t_{n}^N}^{t_{n+1}^N} \!\!
    \left\|
      \mathbbm{1}_{\Omega_{N,n}}
      R\left( 
        1 + \left|X_{s}^{t_{n}^N, Y_n^N}\right|^R
      \right) 
      L\left(
        1 +  \left|X_s^{ t_{n}^N, Y_n^N}\right|
      \right) 
    \right\|_{L^2}^2 \!\! ds
  \right)^{\frac{1}{2}}
\end{align*}
and
\begin{align*}
  \nonumber
  &\left|
    \mathbb{E}\!\left[
      \mathbbm{1}_{\Omega_{N,n+1}}
      u\!\left( \!
        t_{n+1}^N, X_{t_{n+1}^N}^{t_{n}^N, Y_n^N} \! 
      \right)
    \right]
    -
    \mathbb{E}\!\left[
      \mathbbm{1}_{\Omega_{N,n+1}} \! \left\{ \!
        u\!\left( \!
          t_{n+1}^N,
          Y_n^N \!
        \right) +
        \frac{T}{N}
        \tilde{u}\!\left(\!
          t_{n+1}^N, Y_n^N \!
        \right) \!
      \right\}
    \right] 
  \right|
\\&\leq
  \frac{ C^4 }{ N^2 }
  +
  \left(
    \mathbb{P} \left[ \left( \Omega_N \right)^c \right]
  \right)^{\frac{1}{2}}
  L R
  \int_{t_{n}^N}^{t_{n+1}^N} \!\!
  \left(
    \int_{t_{n}^N}^{s} \!
    \left\|
      \mathbbm{1}_{\Omega_{N,n}} \!
      \left( 
        2 + \left|X_{r}^{t_{n}^N, Y_n^N}\right|^R
      \right)^2 
     \right\|_{L^2}^2 \!\!\!\! dr
   \right)^{\frac{1}{2}} ds
\\ \nonumber
  &+
  \left(
    \mathbb{P} \left[ \left( \Omega_N \right)^c \right]
  \right)^{\frac{1}{2}}
  L R
  \left(
    \int_{t_{n}^N}^{t_{n+1}^N}
    \left\|
      \mathbbm{1}_{\Omega_{N,n}}
      \left( 
        2 + \left|X_{s}^{t_{n}^N, Y_n^N}\right|^R
      \right)^2 
    \right\|_{L^2}^2 ds
  \right)^{\frac{1}{2}}
\end{align*}
for every $ n \in \{0,1,\ldots,N-1\} $ and every
$ N \in \mathbb{N} $.
Hence, we finally obtain
\begin{align*}
  \nonumber
  &\left|
    \mathbb{E}\!\left[
      \mathbbm{1}_{\Omega_{N,n+1}}
      u\!\left( \!
        t_{n+1}^N, X_{t_{n+1}^N}^{t_{n}^N, Y_n^N} \! 
      \right)
    \right]
    -
    \mathbb{E}\!\left[
      \mathbbm{1}_{\Omega_{N,n+1}} \! \left\{ \!
        u\!\left( \!
          t_{n+1}^N,
          Y_n^N \!
        \right) +
        \frac{T}{N}
        \tilde{u}\!\left(\!
          t_{n+1}^N, Y_n^N \!
        \right) \!
      \right\}
    \right] 
  \right|
\\&\leq
  \frac{ C^4 }{ N^2 }
  +
  \left(
    \mathbb{P} \left[ \left( \Omega_N \right)^c \right]
  \right)^{\frac{1}{2}}
  L R
  \int_{t_{n}^N}^{t_{n+1}^N} \!\!
  \left(
    \int_{t_{n}^N}^{s} \!
    \left\|
      \mathbbm{1}_{\Omega_{N,n}} \!
      \left( 
        2 + \left|X_{r}^{t_{n}^N, Y_n^N}\right|^R
      \right)
     \right\|_{L^4}^4 \!\!\!\! dr
   \right)^{\frac{1}{2}} ds
\\ \nonumber
  &+
  \left(
    \mathbb{P} \left[ \left( \Omega_N \right)^c \right]
  \right)^{\frac{1}{2}}
  L R
  \left(
    \int_{t_{n}^N}^{t_{n+1}^N}
    \left\|
      \mathbbm{1}_{\Omega_{N,n}}
      \left( 
        2 + \left|X_{s}^{t_{n}^N, Y_n^N}\right|^R
      \right)
    \right\|_{L^4}^4 ds
  \right)^{\frac{1}{2}}
\end{align*}
and
\begin{align*}
  \nonumber
  &\left|
    \mathbb{E}\!\left[
      \mathbbm{1}_{\Omega_{N,n+1}}
      u\!\left( \!
        t_{n+1}^N, X_{t_{n+1}^N}^{t_{n}^N, Y_n^N} \! 
      \right)
    \right]
    -
    \mathbb{E}\!\left[
      \mathbbm{1}_{\Omega_{N,n+1}} \! \left\{ \!
        u\!\left( \!
          t_{n+1}^N,
          Y_n^N \!
        \right) +
        \frac{T}{N}
        \tilde{u}\!\left(\!
          t_{n+1}^N, Y_n^N \!
        \right) \!
      \right\}
    \right] 
  \right|
\\&\leq
  C^4 N^{-2}
  +
  C N^{-2}
  L R
  \int_{t_{n}^N}^{t_{n+1}^N} \!\!
  \left(  
    \int_{t_{n}^N}^{s} 
    C^4 \, dr
   \right)^{\frac{1}{2}} ds
  +
  C N^{-2}
  L R
  \left(
    \int_{t_{n}^N}^{t_{n+1}^N}
    C^4 \, ds
  \right)^{\frac{1}{2}}
\\&\leq 
  C^4 N^{-2}
  +
  C N^{-2}
  L R
  \int_{t_{n}^N}^{t_{n+1}^N} \!\!
  \sqrt{T} C^2 ds
  +
  L R C^3 \sqrt{T} N^{-2}
\end{align*}
and hence
\begin{align}\label{exact_exp}
  \nonumber
  &\left|
    \mathbb{E}\!\left[
      \mathbbm{1}_{\Omega_{N,n+1}}
      u\!\left( \!
        t_{n+1}^N, X_{t_{n+1}^N}^{t_{n}^N, Y_n^N} \! 
      \right)
    \right]
    -
    \mathbb{E}\!\left[
      \mathbbm{1}_{\Omega_{N,n+1}} \! \left\{ \!
        u\!\left( \!
          t_{n+1}^N,
          Y_n^N \!
        \right) +
        \frac{T}{N}
        \tilde{u}\!\left(\!
          t_{n+1}^N, Y_n^N \!
        \right) \!
      \right\}
    \right] 
  \right|
\\&\leq 
\nonumber
  C^4 N^{-2}
  +
  L R C^3 T \sqrt{T} N^{-2}
  +
  L R C^3 \sqrt{T} N^{-2}
\\&\leq 
  C^4 N^{-2}
  +
  C^7 N^{-2}
  +
  C^6 N^{-2}
  \leq
  3 C^7 N^{-2}
\end{align}
for every $ n \in \{0,1,\ldots,N-1\} $ and every
$ N \in \mathbb{N} $.
Combining \eqref{numeric_expansion} and \eqref{exact_exp} yields
\begin{equation}\label{e_estimates}
  \left|
  \mathbb{E}\left[
    \mathbbm{1}_{\Omega_{N,n+1}}
    u\!\left( 
      t_{n+1}^N, Y_{n+1}^{N} 
    \right)
  \right]
  -
  \mathbb{E}\left[
    \mathbbm{1}_{\Omega_{N,n+1}}
    u\!\left( 
      t_{n+1}^N, X_{t_{n+1}^N}^{t_{n}^N, Y_n^N} 
    \right)
  \right]
  \right|
  \leq 
  10 C^9 N^{-2} 
\end{equation}
for every $ n \in \{0,1,\ldots,N-1\} $ and every
$ N \in \mathbb{N} $.
Our interpretation of~\eqref{e_estimates} is that
the difference between the Euler approximation
and the exact solution after a time of order $O(\tfrac{1}{N})$
is in a weak sense of order $O(\tfrac{1}{N^2})$.
Now we split up the interval $[0,T]$ into 
$N \in \mathbb{N}$ subintervals
and sum up all differences which arise in the subintervals.
Rewriting the weak difference between the exact solution
and the Euler approximation by a telescope sum
yields
\begin{equation}  \begin{split}\label{fteleskop}
&
  \mathbb{E} 
    \bigg[ \mathbbm{1}_{\Omega _N } 
      \cdot f\left(
        X_T
      \right)
    \bigg] -
  \mathbb{E} 
    \bigg[ \mathbbm{1}_{\Omega _N } 
      \cdot f\left(
        Y^N_N
      \right)
    \bigg]
\\& = 
  \mathbb{E} 
    \left[ \mathbbm{1}_{\Omega _N } 
      \cdot f\left(
        X^{0, Y^N_0}_T
      \right)
    \right] -
  \mathbb{E} 
    \left[ \mathbbm{1}_{\Omega _N } 
      \cdot f\left(
        X^{T, Y^N_N}_T
      \right)
    \right]
\\&= 
  \mathbb{E} 
    \left[ \mathbbm{1}_{\Omega _N } 
      \cdot f\left(
        X^{t^N_0, Y^N_0}_T
      \right)
    \right] -
  \mathbb{E} 
    \left[ \mathbbm{1}_{\Omega _N } 
      \cdot f\left(
        X^{t_{1}^N, Y^N_1}_T
      \right)
    \right]
\\& \quad + 
  \mathbb{E} 
    \left[ \mathbbm{1}_{\Omega _N } 
      \cdot f\left(
        X^{t_{1}^N, Y^N_1}_T
      \right)
    \right] -
  \mathbb{E} 
    \left[ \mathbbm{1}_{\Omega _N } 
      \cdot f\left(
        X^{t^N_N, Y^N_N}_T
      \right)
    \right]
\end{split}     \end{equation}
and hence
\begin{equation}  \begin{split}
\lefteqn{
  \mathbb{E} 
    \bigg[ \mathbbm{1}_{\Omega _N } 
      \cdot f\left(
        X_T
      \right)
    \bigg] -
  \mathbb{E} 
    \bigg[ \mathbbm{1}_{\Omega _N } 
      \cdot f\left(
        Y^N_N
      \right)
    \bigg]
}
\\
&=
  \sum\limits_{n \, = \, 0}^{N - 1}
   \left( 
     \mathbb{E} 
     \left[ \mathbbm{1}_{\Omega _N } 
      \cdot f\left(
        X^{t_{n}^N, Y^N_n}_T
      \right)
    \right] -
  \mathbb{E} 
    \left[ \mathbbm{1}_{\Omega _N } 
      \cdot 
      f \left(
        X^{t_{n+1}^N, Y^N_{n + 1}}_T
      \right)
    \right]
  \right)
\end{split}     \end{equation}
for every $N$ $\in$ $\mathbb N$.
Since $ \Omega_N \subset \Omega_{N,n+1} $,
we have
\begin{equation}
  \mathbbm{1}_{\Omega_N}
  =
  \mathbbm{1}_{\Omega_{N,n+1}}
  - 
  \mathbbm{1}_{\left(\Omega_{N,n+1} \backslash \Omega_{N}\right)} 
\end{equation}
for every $ n \in \{0,1,\ldots,N-1\} $
and every $ N \in \mathbb{N} $
and therefore
\begin{equation}  \begin{split}
&
  \mathbb{E} 
  \bigg[ \mathbbm{1}_{\Omega _N } 
    \cdot f\left(
      X_T
    \right)
  \bigg] -
  \mathbb{E} 
  \bigg[ \mathbbm{1}_{\Omega _N }
    \cdot f\left(
      Y^N_N
    \right)
  \bigg]
\\&= 
  \sum\limits_{n \, = \, 0}^{N - 1}
     \mathbb{E} 
     \left[ \mathbbm{1}_{\Omega _N }
      \left\{ f
        \left(
          X^{t_{n}^N, Y^N_n}_T
        \right)
        - f
      \left(
        X^{t_{n+1}^N, Y^N_{n + 1}}_T
      \right)
    \right\}
  \right] 
\\&= 
  \sum\limits_{n \, = \, 0}^{N - 1}
     \mathbb{E} 
     \left[ \mathbbm{1}_{\Omega_{N,n+1}}
      \left\{ f
        \left(
          X^{t_{n}^N, Y^N_n}_T
        \right)
        - f
      \left(
        X^{t_{n+1}^N, Y^N_{n + 1}}_T
      \right)
    \right\}
  \right] 
\\&\quad -
  \sum\limits_{n \, = \, 0}^{N - 1}
     \mathbb{E} 
     \left[ 
       \mathbbm{1}_{\left(\Omega_{N,n+1} \backslash \Omega_{N}\right)}   
       \left\{ f
        \left(
          X^{t_{n}^N, Y^N_n}_T
        \right)
        - f
      \left(
        X^{t_{n+1}^N, Y^N_{n + 1}}_T
      \right)
    \right\}
  \right]
\end{split}     \end{equation}
and hence
\begin{equation}  \begin{split}
&
  \left| 
    \mathbb{E} 
    \bigg[ \mathbbm{1}_{\Omega _N } 
      \cdot f\left(
        X_T
      \right)
    \bigg] -
    \mathbb{E} 
    \bigg[ \mathbbm{1}_{\Omega _N }
      \cdot f\left(
        Y^N_N
      \right)
    \bigg]
  \right|
\\&\leq 
  \sum\limits_{n \, = \, 0}^{N - 1}
    \left|
    \mathbb{E} 
    \left[ 
      \mathbbm{1}_{\Omega_{N,n+1}}
      \left\{ 
        f\left(
          X^{t_{n}^N, Y^N_n}_T
        \right)
        - f\left(
        X^{t_{n+1}^N, Y^N_{n + 1}}_T
        \right)
      \right\}
    \right]
    \right| 
\\&\quad+
  \sum\limits_{n \, = \, 0}^{N - 1}
    \mathbb{E}\left[ 
      \mathbbm{1}_{\Omega_{N,n+1}}
      \mathbbm{1}_{ \left(\Omega _N \right)^c }
      \left|
        f\left(
         X^{t_{n}^N, Y^N_n}_T
        \right)
        - f
        \left(
          X^{t_{n+1}^N, Y^N_{n + 1}}_T
        \right)
      \right|
    \right]
\end{split}     \end{equation}
and
\begin{equation}  \begin{split}
&
  \left| 
    \mathbb{E} 
    \bigg[ \mathbbm{1}_{\Omega _N } 
      \cdot f\left(
        X_T
      \right)
    \bigg] -
    \mathbb{E} 
    \bigg[ \mathbbm{1}_{\Omega _N }
      \cdot f\left(
        Y^N_N
      \right)
    \bigg]
  \right|
\\&\leq 
  \sum\limits_{n \, = \, 0}^{N - 1}
    \left|
    \mathbb{E} 
    \left[ 
      \mathbbm{1}_{\Omega_{N,n+1}}
        \mathbb{E}
        \left[ 
          f\left(
            X^{t_{n}^N, Y^N_n}_T
          \right)
          - 
          f\left(
            X^{t_{n+1}^N, Y^N_{n + 1}}_T
          \right)
          \bigg| \;
          \mathcal{F}_{t_{n+1}^N}
        \right]
      \right]
    \right| 
\\&\quad+
  \sum\limits_{n \, = \, 0}^{N - 1}
  \left(
    \mathbb{P}\bigg[\left( \Omega_N \right)^c \bigg]
  \right)^{\frac{1}{2}}
  \left\|
      \mathbbm{1}_{\Omega_{N,n+1}}
      \left|
        f\left(
         X^{t_{n}^N, Y^N_n}_T
        \right)
        -
        f
        \left(
          X^{t_{n+1}^N, Y^N_{n + 1}}_T
        \right)
      \right|
  \right\|_{L^2}
\end{split}     \end{equation}
for every $N \in \mathbb N$.
Therefore, we obtain
\begin{align*}
&
  \left| 
    \mathbb{E} 
    \bigg[ \mathbbm{1}_{\Omega _N } 
      \cdot f\left(
        X_T
      \right)
    \bigg] -
    \mathbb{E} 
    \bigg[ \mathbbm{1}_{\Omega _N }
      \cdot f\left(
        Y^N_N
      \right)
    \bigg]
  \right|
\\&\leq 
  \sum\limits_{n \, = \, 0}^{N - 1}
    \left|
    \mathbb{E} 
    \left[ 
      \mathbbm{1}_{\Omega_{N,n+1}}
        \mathbb{E}
        \left[ 
          f\left(
            X^{t_{n+1}^N, X^{t_{n}^N, Y^N_n}_{t_{n+1}^N}}_T
          \right)
          - 
          f\left(
            X^{t_{n+1}^N, Y^N_{n + 1}}_T
          \right)
          \bigg| \;
          \mathcal{F}_{t_{n+1}^N}
        \right]
      \right]
    \right|
\\&+
  \sum\limits_{n \, = \, 0}^{N - 1}
  C N^{-2}
  \left(
    \left\| 
      \mathbbm{1}_{\Omega_{N,n+1}}
      \cdot f\left(
        X^{t_{n}^N, Y^N_n}_T
      \right)
    \right\|_{L^2}
    +
    \left\|
      \mathbbm{1}_{\Omega_{N,n+1}}
      \cdot f\left(
        X^{t_{n+1}^N, Y^N_{n + 1}}_T
      \right)
    \right\|_{L^2}
  \right)
\end{align*}
and, using the Markov property and inequality~\eqref{e_estimates},
\begin{align*}
&
  \left| 
    \mathbb{E} 
    \bigg[ \mathbbm{1}_{\Omega _N } 
      \cdot f\left(
        X_T
      \right)
    \bigg] -
    \mathbb{E} 
    \bigg[ \mathbbm{1}_{\Omega _N }
      \cdot f\left(
        Y^N_N
      \right)
    \bigg]
  \right|
\\&\leq
  \sum\limits_{n \, = \, 0}^{N - 1}
    \left|
    \mathbb{E} 
    \left[ 
      \mathbbm{1}_{\Omega_{N,n+1}}
        \left( 
          u\left(
            t_{n+1}^N, X^{t_{n}^N, Y^N_n}_{t_{n+1}^N}
          \right)
          - 
          u\left(
            t_{n+1}^N, Y^N_{n + 1}
          \right)
        \right)
      \right]
    \right|
\\&+
  \sum\limits_{n \, = \, 0}^{N - 1}
  C N^{-2}
  \left(
    \left\| 
      \mathbbm{1}_{\Omega_{N,n+1}}
      \cdot f\left(
        X^{t_{n}^N, Y^N_n}_T
      \right)
    \right\|_{L^2}
    +
    \left\|
      \mathbbm{1}_{\Omega_{N,n+1}}
      \cdot f\left(
        X^{t_{n+1}^N, Y^N_{n + 1}}_T
      \right)
    \right\|_{L^2}
  \right)
\\&\leq
  \sum\limits_{n \, = \, 0}^{N - 1}
  10 C^9 N^{-2}
  +
  \sum\limits_{n \, = \, 0}^{N - 1}
  C N^{-2}
  \Bigg(
    \left\| 
      \mathbbm{1}_{\Omega_{N,n+1}}
      \, L\left(
        1 + \left|X^{t_{n}^N, Y^N_n}_T\right|^{\delta}
      \right)
    \right\|_{L^2}
\\&+
    \left\|
      \mathbbm{1}_{\Omega_{N,n+1}}
      \, L\left(
        1 + \left|X^{t_{n+1}^N, Y^N_{n + 1}}_T\right|^{\delta}
      \right)
    \right\|_{L^2}
  \Bigg)
\end{align*}
for every $N \in \mathbb N$.
Finally, 
using $\Omega_{N,n+1}\subseteq\Omega_{N,n}$
for all $ n \in \left\{ 0,1, \dots, N-1\right\}$,
$ N \in \mathbb{N}$
and $|x|^\delta \leq 1+|x|^R$ for all $x\in\mathbb{R}$,
we arrive at
\begin{align*}
&
  \left| 
    \mathbb{E} 
    \bigg[ \mathbbm{1}_{\Omega _N } 
      \cdot f\left(
        X_T
      \right)
    \bigg] -
    \mathbb{E} 
    \bigg[ \mathbbm{1}_{\Omega _N }
      \cdot f\left(
        Y^N_N
      \right)
    \bigg]
  \right|
  \leq
  10 C^9 N^{-1}
  +
  L C N^{-2}
\\&\cdot
  \left\{
    \sum\limits_{n \, = \, 0}^{N - 1}
      \left\| 
        \mathbbm{1}_{\Omega_{N,n}}
        \left( 2 + 
        \left|X^{t_{n}^N, Y^N_n}_T\right|^{R}
        \right)
      \right\|_{L^2} \!\!
      +
      \left\| 
        \mathbbm{1}_{\Omega_{N,n+1}}
        \left( 2 + 
        \left|X^{t_{n+1}^N, Y^N_{n+1}}_T\right|^{R}
        \right)
      \right\|_{L^2}
  \right\}
\\&\leq
  10 C^9 N^{-1}
  + 2
  L C^2 N^{-1}
  \leq
  12
  C^9 N^{-1}
\end{align*}
for every $N \in \mathbb N$ due to \eqref{c_definition}.
This proves the assertion
of Lemma~\ref{lemm_ew}.
\end{proof}

\section*{Acknowledgement}
This work has been partially supported
by the Collaborative 
Research Centre~701 ``Spectral 
Structures and Topological Methods
in Mathematics'' 
and
by the research project
``Numerical solutions of stochastic
differential equations with
non-globally Lipschitz continuous
coefficients'' 
both funded 
by the German Research
Foundation. Additionally, the authors 
thank three anonymous referees 
for their very helpful comments.
%
%
%
%
%
%
%

\end{document}